\documentclass[12pt]{article}
\usepackage{pifont}
\usepackage{amsmath}
\usepackage{amssymb,float}
\usepackage{amsthm,graphicx,pifont}
\usepackage{graphicx,psfrag,float,subfigure}
\usepackage{caption,enumerate}
\usepackage[scale=0.8,a4paper]{geometry}
\usepackage[colorlinks]{hyperref}
\usepackage{amsthm,amsmath,amssymb}
\usepackage{mathrsfs}
\newtheorem{theorem}{Theorem}[section]

\newtheorem{lemma}[theorem]{Lemma}
\newtheorem{corollary}[theorem]{Corollary}

\title{On graphs with $1$-matching and $2$-matching edges}
\author{Yixuan Gao, Xiumei Wang, {Jinfeng Liu\thanks{Corresponding author: Jinfeng Liu. e-mail: ljf@zzu.edu.cn.}}\\
{\small  School of Mathematics and Statistics, Zhengzhou University}\\
{\small Zhengzhou 450001, China}}
\date{}\makeatother
\begin{document}
\maketitle

\begin{abstract}
Let \(G\) be a graph admitting a perfect matching. An edge is called a {\it \(k\)-matching edge} if it belongs to exactly \(k\) perfect matchings, and a {\it \(k^{+}\)-matching edge} if it belongs to at least \(k\) perfect matchings. Thus, {\it an admissible edge} is a \(1^{+}\)-matching edge, and a connected graph is {\it matching covered} if every edge is admissible. We call a connected graph {\it \(k\)-matching covered} if every edge is a \(k\)-matching edge; in particular, a \(2\)-matching covered graph is called {\it matching double covered}.

Motivated by matching-covered graph theory and the Berge--Fulkerson conjecture (1970s), we introduce the class \(\mathfrak{B}\) of connected graphs in which every edge is either a \(1\)-matching edge or a \(2\)-matching edge, and no perfect matching contains edges of both types. In particular, every matching double covered graph belongs to \(\mathfrak{B}\). Using ear decompositions and tight-cut decompositions, we establish a complete structural characterization of graphs in \(\mathfrak{B}\). These characterizations reveal how restrictions on the number of perfect matchings containing each edge determine the global structure of the corresponding matching-covered graphs.
\end{abstract}

{\bf Keywords:} perfect matching; matching covered graphs; ear decompositions;  tight cut decompositions\\

{\bf 2000 MR Subject Classification} \ 05C70, 05C75

\section{Introduction}
Graphs considered in this paper are loopless, but they may have parallel edges. For standard graph-theoretical notation and terminology, we refer the readers to \cite{BM08} and \cite{LP86}. A {\it matching} in a graph $G$ is a set of pairwise nonadjacent edges. A {\it perfect matching} of $G$ is a matching which covers every vertex of $G$. An edge $e$ of $G$ is {\it admissible} in $G$ if there is a perfect matching $M$ of $G$ such that $e\in M$. An edge $e$ of $G$ is a {\it $k$-matching edge} if it is contained in exactly $k$ perfect matchings. If $e$ is contained by at least $k$ perfect matchings, we call $e$ a {\it $k^+$-matching edge}.  A connected graph $G$ is {\it matching covered} if each of its edges is admissible. A connected graph $G$ is {\it $k$-matching covered} if each of its edges is a $k$-matching edge. In particular, a $2$-matching covered graph is also called a {\it matching double covered} graph.



Let $G$ be a graph with vertex set $V(G)$. For any set $X$ of $V(G)$, we denote $C=\nabla_G(X)$ (or simply $C=\nabla(X))$ the cut of $G$ with $X$ and $\overline{X}=V(G)-X$    as its {\it shores}; in other words, $\nabla(X)$ is the set of all edges of $G$ which have precisely one end in $X$. Write $|C|=|\nabla(X)|$. A cut is {\it trivial} if either of its shores is a singleton. A cut is {\it odd (even)} if both its shores have an odd (even) number of vertices. The graph obtained from $G$ by contracting $\overline{X}$ to a single vertex $\overline{x}$ is denoted by $G\{X;\overline{x}\}$ and the graph obtained from $G$ by contracting $X$ to a single vertex $x$ is denoted by $G\{\overline{X};x\}$; We call $ x$ and $\overline{x}$ the {\it contraction vertices of $C$}. We shall refer to these
two graphs $G\{X;\overline{x}\}$ and $G\{\overline{X};x\}$ as the {\it $C$-contractions of $G$}.

Let $G$ be a matching covered graph and let $X$ be an odd subset of $V(G)$. Then, the
cut $C:=\nabla(X)$ is called a {\it separating cut} of $G$ if the two $C$-contractions of $G$ are matching covered. A cut $C:=\nabla(X)$ is a {\it tight cut} of $G$ if $|C\cap M|=1$ for every  perfect matching $M$ of $G$. Clearly, every trivial cut is a tight cut. It is easy to check that every tight cut of $G$ is a separating cut of $G$. Thus if $C$ is a nontrivial tight cut of $G$, the two $C$-contractions of $G$ are matching
covered graphs that have fewer vertices than $G$. If either of the $C$-contractions has a nontrivial tight cut, that graph can be further decomposed into even smaller matching covered graphs. This procedure can be repeated until one obtains a
list of graphs each of which is a matching covered graph that has no nontrivial tight cuts. This procedure is known as {\it a tight cut decomposition} of $G$.

A matching covered graph which is free of nontrivial tight cuts is a {\it brace} if it is bipartite, and a {\it brick} if it is nonbipartite. Lov\'{a}sz proved the following remarkable result:

\begin{lemma}\cite{L1986}\label{L1986}
The results of any two applications of the tight cut decomposition procedure on a matching covered graph $G$ are the same list of bricks and braces except possibly for the multiplicities of edges.
\end{lemma}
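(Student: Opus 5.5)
The plan follows Lovász's original strategy: show that the tight cut decomposition is governed by a canonical family of tight cuts. Fix a matching covered graph $G$ and induct on $|V(G)|$. If $G$ has no nontrivial tight cut, the list is $\{G\}$ and there is nothing to prove. Otherwise, consider two applications of the procedure that begin with different nontrivial tight cuts $C=\nabla(X)$ and $D=\nabla(Y)$. We must show that both yield the same list of bricks and braces up to edge multiplicities.

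\textbf{Uncrossing step.} If $C$ and $D$ do not cross, then $D$ survives as a tight cut in one of the $C$-contractions, and $C$ survives in one of the $D$-contractions. Any perfect matching of a contraction extends to a perfect matching of $G$ meeting the other cut once, so tightness is inherited. Decomposing by $C$ then $D$, or by $D$ then $C$, therefore produces the same three graphs, up to multiplicities created by the contractions. The induction hypothesis, applied to each smaller contraction, then finishes this case.

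\textbf{Crossing case.} Suppose $C$ and $D$ cross. Replacing $Y$ by $\overline{Y}$ if necessary, we may assume $|X\cap Y|$ is odd. The key claim is that $\nabla(X\cap Y)$ and $\nabla(X\cup Y)$ are tight, and are nontrivial unless degenerate. To prove it, take any perfect matching $M$ and use the identity
\[|M\cap\nabla(X)|+|M\cap\nabla(Y)|=|M\cap\nabla(X\cap Y)|+|M\cap\nabla(X\cup Y)|+2|M\cap E(X\setminus Y,Y\setminus X)|.\]
Both cuts on the right are odd, so each meets $M$ in at least one edge. The left side equals $2$, which forces both right-hand terms to equal $1$ and no $M$-edge to join $X\setminus Y$ to $Y\setminus X$.

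Given this, we pick a cut that does not cross $C$ or $D$, such as $\nabla(X\cap Y)$. If it is nontrivial, compare the decompositions starting with $C$, with $\nabla(X\cap Y)$, and with $D$. By the non-crossing case, consecutive pairs agree, which reduces everything to the inductive hypothesis.

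\textbf{Main obstacle.} The hard part is the degenerate situation where $X\cap Y$ or $X\cup Y$ is trivial, say $|X\cap Y|=1$, so that no nontrivial uncrossed cut is available. I would handle this by working directly with the contractions. One checks that $G\{X;\overline{x}\}$ and $G\{Y;\overline{y}\}$ contain corresponding tight cuts that, once contracted, yield isomorphic graphs up to multiplicity of edges. Induction on the number of vertices, applied to these smaller matching covered graphs, then closes the argument.

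Finally, I would verify that tight contractions of matching covered graphs are matching covered and that bipartiteness is preserved or determined consistently, so the brick/brace labels also match.
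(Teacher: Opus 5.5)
The paper does not prove this lemma; it is quoted from Lovász. Your argument therefore has to stand on its own. The induction on $|V(G)|$, the non-crossing case, and the uncrossing identity showing that $\nabla(X\cap Y)$ and $\nabla(X\cup Y)$ are tight are the standard route, and those steps are fine.

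The gap is the case you flag as the main obstacle. That case is precisely $|X\cap Y|=1=|\overline{X\cup Y}|$; if only one of them is a singleton, the other already gives a nontrivial uncrossed cut. There you only assert that $G\{X;\overline{x}\}$ and $G\{Y;\overline{y}\}$ ``contain corresponding tight cuts'' whose contractions are isomorphic. This is unproved and points the wrong way. Write $X\cap Y=\{u\}$, $\overline{X\cup Y}=\{w\}$, $A=X\setminus Y$ and $B=Y\setminus X$. Then those two graphs live on $A\cup\{u,\overline{x}\}$ and $B\cup\{u,\overline{y}\}$ and are unrelated. Moreover, $D$ does not even induce a cut of $G\{X;\overline{x}\}$, because $\overline{x}$ absorbs both $B\subseteq Y$ and $w\notin Y$.

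The missing idea is already in your identity. No perfect matching uses an edge joining $A$ to $B$, so, since $G$ is matching covered, there are no such edges at all. Consequently the contractions correspond \emph{crosswise}, with no further cutting needed. $G\{X;\overline{x}\}$ and $G\{\overline{Y};y\}$ both consist of $G[A]$ plus two extra vertices ($u,\overline{x}$, respectively $y,w$), joined to $A$ by $E(u,A)$ and $E(w,A)$ respectively. They differ only in the multiplicity of the edge joining the two extra vertices: $|E(u,B)|+|E(u,w)|$ versus $|E(w,B)|+|E(u,w)|$. Symmetrically, $G\{\overline{X};x\}$ corresponds to $G\{Y;\overline{y}\}$. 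Both multiplicities are positive. If, say, the first were $0$, then $B$ would be attached to the rest of $G$ only through $w$; as $|B|$ is even, no edge of $E(B,w)$ could lie in a perfect matching, forcing $G$ to be disconnected.

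Finally, to apply induction to two graphs that differ in edge multiplicities, you must state the inductive claim for matching covered graphs with the same underlying simple graph. Your hypothesis as written only compares two runs on one fixed graph. The strengthening is harmless: for matching covered graphs, tightness of $\nabla(X)$ and the underlying simple graphs of the contractions depend only on which vertex pairs are adjacent.
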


A cycle is {\it even} if it has an even number of edges, and  {\it odd} otherwise. A cycle is called a $k$-cycle if it contains $k$ edges.  A path is {\it even} if it has an even number of edges, that is, its length is even, and  {\it odd} otherwise.  A {\it single ear} of a graph $G$ is a odd path whose internal vertices (if any) all have degree two in $G$. A {\it double ear} of $G$
is a pair of vertex-disjoint single ears of $G$. An {\it ear} of $G$ is either a single ear
or a double ear of $G$. An {\it ear decomposition} of a matching covered graph $G$ is a sequence $(G_1, G_2, \ldots, G_r)$ of matching covered subgraphs of $G$ such that: (i) $G_1=K_2$, (ii) $G_{i+1}=G_i\cup P_i$, where $P_i$ is an ear (single or double) of
$G_{i}$ in $G$, where $1\leq i<r$, (iii) $G_r=G$.  In this case, we can call {\it $G$ has an ear decomposition such that $G=K_2+P_1+\cdots+P_r$}, where $P_1,P_2,\ldots,P_r$ are the successive ears used in the ear decomposition of $G$. The following fundamental results on ear decompositions were established by Lov\'{a}sz and Plummer.

\begin{lemma}\cite{LP86}
Every matching covered graph has an ear decomposition.
\end{lemma}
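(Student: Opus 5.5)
The approach is induction on the number of edges, using a structural fact about removable edges and ears. The base case is $G=K_2$ (possibly with parallel edges, each of which is admissible), where the trivial sequence works.

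For the inductive step, let $G$ be matching covered with more edges than $K_2$. The goal is to find a single or double ear $P$ of $G$ such that $G-P$ is matching covered, where $G-P$ means deleting the edges of $P$ and its internal vertices. If such a $P$ exists, the inductive hypothesis gives an ear decomposition of $G-P$. Appending $P$ then yields one for $G$, because $G-P$ together with $P$ is $G$ and $P$ is an ear of $G$.

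To find $P$, I would rely on two standard facts.

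\emph{Case 1: $G$ is bipartite.} Here Hetyei's argument works directly. Take an edge $e$ of $G$ such that $G-e$ is matching covered after suppressing degree-two paths, that is, a removable edge or ear. In a bipartite matching covered graph on more than two vertices, with the 2-connectivity that matching covered graphs on at least four vertices have, such a removable single ear always exists. One shows this by taking a maximal proper matching covered subgraph $H$ that is nice, meaning $G-V(H)$ has a perfect matching, and checking that $G$ is obtained from $H$ by adding one odd path.

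\emph{Case 2: $G$ is nonbipartite.} Here one uses the Lov\'asz--Plummer two-ear theorem. Let $H$ be a maximal nice matching covered proper subgraph of $G$; nice subgraphs exist, for example a single edge. Matching coveredness of $G$ gives an $M$-alternating path or cycle for a perfect matching $M$ that contains a perfect matching of $H$, exiting and re-entering $H$. This produces an odd path $Q$ with both ends in $H$ and internal vertices outside $H$. If $H+Q$ is matching covered, maximality forces $H+Q=G$, so $Q$ is a single ear. Otherwise some edge of $H$ becomes inadmissible in $H+Q$. The standard argument then finds a second vertex-disjoint $M$-alternating path $Q'$ with $H+Q+Q'$ matching covered. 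Maximality forces this to equal $G$, and $Q\cup Q'$ is a double ear.

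The main obstacle is this last step: showing that one or two ears always suffice to restore matching coveredness, and that the resulting intermediate subgraph is nice, so that induction applies. I would handle it via the characterization that a nice subgraph $H+Q$ is matching covered unless the ends of $Q$ lie in the same color class of a bipartite block of a barrier structure. I would then pick $Q'$ to cross that barrier, using the canonical partition or Tutte's theorem applied to $G$.

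Since the statement is a cited classical result from \cite{LP86}, I would ultimately defer these details to that reference.
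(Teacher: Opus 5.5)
The paper gives no proof of this lemma; it quotes it from \cite{LP86}. Your framework is the Lovász--Plummer one: grow a nice matching covered subgraph by one or two ears, with single ears sufficing in the bipartite case. Your bipartite case is sound, since an $M$-ear in a bipartite graph has ends $u,v$ of opposite colours, and a bipartite matching covered $H$ has $H-u-v$ perfectly matchable for such $u,v$. However, the nonbipartite step carries the whole content of the theorem, and your sketch of it is not a proof and is wrong in two places.

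First, if $Q$ has ends $u,v$, the edges of $H$ always stay admissible in $H+Q$: combine a perfect matching of $H$ with $M\cap E(Q)$. What can fail is admissibility of the edges of $Q$ not in $M$. This happens exactly when $H-u-v$ has no perfect matching, i.e.\ when $u,v$ lie in a common barrier of $H$. It is not ``the same colour class of a bipartite block'', which only makes sense when $H$ is bipartite.

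Second, and more seriously, an arbitrary first ear $Q$ cannot in general be completed by a vertex-disjoint second ear. Take $H$ to be the $4$-cycle with edges $12,23,34,41$, and add vertices $a,b$ with edges $ab,1a,2a,3b,4b$.
\begin{itemize}
\item The resulting $G$ is matching covered: the perfect matchings $\{12,34,ab\}$, $\{14,23,ab\}$, $\{1a,4b,23\}$ and $\{2a,3b,14\}$ cover every edge.
\item $H$ is nice, and $M=\{12,34,ab\}$.
\item $Q=1ab3$ is an $M$-ear whose ends lie in the barrier $\{1,3\}$ of $H$.
\item Every other ear of $H$ in $G$ passes through $a$ or $b$, so no ear disjoint from $Q$ exists.
\item Yet $H+1ab4$ is already nice and matching covered.
\end{itemize}

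Your maximality assumption does not rescue the step. For a maximal $H$, the existence of $Q'$ follows only from knowing that $G$ is $H$ plus one or two ears, which is precisely what is being proved.

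What is missing is the two-ear lemma itself: for every nice matching covered proper subgraph $H$ of $G$ there exist one or two vertex-disjoint ears $P$ with $H+P$ nice and matching covered. In its proof the ears must be chosen jointly, using the barrier structure of $H$, and the first ear found may have to be discarded. Since you defer exactly this step to \cite{LP86}, your proposal reduces to the same citation the paper uses, with an inaccurate description of the deferred step.
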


\begin{lemma}\cite{LP86}\label{L-ear-decom-bipartite}
Given any bipartite matching covered graph $G$, there exists a sequence
$$G_1\subset G_2 \subset \cdots \subset G_r$$
of matching covered subgraphs of $G$, where (i) $G_1=K_2$, $G_r=G$ and (ii) $G_{i+1}$ is
the union of $G_{i}$ and a single ear of $G_{i+1}$, $1\leq i\leq r-1$.
\end{lemma}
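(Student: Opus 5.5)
We argue by induction on $|E(G)|$, using only the elementary theory of bipartite matching covered graphs (Hall's theorem and the characterization of admissible edges). If $|E(G)|=1$ then $G=K_2$ and the sequence $G_1=K_2$ suffices. Otherwise, let $G$ be bipartite matching covered with bipartition $(A,B)$, $|A|=|B|$. The goal is to exhibit a proper matching covered subgraph $H$ of $G$ such that $G=H\cup P$ for a single ear $P$ of $G$. Then induction applied to $H$ gives $G_1\subset\cdots\subset G_{r-1}=H$, and appending $G_r=G$ completes the sequence.

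To find $H$, we first try to delete one edge. Recall that a bipartite graph with a perfect matching is matching covered (and connected) if and only if for every nonempty proper $S\subset A$ we have $|N(S)|\ge |S|+1$. If some edge $e$ has $G-e$ matching covered, take $H=G-e$ and $P=e$, a single ear of length one. Otherwise every edge is ``non-removable''. In that case we take a maximal path $P$ all of whose internal vertices have degree two in $G$; such a path has odd length in the appropriate sense. Deleting the internal vertices and edges of $P$ yields $H$. Two things must then be checked. First, $H$ still has a perfect matching: take a perfect matching $M$ of $G$ containing the end edges of $P$, and restrict it. Second, $H$ is matching covered, which follows by verifying the strict Hall condition in $H$ through counting neighbourhoods.

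The main obstacle is proving that such a removable ear exists at all when $G\neq K_2$. The standard route is to pick a set $S\subset A$ that is tight for $G-e$, meaning $|N_{G-e}(S)|=|S|$. We would then use submodularity of $|N(S)|-|S|$ to show that the tight sets of different non-removable edges interact in a way that forces the degree-two vertices to form odd paths. An alternative route, which I would try first, is the converse construction. Start from an arbitrary perfect matching $M$ and grow $H$ as a maximal matching covered subgraph that is a union of $M$-alternating cycles. If $H\ne G$, connectedness gives an edge leaving $H$, and we extend it to an $M$-alternating path $P$ whose ends both lie in $H$, one in each colour class. Because $G$ is bipartite, the endpoints of $P$ lie in opposite colour classes, so $P$ is odd. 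Then $H\cup P$ is matching covered: each edge of $P$ lies either in $M$ or in $M\triangle C$ for the alternating cycle $C$ formed by $P$ together with an $M$-alternating segment of $H$.

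The delicate point in this alternative is that $P$ must be a single ear of $H\cup P$, which requires the internal vertices of $P$ to lie outside $H$. This can be arranged by choosing $P$ to meet $H$ only at its ends, that is, by taking a shortest such path.
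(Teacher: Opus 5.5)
The paper does not prove this lemma; it is quoted from Lov\'asz--Plummer. The standard argument there grows the graph from $K_2$ by alternating paths, which is essentially your second route.

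Your first route is not a proof. The existence of a removable ear when no edge is removable is the whole content of the statement, and the remark about tight sets and submodularity does not establish it. Nor is the parity of a maximal degree-two path automatic: in $K_{3,3}$ minus an edge, both maximal paths through degree-two vertices have length $2$. Oddness would therefore have to be derived from the no-removable-edge hypothesis, which you do not do.

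In the second route, the steps that carry the weight are asserted rather than proved.

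(i) The parity argument is circular. Bipartiteness only says that $P$ is odd if and only if its ends lie in opposite classes. What actually forces oddness is an invariant you never state: $H$ contains the $M$-edge of each of its vertices (so start with $G_1$ an edge of $M$). Given that, take a perfect matching $N$ of $G$ containing the leaving edge $uv$, and follow the cycle of $M\triangle N$ through $uv$ from $u$ to its first return $w$ to $V(H)$. The first and last edges of this path are not in $M$. Hence $P$ is odd, $w\neq u$, and the interior of $P$ is matched by $M$ inside $P$, so the invariant persists.

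(ii) The real missing ingredient is the $M$-alternating segment of $H$ joining $u$ and $w$, equivalently a perfect matching of $H-u-w$. This is what puts the non-$M$ edges of $P$ into a perfect matching of $H\cup P$. The cycle of $M\triangle N$ does not supply it, since beyond $w$ it may leave $H$ again. You need the lemma that a bipartite matching covered graph minus one vertex from each colour class has a perfect matching. It follows at once from the strict Hall condition you quoted in the first route, but it must be invoked.

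The same lemma covers a case you omit: $V(H)=V(G)$ but $E(H)\neq E(G)$. Then no edge leaves $H$, and one adds an edge of $E(G)\setminus E(H)$ as an ear of length one.

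Finally, \emph{a maximal matching covered subgraph that is a union of $M$-alternating cycles} is simply $G$ (for $G\neq K_2$) and carries no ear information. The iteration or maximality must be over subgraphs already built from $K_2$ by single ears. With these repairs the argument is complete.
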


\begin{lemma}\cite{X2018}\label{uni-pm}
Let $G$ be a connected and simple graph. If each edge of $G$ belongs to a unique perfect matching and $V(G)\geq 6$, then $G$ is an even cycle.
\end{lemma}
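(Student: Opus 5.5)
The plan is to prove the statement by first showing that $G$ is regular and then ruling out degree at least $3$. Since every edge lies in a perfect matching, $G$ is matching covered. Let $M_1,\dots,M_m$ be all the perfect matchings of $G$.

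\textbf{Regularity.} Because each edge lies in exactly one perfect matching, the $M_i$ are pairwise edge-disjoint and their union is $E(G)$. Every vertex is covered exactly once by each $M_i$, so $G$ is $m$-regular and $E(G)=M_1\cup\cdots\cup M_m$ is a partition. If $m=1$, then $G$ is connected with all degrees equal to $1$, so $G=K_2$, which contradicts $|V(G)|\ge 6$. If $m=2$, then $G$ is a connected $2$-regular graph with a perfect matching, hence an even cycle. So it remains to show $m\ge 3$ is impossible.

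\textbf{The union of two matchings is a Hamiltonian cycle.} For $i\ne j$, $M_i\cup M_j$ is a disjoint union of even $(M_i,M_j)$-alternating cycles. Suppose it had a component cycle $D$ that is not spanning. Then $M_i\triangle E(D)$ would be a perfect matching containing edges of both $M_i$ and $M_j$. It is not among the $M_k$, a contradiction. Hence every $M_i\cup M_j$ is a Hamiltonian cycle.

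\textbf{A chord at odd distance gives a contradiction.} Assume $m\ge3$ and let $H=M_1\cup M_2$. Every edge $e=uv\in M_3$ is a chord of $H$; simplicity is used here, since otherwise $e$ could be parallel to an edge of $H$. Suppose $u$ and $v$ are at odd distance along $H$. Then $H-\{u,v\}$ consists of two paths, each with an even number of vertices. Each path has a perfect matching using edges of $H$, and together these give a perfect matching $M$ containing $e$. Since $n\ge 6$, $M$ also contains at least one edge of $H$. But $M$ must be some $M_k$, and no $M_k$ contains both an edge of $M_3$ and an edge of $M_1\cup M_2$. This is a contradiction.

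\textbf{Main obstacle: all chords at even distance.} The remaining case is that every edge of $M_3$ joins two vertices of the same colour class of the even cycle $H$. Since $M_3$ covers both colour classes, there exist chords $e=uv$ with $u,v$ in class $A$ and $f=xy$ with $x,y$ in class $B$.

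If $e$ and $f$ interleave around $H$, say in cyclic order $u,x,v,y$, then consecutive points lie in different classes. So each of the four open segments of $H$ between them has an even number of vertices. Matching those segments by $H$-edges together with $e,f$ gives a perfect matching containing $M_3$-edges and, since $n\ge6$ or by the regularity count, at least one $H$-edge. This is the same contradiction as before.

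If no $A$-chord interleaves any $B$-chord, I would repeat the argument with the roles of the matchings permuted. The Hamiltonian cycles $M_1\cup M_3$ and $M_2\cup M_3$ induce different cyclic orders on $V(G)$. In one of them some $M_2$- or $M_1$-chord should be at odd distance, or an interleaving pair should appear. One would prove this by considering a non-interleaving (laminar) family of same-parity chords and showing it forces $M_1\cup M_3$ to split into several cycles.

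This last combinatorial step is where I expect the real work to lie.
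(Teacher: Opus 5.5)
\textbf{Verdict: genuine gap.} Your setup is sound, but the proof stops one step short of a contradiction. The parts that work are these:
\begin{itemize}
\item the perfect matchings partition $E(G)$, so $G$ is $m$-regular;
\item every $M_i\cup M_j$ must be a Hamiltonian cycle;
\item an $M_3$-chord at odd distance along $H=M_1\cup M_2$ gives a perfect matching that mixes $M_3$-edges with $H$-edges;
\item so does an interleaving pair made of an $A$-chord and a $B$-chord, since $n\ge 6$ forces at least one $H$-edge.
\end{itemize}
The gap is the final case, which you leave as a plan: the configuration where no $A$-chord interleaves a $B$-chord. The route you sketch, permuting the roles of the matchings and showing that a laminar family forces $M_1\cup M_3$ to split, is not carried out. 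As written, the proof therefore does not exclude a cubic subgraph $M_1\cup M_2\cup M_3$ with all chords non-crossing across classes. That route is also unnecessary, because this case is empty. (The paper gives no proof of this lemma; it only quotes it from the literature, so there is no in-paper argument to compare with.)

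\textbf{The missing idea is an extremal choice of chord.}
\begin{enumerate}
\item Among all edges of $M_3$, choose $uv$ whose shorter arc $P$ on $H$ has minimum length $d$. By the even-distance case assumption, $d$ is even, and $d\ge 2$.
\item Let $w$ be the $H$-neighbour of $u$ on $P$. Then $w$ is an interior vertex of $P$ and lies in the colour class opposite to $u$.
\item Let $z$ be the $M_3$-partner of $w$. Then $z\notin\{u,v\}$. Also $z$ cannot be interior to $P$: otherwise the subpath of $P-\{u,v\}$ from $w$ to $z$ would give $wz$ an arc of length at most $d-2<d$, contradicting minimality.
\item Hence $z$ lies on the other arc. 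The chords $uv$ and $wz$ interleave in the cyclic order $u,w,v,z$, and $wz$ is a chord of the class opposite to $uv$.
\item Your interleaving argument now applies. The set $\{uv,wz\}$ together with $(n-4)/2\ge 1$ edges of $H$ is a perfect matching containing an $M_3$-edge and an $H$-edge, which is a contradiction.
\end{enumerate}
With this step in place of the role-permuting plan, your proof is complete.
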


Suppose that $G$ is a connected matching covered graph and each of its edges is in a unique perfect matching. If $|V(G)|=2$, then $G=K_{2}^*$ which is $K_{2}$ with parallel edges; if $|V(G)|=4$, then $G=K_{4}$ or 4-cycle. Combining this fact with Lemma \ref{uni-pm}, we can obtain:

\begin{lemma}\label{uniqe}
If $G$ is a connected matching covered graph in which each edge is contained in exactly one perfect matching, then $G$ is $K_{2}^*$, $K_{4}$ or an even cycle $C_{2n}$, where $K_{2}^*$ is the graph obtained from $K_{2}$ by adding parallel edges.
\end{lemma}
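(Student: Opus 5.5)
The plan is a case split on $|V(G)|$, reducing to Lemma \ref{uni-pm} once $G$ is large enough, with a separate treatment of parallel edges, which Lemma \ref{uni-pm} does not cover. Since $G$ is matching covered, it has a perfect matching, so $|V(G)|$ is even.

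\emph{Parallel edges.} First suppose $e$ and $e'$ are parallel edges joining $u$ and $v$. Whenever $M$ is a perfect matching containing $e$, the set $M-e+e'$ is a perfect matching containing $e'$. Each edge lies in exactly one perfect matching, so $e$ lies in some $M$, which yields the matching $M-e+e'$ for $e'$. Now take any edge $f\notin\{e,e'\}$ and let $N$ be the unique perfect matching containing $f$. Exactly one edge of $N$ covers $u$. If it joins $u$ to $v$, say it is $e$, then $N-e+e'$ is a second perfect matching containing $f$, a contradiction. So $N$ matches $u$ and $v$ to other vertices. Hence the perfect matchings through $e$ and through $e'$ contain no edge other than these two parallel edges. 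Since every vertex must be covered, $V(G)=\{u,v\}$, and because $G$ is connected and loopless, $G=K_2^*$. In particular, when $G$ has parallel edges this gives $K_2^*$. When $|V(G)|=2$ and $G$ is simple, $G=K_2$, which we also count as $K_2^*$ (equivalently, $C_2$).

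\emph{Simple graphs with $|V(G)|\ge 6$.} Lemma \ref{uni-pm} applies directly and gives that $G$ is an even cycle.

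\emph{Simple graphs with $|V(G)|=4$.} The connected simple graphs on four vertices that have a perfect matching are $P_4$, $C_4$, the paw, $K_4-e$ and $K_4$. In $P_4$, the paw and $K_4-e$, some edge lies in no perfect matching, or the matchings fail to cover the edges exactly once. For example, the diagonal of $K_4-e$ lies in no perfect matching, and so does the pendant-adjacent edge of the paw. The middle edge of $P_4$ is in no perfect matching. That leaves $C_4$ and $K_4$. Both satisfy the hypothesis: $C_4$ has two disjoint perfect matchings partitioning its edges, and $K_4$ has three partitioning its six edges.

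Combining the three cases gives the statement. The only genuine work is the parallel-edge reduction, which Lemma \ref{uni-pm} does not handle since it assumes simplicity. The rest is routine checking of the small graphs on four vertices.
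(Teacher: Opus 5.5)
Your proof is correct and follows the same route as the paper: the same split into $|V(G)|=2$, $|V(G)|=4$ (by inspection) and $|V(G)|\ge 6$ (via Lemma \ref{uni-pm}). Your parallel-edge step adds something the paper's sketch lacks, since the paper never justifies that $G$ is simple once $|V(G)|\ge 4$, and Lemma \ref{uni-pm} requires simplicity. That step can be stated more cleanly: any $g\in M\setminus\{e\}$ lies in both $M$ and $M-e+e'$, so $M=\{e\}$. As written, your claim about an arbitrary $f\notin\{e,e'\}$ fails when $f$ is itself a third parallel $uv$-edge, but you only use it for edges of $M$ other than $e$, so the conclusion stands.
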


There is a great amount of literature on graphs whose edges are in some maximum matching or perfect matching. A graph is {\it equimatchable} in which every matching can be extended to a maximum matching. Lesk {\it et al}. \cite{ML84} obtained a structural characterization of equimatchable graphs via the Gallai-Edmonds Structure Theorem, and yielded a polynomial-time recognition algorithm. Akbari {\it et al}. \cite{SA182} obtained a characterization of claw-free equimatchable graphs. Akbari {\it et al}. \cite{SA183} showed  that if $G$ is a equimatchable $r$-regular graph for an odd positive integer $r$, then $G$ is $K_{r+1}$ or $K_{r,r}$. In the same paper, they also proved that the only triangle-free equimatchable $r$-regular graphs are $C_{5}$, $C_{7}$ and $K_{r,r}$, where $r$ is a positive integer. B$\rm \ddot{u}$y$\rm \ddot{u}$k\c{c}olak {\it et al}. \cite{YB22} obtained a complete characterization of triangle-free equimatchable graphs and a linear time algorithm that recognizes whether a given nonbipartite graph is equimatchable and triangle-free. The class of equimatchable graphs having perfect matchings are called {\it randomly matchable graphs}. Sumner \cite{DP79} showed that a graph is randomly matchable if and only if each connected component is isomorphic to $K_{2n}$ or $K_{n,n}$ for $n\geq 1$. Mkrchyan \cite{VV06} proved that minimal graphs in which every edge can be extended to a maximum matching without isolated vertices contain a perfect matching. Niu {\it et al}. \cite{NZWL2024} obtained characterizations of factor-critical graphs and bipartite graphs with positive surplus each of whose edges belongs to at most one maximum matching. In the same paper, they also presented the structure of graphs each of whose edges  belongs to at most one perfect matching. In this paper, we will be considered with the new class $\mathfrak{B}$: graphs for which very edge is in exactly one perfect matching or two perfect matchings, and a $1$-matching edge and $2$-matching edge can not in a common perfect matching. Obviously, a matching double covered graph is in $\mathfrak{B}$. In this paper, we focus on characterizing the structures of matching double covered graphs and graphs in $\mathfrak{B}$ respectively. The paper is organized as follows: In Section 2, we first focus on bipartite graphs in the set $\mathfrak{B}$ and obtain:

\begin{theorem}\label{brace-B}
If $G$ is a bipartite matching covered graph, then $G$ is  in $\mathfrak{B}$ if and only if $G$ is $K_{2}^*$, $C_{4}^2$, $C_{4}^4$, $K_{3,3}$ or $C_{2n}$ (see Figure \ref{T6}).
\end{theorem}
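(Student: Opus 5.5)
The plan is to prove the "if" direction by direct counting and the "only if" direction by combining a global counting bound with a local analysis of symmetric differences of perfect matchings. I read $C_4^2$ as the $4$-cycle with one pair of opposite edges doubled, and $C_4^4$ as the $4$-cycle with every edge doubled.

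\emph{Sufficiency.} Each listed graph is checked directly.
In $K_2^*$ and $C_{2n}$ every edge lies in exactly one perfect matching.
In $C_4^2$ the perfect matchings are the four choices of one copy from each doubled edge, plus the single matching formed by the two simple edges. So the doubled edges are $2$-matching edges, the simple edges are $1$-matching edges, and no perfect matching mixes the two types.
In $C_4^4$ (eight perfect matchings) and in $K_{3,3}$ (six perfect matchings) every edge lies in exactly two perfect matchings.

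\emph{Necessity, local tool.} Let $G\in\mathfrak{B}$ be bipartite. First I would record a lemma on two perfect matchings $M\neq M'$. Suppose $M\triangle M'$ consists of $k$ alternating cycles. Then switching along any subset of these cycles yields $2^k$ perfect matchings. Each edge of $M\cap M'$ lies in all $2^k$ of them, and each edge of $M$ on one of the cycles lies in $2^{k-1}$ of them. Since every edge lies in at most two perfect matchings, this gives $k\le 2$. Moreover, if $M\cap M'\neq\emptyset$, then $k=1$ and $M,M'$ are the only perfect matchings containing those common edges.

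\emph{Necessity, global bound.} By Lemma~\ref{L-ear-decom-bipartite}, $G$ has an ear decomposition using only single ears. Each added ear creates a perfect matching not present before, so $G$ has at least $m-n+2$ perfect matchings, where $m=|E(G)|$ and $n=|V(G)|$. Counting pairs (edge, perfect matching containing it) gives
\[
\tfrac n2\,(m-n+2)\le \tfrac n2\,\Phi(G)=\sum_{e}\#\{M: e\in M\}\le 2m ,
\]
where $\Phi(G)$ is the number of perfect matchings. Hence $m(n-4)\le n(n-2)$. For large $n$ this forces $m\le n+2$, so $G$ is a cycle plus at most two single ears. For small $n$ (namely $n\le 10$ or so) it leaves finitely many edge counts to examine.

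\emph{Necessity, case analysis.} For $m=n$, $G$ is $C_{2n}$ or $K_2^*$ with two edges.
For $m\in\{n+1,n+2\}$, I would use the ear structure together with the local lemma. If $G$ has a nontrivial odd-length ear attached to a cycle, I exhibit either a perfect matching containing both a $1$-edge and a $2$-edge, or an edge lying in three perfect matchings. The only surviving configurations on $n=4$ are the multigraphs $C_4^2$ and $C_4^4$.
Parallel edges are handled separately. Parallel edges lie in the same perfect matchings, so a parallel class of size at least $3$ is impossible unless $G=K_2^*$, and doubling forces its partner edges to be $2$-edges. This reduces the multigraph cases to the underlying simple bipartite graph being $C_4$.
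For small $n$ with simple $G$, the local lemma is the main tool. Take two perfect matchings differing in a single cycle; their common edges occur only in them. Using $k\le 2$ and the type-separation condition, the graph must have exactly the $6$ perfect matchings of $K_{3,3}$ when $n=6$, and $n\ge 8$ is impossible.

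The main obstacle is this last finite but fiddly case analysis, namely excluding bipartite graphs on $6$ to about $10$ vertices other than $K_{3,3}$. My first attempt would be to strengthen the counting using the type-separation condition. If $\Phi_1$ and $\Phi_2$ denote the numbers of perfect matchings consisting of $1$-edges and of $2$-edges respectively, then $m=\tfrac n2\Phi_1+\tfrac n4\Phi_2$. Combining this with $\Phi_1+\Phi_2\ge m-n+2$ should pin $m$ down exactly and cut the case analysis to a handful of graphs.
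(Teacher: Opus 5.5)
\paragraph{The gap.} The unproved range is $6\le n\le 12$, which is exactly where the theorem has content. There your inequality $m(n-4)\le n(n-2)$ only gives $m\le 12,12,13,15$ for $n=6,8,10,12$. So cubic graphs on $6$ and $8$ vertices, among others, are still allowed.

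Your sentence saying that the local lemma leaves only $K_{3,3}$ for $n=6$ and nothing for $n\ge 8$ is an assertion, not an argument. The lemma only tells you two things: two perfect matchings sharing an edge differ in a single cycle and are the only ones through the shared edges; and disjoint ones differ in at most two cycles. Nothing in the proposal turns this into, say, the exclusion of cubic bipartite multigraphs on $8$ vertices, or the identification of $K_{3,3}$.

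The multigraph reduction has the same problem. Showing that the partners of a doubled edge are $2$-edges does not make the underlying simple graph $C_4$. Also, $C_4^4$ has $m=n+4$, so it cannot come out of your analysis of $m\in\{n+1,n+2\}$ at all. As written, necessity is not proved.

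\paragraph{How your last idea closes it.} Your closing idea does repair this, and it makes both the local lemma and the ear analysis unnecessary. You need to add the vertex version of the same count. At a vertex $v$, the $1$-edges at $v$ are in bijection with the perfect matchings made of $1$-edges. Each $2$-edge at $v$ is covered by exactly two of the perfect matchings made of $2$-edges. Hence $d(v)=\Phi_1+\Phi_2/2$, so $G$ is regular (Lemma~\ref{B-regular}) and $\Phi_2$ is even.

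Substituting $m=\frac n2\Phi_1+\frac n4\Phi_2$ into $\Phi_1+\Phi_2\ge m-n+2$ gives
\[2(n-2)\Phi_1+(n-4)\Phi_2\le 4(n-2).\]
For $n\ge 6$ this forces $d\le 2$, so $G$ is a cycle and hence $C_n$, with these exceptions: $n=6$ and $(\Phi_1,\Phi_2)\in\{(1,4),(0,6),(0,8)\}$, or $n=8$ and $(\Phi_1,\Phi_2)=(0,6)$.

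These exceptions are $d$-regular bipartite multigraphs with $(n,d,\Phi)=(6,3,5),(6,3,6),(6,4,8),(8,3,6)$. The van der Waerden (Egorychev--Falikman) bound $\Phi\ge d^{k}\,k!/k^{k}$, with $k=n/2$, gives $\Phi\ge 6,6,15,8$ respectively. Equality for $k=d=3$ holds only for the all-ones biadjacency matrix, so only $K_{3,3}$ survives. For the two cubic cases on $6$ vertices a direct check also suffices: the connected $3\times 3$ nonnegative integer matrices with all line sums $3$ have permanents $6$, $8$, $9$, and $6$ occurs only for the all-ones matrix.

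For $n=4$, regularity and connectivity force the biadjacency matrix to be $\left(\begin{smallmatrix}p&q\\ q&p\end{smallmatrix}\right)$ with $p,q\ge 1$. An edge of the $p$-class lies in exactly $p$ perfect matchings, and one of the $q$-class in exactly $q$. So $p,q\in\{1,2\}$, giving $C_4$, $C_4^2$, $C_4^4$. Finally, $n=2$ gives $K_2^*$.

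\paragraph{Comparison with the paper.} Completed this way, your route replaces the paper's Lemmas~\ref{L-two-ears} and~\ref{c2-p2} and its placement analysis of $P_2,P_3$ by a single global count. The price is the van der Waerden bound, or a finite enumeration of small regular bipartite multigraphs, in the residual cases.
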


\begin{figure}[h!]
  \centering
  \includegraphics[width=5.5 in]{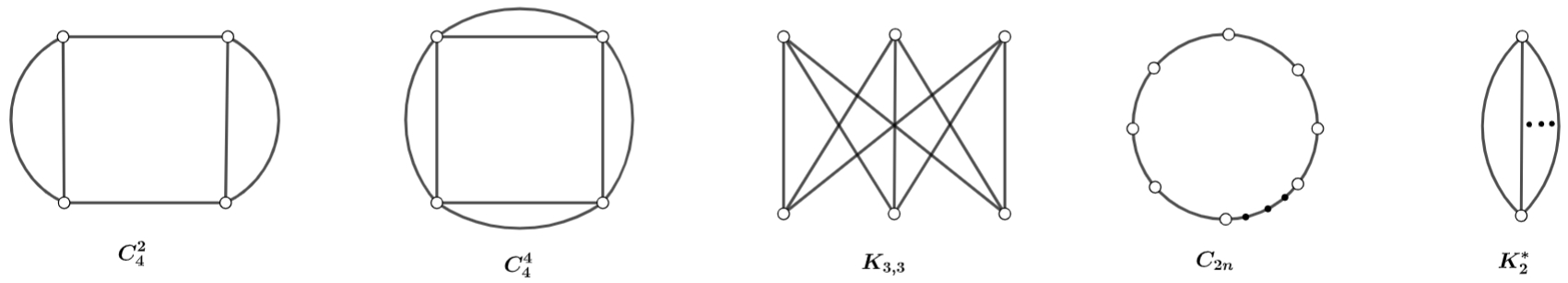}
    \vspace{-0.3cm}
  \caption{The bipartite graphs in $\mathfrak{B}$}\label{T6}
\end{figure}

Because $K_{3,3}$ and $C_{4}^4$ are both matching double covered and braces, we can get conclusions as follows.

\begin{corollary}\label{TH2.1}
Let $G$ be a bipartite graph. Then $G$ is matching double covered if and only if $G$ is $K_{3,3}$ or $C_{4}^4$, see Figure \ref{T6}.
\end{corollary}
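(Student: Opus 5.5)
We derive Corollary~\ref{TH2.1} from Theorem~\ref{brace-B}, checking each candidate graph.

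For the forward direction, let $G$ be a bipartite matching double covered graph. Since $G$ is connected and every edge is a $2$-matching edge, every edge lies in a perfect matching, so $G$ is matching covered. Moreover, every edge is a $2$-matching edge and there are no $1$-matching edges, so trivially no perfect matching mixes the two types. Hence $G\in\mathfrak{B}$, and Theorem~\ref{brace-B} gives $G\in\{K_2^*,C_4^2,C_4^4,K_{3,3},C_{2n}\}$.

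We now exclude the other members of this list by counting perfect matchings through a single edge:
\begin{itemize}
\item In $C_{2n}$ there are exactly two perfect matchings, and they are edge-disjoint, so every edge is a $1$-matching edge.
\item In $K_2^*$ every perfect matching is a single edge, so each edge is again a $1$-matching edge.
\item For $C_4^2$ (read from Figure~\ref{T6} as the $4$-cycle with one pair of opposite edges doubled), an undoubled edge lies in only one perfect matching. So $C_4^2$ is not double covered, although it lies in $\mathfrak{B}$.
\end{itemize}
The exact meaning of $C_4^2$ and $C_4^4$ must be taken from the figure. I expect pinning down this notation to be the only real obstacle.

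For the converse, we verify directly that the remaining two graphs are matching double covered:
\begin{itemize}
\item $K_{3,3}$ has $3!=6$ perfect matchings. Fixing an edge $uv$ leaves $K_{2,2}$, which has exactly $2$ perfect matchings, so every edge lies in exactly two.
\item $C_4^4$ is presumably the $4$-cycle with every edge doubled. It has $2^2+2^2=8$ perfect matchings, and each edge lies in exactly $2$ of them, since its opposite edge has two parallel choices.
\end{itemize}
Both graphs are connected, so both are matching double covered, which completes the equivalence.
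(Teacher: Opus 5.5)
Your proof is correct and takes essentially the same route as the paper, which likewise reads the corollary off Theorem~\ref{brace-B} by noting that among the listed graphs only $K_{3,3}$ and $C_4^4$ are matching double covered. Your readings of $C_4^2$ (the $4$-cycle with one pair of opposite edges doubled) and $C_4^4$ (the $4$-cycle with every edge doubled) are the right ones: they match the paper's later use of these graphs as the $3$-regular and $4$-regular bipartite members of $\mathfrak{B}$ on four vertices (cf.\ Lemma~\ref{B-regular}).
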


\begin{corollary}\label{TH1}
Let $G$ be a bipartite graph. Then $G$ is matching double covered brace if and only if it is $K_{3,3}$ or $C_{4}^4$.
\end{corollary}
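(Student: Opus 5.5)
The statement to prove is Corollary~\ref{TH1}: a bipartite graph $G$ is a matching double covered brace if and only if $G$ is $K_{3,3}$ or $C_4^4$. I would derive it directly from Corollary~\ref{TH2.1}, so no new structural argument should be needed. The proof has two directions.

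For the forward direction, suppose $G$ is a bipartite, matching double covered brace. Forgetting that $G$ is a brace, $G$ is still a bipartite matching double covered graph. Corollary~\ref{TH2.1} then says $G$ is $K_{3,3}$ or $C_4^4$.

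For the converse, I must show that $K_{3,3}$ and $C_4^4$ are matching double covered braces. Here I read $C_4^4$ as the $4$-cycle with two disjoint edges doubled, following Figure~\ref{T6}.

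\begin{itemize}
\item \emph{Matching double covered.} In $K_{3,3}$ there are $3!=6$ perfect matchings. By symmetry each of the $9$ edges lies in $6\cdot 3/9=2$ of them. In $C_4^4$ the two matchings of the underlying $4$-cycle expand to exactly four perfect matchings, and one checks that every edge lies in two of them. Alternatively, this is the ``if'' part of Corollary~\ref{TH2.1}.
\item \emph{Bipartite and matching covered.} Both graphs are visibly bipartite, and they are matching covered by the previous item.
\item \emph{No nontrivial tight cut in $C_4^4$.} It has only $4$ vertices, so every odd cut has a singleton shore and is therefore trivial.
\item \emph{No nontrivial tight cut in $K_{3,3}$.} A nontrivial odd cut $\nabla(X)$ has $|X|=3$, up to complementation. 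If $X$ is one colour class, then every perfect matching uses $3$ edges of the cut, so the cut is not tight. Otherwise $X$ consists of two vertices $a_1,a_2$ from one side and one vertex $b_1$ from the other. Take the perfect matching $\{a_1b_2,\,a_2b_3,\,a_3b_1\}$; all three of its edges cross the cut, so again the cut is not tight.
\end{itemize}

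Hence both graphs are braces, which completes the converse.

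The only step with real content is verifying that $K_{3,3}$ has no nontrivial tight cut, and the short case analysis above settles it. Everything else is inherited from Corollary~\ref{TH2.1}.
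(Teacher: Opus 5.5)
Your route is the paper's. The forward direction is read off from Corollary~\ref{TH2.1} (hence from Theorem~\ref{brace-B}). The converse is the observation that $K_{3,3}$ and $C_4^4$ are matching double covered braces, which the paper merely asserts and you verify. Your tight-cut analysis of $K_{3,3}$ is correct, and so is your remark that a $4$-vertex graph has no nontrivial odd cut.

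The error is in the matching-double-covered check for $C_4^4$, where you have the wrong graph.

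\begin{enumerate}[(a)]
\item The $4$-cycle with two disjoint edges doubled is the paper's $C_4^2$, the $3$-regular member of $\mathfrak{B}_2$ in the proof of Theorem~\ref{3-regular}. It has $2\cdot 2+1\cdot 1=5$ perfect matchings, and each of its two simple edges lies in only one of them. So it is not matching double covered, and your claim that ``every edge lies in two of them'' is false for the graph you defined.
\item Your count of four perfect matchings is wrong under either reading: it is $5$ for your graph and $8$ for the correct one.
\item In a $4$-cycle $v_1v_2v_3v_4$ with edge multiplicities $m_{12},m_{23},m_{34},m_{41}$, each copy of $v_1v_2$ lies in exactly $m_{34}$ perfect matchings, and likewise for the other edges. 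So matching double coveredness forces all four multiplicities to equal $2$.
\end{enumerate}

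Thus $C_4^4$ is the $4$-cycle with every edge doubled. It is $4$-regular, which is how it is used in the proof of Theorem~\ref{4-regular}. It has $4+4=8$ perfect matchings, and each edge lies in exactly two of them.

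With this correction, or by relying only on the ``if'' part of Corollary~\ref{TH2.1}, your argument is complete. The brace check for $C_4^4$ is unaffected, since it uses only that the graph has four vertices.
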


In Section 4, we continue to research nonbipartite graphs in matching double covered graphs and the set $\mathfrak{B}$ respectively. We firstly obtain the structure of bricks in $\mathfrak{B}$ by presenting the following result.

\begin{theorem}\label{brick-B}
Let $G$ be a brick. Then $G$ is in $\mathfrak{B}$ if and only if $G$ is $K_{4}$, $K_{4}^2$, $K_{4}^4$, the Petersen graph $P$, $T_{1}$ or $K_{4}^6$ (see Figure \ref{T14}).
\end{theorem}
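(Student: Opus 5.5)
The statement has two directions. The ``if'' direction will be a direct finite check. For $K_4$, every edge lies in exactly one of the three perfect matchings. For the Petersen graph $P$, every edge lies in exactly two of the six perfect matchings. For the multigraphs $K_4^2$, $K_4^4$, $K_4^6$ and for $T_1$, I will list the perfect matchings explicitly from Figure \ref{T14}. I will then verify three things: each edge lies in one or two of them, and no perfect matching uses both a $1$-matching edge and a $2$-matching edge. Each of these graphs is already known, or easily checked, to be a brick: for $K_4$ and $P$ this is standard, and adding parallel edges does not create tight cuts.

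The ``only if'' direction is the substance. Let $G$ be a brick in $\mathfrak{B}$. The total number of perfect matchings is then very restricted. Every perfect matching consists entirely of $1$-matching edges or entirely of $2$-matching edges. The $1$-matching edges therefore form a disjoint union of perfect matchings, each of which is isolated from all the others. The $2$-matching edges form a family of perfect matchings $M_1,\dots,M_t$ in which each edge is covered exactly twice.

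I will handle the parallel-edge structure first. If $e,e'$ are parallel, they lie in the same number of perfect matchings, and every perfect matching through $e$ corresponds bijectively to one through $e'$. So parallel classes come in the same type, and if some class has size $\ge 2$, the vertex pair supports perfect matchings of the reduced graph. I will reduce to the underlying simple brick $H$. In $H$, a parallel class of size $m$ carrying an edge that lies in $k$ perfect matchings of $H$ gives each edge exactly $k$ perfect matchings of $G$. The condition ``$\le 2$'' together with the no-mixing condition will force that classes of size $\ge 2$ occur only at edges whose containing matchings are disjoint from other multi-classes. This should pin the multigraphs down to $K_4$ with one, two or three parallel pairs doubled, which I expect to give $K_4^2$, $K_4^4$, $K_4^6$ and possibly $T_1$.

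For simple bricks I will use an ear decomposition $K_2+P_1+\cdots+P_r$ of $G$. A brick needs at least one double ear, and bricks other than $K_4$ and the prism admit ear decompositions whose final steps are single-ear additions to a smaller brick. The key counting tool is the following. If $G' = G - P$ for a removable ear $P$, then every perfect matching of $G'$ extends to $G$, and $P$'s edges add new perfect matchings. Since each edge of $G'$ lies in at most two perfect matchings of $G$, $G'$ itself has each edge in at most two perfect matchings. Induction then constrains $G'$ to a small family: bricks or near-bricks with few perfect matchings. I will then count the perfect matchings added by ear $P$: at least one through the ear, and typically more through an even alternating cycle. This quickly forces $|V(G)|\le 10$, since a brick on many vertices has an edge in at least three perfect matchings. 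I will justify that last bound by a lower bound on perfect matchings of bricks, or by the observation that each removable edge $e$ with $G-e$ matching covered forces its neighbours to gain matchings.

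Finally, a case analysis on $|V(G)|\in\{4,6,8,10\}$ will show that only $K_4$, $P$ (or $T_1$) survive. On $6$ vertices, the prism and the other small bricks already have edges in three perfect matchings. The main obstacle is this step: proving that a brick in $\mathfrak{B}$ has bounded order, and excluding the $6$- and $8$-vertex bricks. I would first try the edge-count identity $\sum_e \#\{M\ni e\} = \tfrac{n}{2}\,\Phi(G)$, where $\Phi(G)$ is the number of perfect matchings. Combined with each edge count being $\le 2$ and with known lower bounds on $\Phi$ for bricks in terms of edges, this gives $|E|\cdot 2\ge \tfrac n2\Phi$, which should force the order to be small.
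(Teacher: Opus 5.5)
\textbf{There is a genuine gap.} The ear-decomposition induction has no induction hypothesis to apply. The only property that passes from $G$ to $G'=G-P$ is the absence of $3^+$-matching edges. A $2$-matching edge of $G$ can drop to a $1$-matching edge of $G'$ and create a mixed perfect matching, so in general $G'\notin\mathfrak{B}$. Usually $G'$ is not a brick either. For the Petersen graph, every $P-e$ has two bricks in its tight cut decomposition, so your claim about ``single-ear additions to a smaller brick'' already fails there. In general you only get a near-brick whose retract is a brick, which is the inverse of the vertex expansions the paper uses.

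An inductive argument would therefore have to control the larger class of bricks with no $3^+$-matching edge; this class contains the prism and $K_4$ plus one parallel edge. The paper does exactly this along a generation sequence from $K_4$, $\overline{C}_6$ or the Petersen graph. It uses the fact that $3^+$-matching edges persist under edge additions and vertex expansions (Lemma~\ref{H-G}), and invokes regularity and no-mixing only for the final graph.

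Your multigraph reduction is also incorrect, for three reasons. A copy of $e$ lies in $\sum_{N\ni e}\prod_{f\in N-e}\mu(f)$ perfect matchings, not in $k$; here $N$ ranges over perfect matchings of the underlying simple graph and $\mu$ denotes multiplicities. The constraint you predict is contradicted by $K_4^2$, where no-mixing forces both edges of the perfect matching $\{v_1v_4,v_2v_3\}$ to be doubled together. And nothing in your plan shows that the underlying simple brick must be $K_4$.

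Your 6-vertex claim is false as well. Take the prism with triangles $a_1a_2a_3$, $b_1b_2b_3$ and rungs $a_ib_i$. Every edge lies in at most two perfect matchings. The prism is excluded only because $\{a_1b_1,a_2a_3,b_2b_3\}$ mixes a $2$-matching rung with $1$-matching triangle edges. Moreover, $T_1$ is the octahedron $K_{2,2,2}$ (the prism plus three disjoint non-edges). It is a simple 6-vertex brick in $\mathfrak{B}$, not a $K_4$-multigraph.

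The counting idea is worth keeping and would give a genuinely different route, but the bound you leave open is the crux. Use Edmonds--Lov\'asz--Pulleyblank: the perfect matching polytope of a brick has dimension $m-n+1$ (parallel edges allowed), so $\Phi(G)\ge m-n+1$. Combined with $\frac n2\Phi(G)=\sum_e c(e)\le 2m$ and $m\ge 3n/2$, this gives $n\le 10$.

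You can sharpen this using regularity. Let $\Phi_1$ and $\Phi_2$ be the numbers of perfect matchings consisting of $1$-matching and of $2$-matching edges respectively. Every vertex has degree $d=\Phi_1+\Phi_2/2$, which is why $G$ is regular, so $\Phi=2d-\Phi_1\le 2d$. This yields:
\begin{itemize}
\item $d\le 5$ for $n=6$;
\item $d=3$ and $\Phi\in\{5,6\}$ for $n=8$;
\item $d=3$ and $\Phi=6$ (matching double covered) for $n=10$.
\end{itemize}
The case $n=4$ is a direct computation with multiplicities: opposite edges of $K_4$ must have equal multiplicity $1$ or $2$, giving exactly $K_4,K_4^2,K_4^4,K_4^6$.

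What your proposal does not supply is the remaining finite analysis:
\begin{itemize}
\item the 6-vertex (multi)graph bricks with $m\le 15$;
\item the cubic 8-vertex bricks with five or six perfect matchings;
\item a proof that the Petersen graph is the only cubic 10-vertex brick with exactly six perfect matchings.
\end{itemize}
This route trades the generation theorem for a polyhedral bound plus an enumeration. As written, neither the bound nor the enumeration is in place.
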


\begin{figure}[h!]
  \centering
  \includegraphics[width=4.0 in]{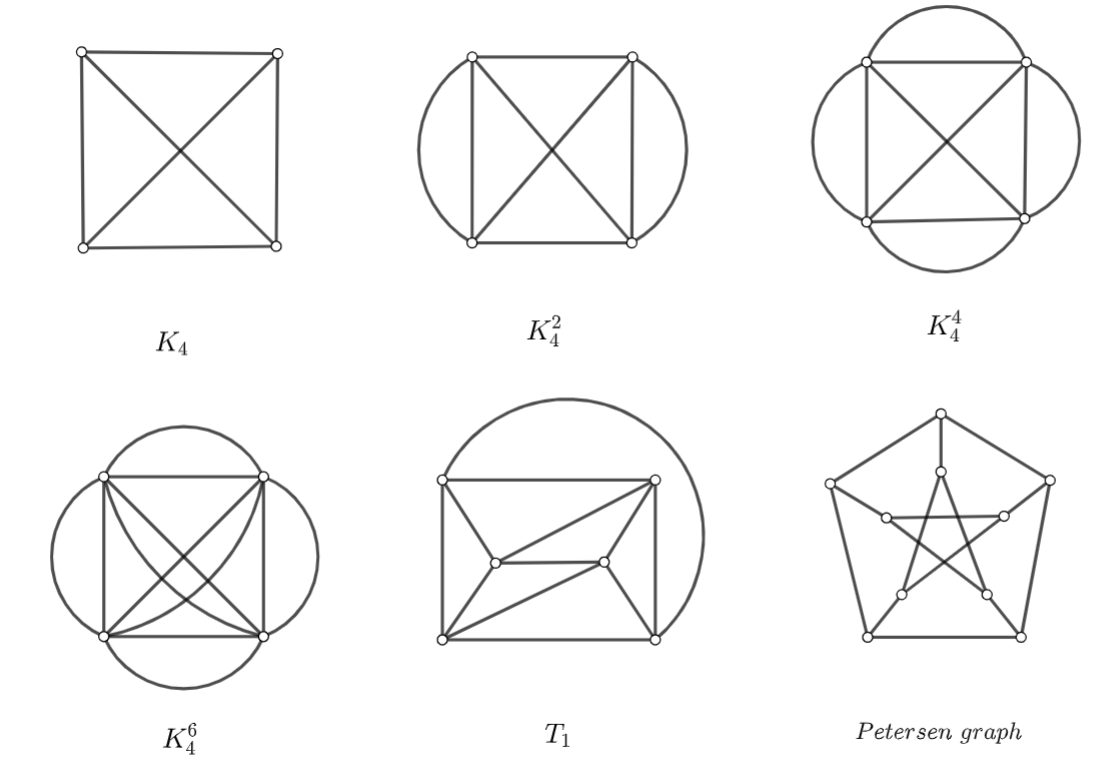}
  \caption{The bricks in $\mathfrak{B}$}\label{T14}
\end{figure}

\begin{figure}[h!]
  \centering
  \includegraphics[width=4.5 in]{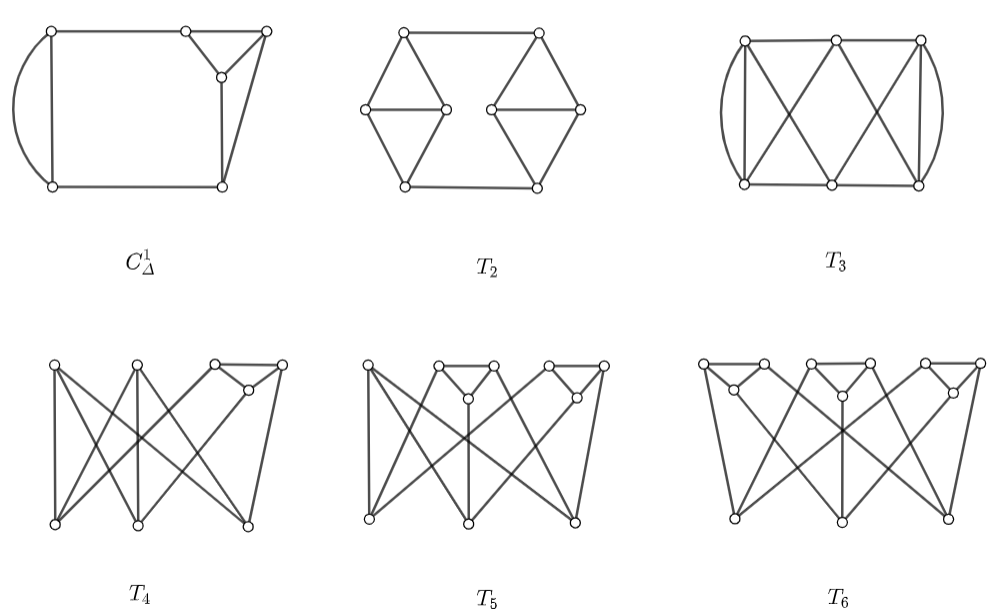}
  \caption{Nonbipartite matching covered graph with a nontrivial cut in $\mathfrak{B}$}\label{$T_{5}-T_{8}$}
\end{figure}

Note that the Petersen graph $P$, $T_{1}$ and $K_{4}^6$ are matching double covered. Then as a corollary of Theorem \ref{brick-B}, we can obtain the following result.

\begin{corollary}\label{TH3.2}
Let $G$ be a brick. Then $G$ is matching double covered if and only if it is the Petersen graph, $T_{1}$ or $K_{4}^6$ (see Figure \ref{T14}).
\end{corollary}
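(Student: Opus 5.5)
Corollary \ref{TH3.2} follows directly from Theorem \ref{brick-B}, so the plan is to filter the list given there. A matching double covered graph is, by definition, connected and has every edge in exactly two perfect matchings. In particular every edge is admissible, so the graph is matching covered, and no edge is a $1$-matching edge. Hence the condition ``no perfect matching contains both a $1$-matching edge and a $2$-matching edge'' holds vacuously, and every matching double covered graph lies in $\mathfrak{B}$. Consequently, if $G$ is a matching double covered brick, Theorem \ref{brick-B} forces $G$ to be one of $K_4$, $K_4^2$, $K_4^4$, the Petersen graph $P$, $T_1$ or $K_4^6$. It remains to decide which of these six graphs are matching double covered.

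For the ``only if'' direction, I would exclude $K_4$, $K_4^2$ and $K_4^4$ by exhibiting in each a $1$-matching edge.
\begin{enumerate}
\item In $K_4$ there are exactly three perfect matchings, and they partition the edge set, so every edge lies in exactly one perfect matching.
\item In $K_4^2$ and $K_4^4$ (read from Figure \ref{T14} as $K_4$ with some pairs of parallel edges added), one of the three pairs of opposite edges of the underlying $K_4$ is not multiplied, or is multiplied in a way that leaves some edge in a single perfect matching. Counting the perfect matchings edge by edge then exhibits a $1$-matching edge.
\end{enumerate}
The only delicate point is reading off the precise multiplicities from the figure. Whatever they are, the count is immediate: a perfect matching of a multigraph on the vertex set of $K_4$ consists of one parallel copy from each of two opposite edge classes. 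So an edge in the class $\{ab\}$ lies in exactly as many perfect matchings as the number of parallel copies in the opposite class $\{cd\}$.

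For the ``if'' direction, I would verify that $P$, $T_1$ and $K_4^6$ are matching double covered.
\begin{enumerate}
\item For the Petersen graph, it has exactly six perfect matchings and $15$ edges. Its automorphism group is edge-transitive, so each edge lies in the same number of perfect matchings, namely $6\cdot 5/15=2$.
\item For $K_4^6$, by the counting rule above, each of the three opposite classes has exactly two parallel copies, so every edge lies in exactly two perfect matchings.
\item For $T_1$, I would directly enumerate its perfect matchings from Figure \ref{T14} and check that each edge appears exactly twice. This is the step I expect to be the only real work, though it is a finite check; if $T_1$ has an edge-transitive or otherwise small symmetric structure, the same averaging argument as for $P$ reduces it to counting the perfect matchings.
\end{enumerate}
Combining both directions yields the corollary.
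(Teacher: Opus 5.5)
Your proposal is correct and is essentially the paper's argument. The paper likewise observes that matching double covered graphs lie in $\mathfrak{B}$ and filters the list of Theorem~\ref{brick-B}, merely asserting that $P$, $T_1$ and $K_4^6$ are matching double covered; your multiplicity-counting rule for the $K_4$-based graphs and your averaging argument for $P$ just make that check explicit.

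For $T_1$ the averaging trick also works. Case~2 of the proof of Theorem~\ref{brick-B} builds $T_1$ as $\overline{C}_6$ plus a perfect matching of non-edges, which is the octahedron $K_{2,2,2}$. It is edge-transitive with $8$ perfect matchings, so each edge lies in $8\cdot 3/12=2$ of them.
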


Lately in Section 4, we focus on characterizing the matching covered graphs with nontrivial tight cuts in matching double covered graphs and the set $\mathfrak{B}$ respectively, and achieve:

\begin{theorem}\label{TH3}
Let $G$ be a nonbipartite matching covered graph with a nontrivial cut. Then $G$ is in $\mathfrak{B}$ if and only if $G$ is $C_{\bigtriangleup}^1$, $T_2$, $T_3$, $T_4$, $T_5$ or $T_6$ (see Figure \ref{$T_{5}-T_{8}$}).
\end{theorem}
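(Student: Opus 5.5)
We read ``nontrivial cut'' in the statement as ``nontrivial tight cut''. The proof will go through the tight cut decomposition (Lemma \ref{L1986}), combined with Theorem \ref{brace-B} and Theorem \ref{brick-B}.

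\textbf{Sufficiency.} This is a direct check. For each of $C_{\bigtriangleup}^1$ and $T_2,\dots,T_6$ we list the perfect matchings, which is feasible since these graphs are small. We then count, for each edge, the perfect matchings containing it, and verify that no perfect matching uses both a $1$-matching edge and a $2$-matching edge.

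\textbf{Necessity.} Let $G\in\mathfrak{B}$ be nonbipartite and let $C=\nabla(X)$ be a nontrivial tight cut, with $C$-contractions $G_1=G\{X;\overline{x}\}$ and $G_2=G\{\overline{X};x\}$.

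1. Count perfect matchings across the cut. Since $C$ is tight, every perfect matching $M$ of $G$ meets $C$ in exactly one edge $e$. Therefore $M$ splits uniquely into a perfect matching of $G_1$ and one of $G_2$, both containing $e$. Conversely, any such pair glues back to a perfect matching of $G$. Writing $m_i(f)$ for the number of perfect matchings of $G_i$ containing $f$:
\begin{itemize}
\item for $e\in C$, the number of perfect matchings of $G$ containing $e$ is $m_1(e)m_2(e)$;
\item for $f\in E(G_1)-C$, it is $\sum_{e\in C\cap\nabla(\overline{x})} m_1(\{f,e\})\,m_2(e)$, where $m_1(\{f,e\})$ counts perfect matchings of $G_1$ containing both $f$ and $e$; edges of $G_2$ are handled symmetrically.
\end{itemize}

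2. Constrain each contraction. Every $f\in E(G)$ lies in at most two perfect matchings, so:
\begin{itemize}
\item for each $e\in C$, one of $m_1(e),m_2(e)$ equals $1$ and the other is $1$ or $2$;
\item each $m_i(f)\le 2$, and moreover, if $m_i(f)=2$ then the cut edges of the corresponding matchings have $m_{3-i}=1$.
\end{itemize}
Using these facts together with the no-mixing condition, we aim to show that each $G_i$ is again a graph all of whose edges are $1$- or $2$-matching edges. The no-mixing condition itself need not pass to $G_i$ exactly, so we expect the weaker property: every edge of $G_i$ lies in one or two perfect matchings, with parallel edges allowed. By induction on $|V(G)|$ we may then assume each $G_i$ is a brick or brace of a restricted type.

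3. Reduce to small pieces. If all edges of $G$ are $1$-matching edges, Lemma \ref{uniqe} forces $G$ to be an even cycle, $K_4$ or $K_2^*$; none of these is nonbipartite with a nontrivial tight cut, so this case does not occur. Otherwise some edge is a $2$-matching edge. Taking a tight cut decomposition of $G$, each brick or brace must satisfy the weak property of step 2. The analogues of Theorem \ref{brace-B} and Theorem \ref{brick-B} for this weak class then restrict the pieces to $K_2^*$-type pieces, $C_4$, $C_4^2$, $C_4^4$ and $K_{3,3}$ (braces), and $K_4$, $K_4^2$, $K_4^4$, $K_4^6$, $P$ and $T_1$ (bricks). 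Since $G$ is nonbipartite, at least one piece is a brick.

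4. Glue. Because products of counts must stay at most $2$, a contraction vertex whose incident edges lie in two perfect matchings can only be glued to a piece whose corresponding edges lie in a single perfect matching. This forbids, for example, splicing two matching double covered bricks, and it bounds the number of splicings. We then enumerate the splicings of a brick with a brace (typically $C_4$, $K_4$ or $C_4^2$ at a vertex) and of two small bricks such as $K_4$ with $K_4$. At each stage we discard any result violating the no-mixing condition, and we aim to match the survivors with $C_{\bigtriangleup}^1$ and $T_2,\dots,T_6$.

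\textbf{Main obstacle.} The hardest part is steps 2--4: controlling how the no-mixing condition and the bound ``at most two'' behave under contraction, and proving that the pieces' edges lie in one or two perfect matchings even when they are not in $\mathfrak{B}$. My first attempt will be to choose $C$ so that one shore $X$ is minimal, making $G_1$ a brick or brace directly. I would then use the product formula at a vertex of the other side to force $G_2$ to be small (an even cycle or $K_4$, by Lemma \ref{uniqe}) whenever $G_1$ contains $2$-matching edges, and finish by a finite case check.
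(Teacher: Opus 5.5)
There is a genuine gap in steps 2--3. Your counting facts in steps 1--2 are correct. The problem is that you give up on the no-mixing condition for the $C$-contractions, and then assert that the analogues of Theorems \ref{brace-B} and \ref{brick-B} for the weaker class (every edge in one or two perfect matchings) yield the same list of pieces. That assertion is false. $\overline{C_6}$ is a $3$-regular brick in which each rung lies in exactly two perfect matchings and each triangle edge in exactly one. $K_4$ with one doubled edge and $C_4$ with one doubled edge are further examples. None of these is on your list. The reason is that the proofs of Theorems \ref{brace-B} and \ref{brick-B} rely on regularity (Lemma \ref{B-regular}) and on excluding $\overline{C_6}$, and both use no-mixing. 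Your induction on $|V(G)|$ also has nothing to invoke, since the theorem concerns $\mathfrak{B}$, not the weak class.

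The missing idea is that no-mixing \emph{does} descend to contractions, as in Lemma \ref{tightcut-i-edge}(3). Suppose a perfect matching $M_1$ of $G_1$ contains a $1$-matching edge $e$ and a $2$-matching edge $f$ of $G_1$. Let $M_2$ be the other perfect matching of $G_1$ containing $f$, and let $a$ be the edge of $M_1\cap C$. If $m_2(a)=1$, with unique matching $N_1$, then $e$ is a $1$-matching edge of $G$. Meanwhile $f$ lies in $M_1\cup N_1$ and in some $M_2\cup N'$, so $M_1\cup N_1$ mixes the two types. If $m_2(a)=2$, then $f$ lies in at least three perfect matchings of $G$. Hence every brick and brace of the decomposition lies in $\mathfrak{B}$. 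Each is therefore regular and comes from the lists of Theorems \ref{brace-B} and \ref{brick-B}, and all of them have the common degree $k$ of $G$.

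Your planned gluing criterion is also wrong. Forcing $G_2$ to be an even cycle or $K_4$ whenever $G_1$ has $2$-matching edges would exclude $T_3$, which is the splicing of $K_4^2$ with $K_4^2$. At every vertex of $K_4^2$ exactly half of the edges are $2$-matching edges. Pairing these with the $1$-matching edges on the other side puts every cut edge in exactly $2\cdot 1$ perfect matchings.

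The correct constraint is the pigeonhole argument of Lemma \ref{C-DMC}. If more than half of the $d$ edges at \emph{both} contraction vertices are $2$-matching edges, then some cut edge is $2$-matching on both sides and lies in four perfect matchings of $G$. And $G_2$ is forced to have only $1$-matching edges only when $G_1$ is matching double covered. Together with regularity, this confines $k$ to $\{3,4\}$ and forces one side of the last tight cut to be $K_4$ or $K_4^2$. What remains is the finite triangle-replacement analysis of Theorems \ref{3-regular} and \ref{4-regular}, and your step 4 leaves that enumeration unexecuted.
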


As an immediate corollary, we can obtain the following result since $T_3$, $T_4$, $T_5$ and $T_6$ are matching double covered.

\begin{corollary}\label{cor3}
Let $G$ be a nonbipartite matching covered graph with a nontrivial cut. Then $G$ is matching double covered if and only if $G$ is $T_3$, $T_4$, $T_5$ or $T_6$ (see Figure \ref{$T_{5}-T_{8}$}).
\end{corollary}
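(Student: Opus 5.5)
The corollary should follow from Theorem \ref{TH3} together with a direct check of the six graphs listed there. The key observation is that a matching double covered graph is connected and every edge is a $2$-matching edge. So it automatically belongs to $\mathfrak{B}$: every edge is a $1$- or $2$-matching edge, and no perfect matching can contain edges of both types, because there are no $1$-matching edges at all. Matching double covered graphs are thus exactly the graphs in $\mathfrak{B}$ with no $1$-matching edges.

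For necessity, let $G$ be a nonbipartite matching covered graph with a nontrivial (tight) cut that is matching double covered. By the observation $G\in\mathfrak{B}$, so Theorem \ref{TH3} gives $G\in\{C_{\bigtriangleup}^1,T_2,T_3,T_4,T_5,T_6\}$. It then remains to exclude $C_{\bigtriangleup}^1$ and $T_2$, which I would do by exhibiting in each a $1$-matching edge. The plan is to take the nontrivial tight cut $C$ and use the fact that every perfect matching uses exactly one edge of $C$. The perfect matchings of $G$ containing an edge $e\in C$ then correspond to pairs of perfect matchings of the two $C$-contractions containing $e$. In these two graphs the $C$-contractions are small pieces, such as $K_4$ or even cycles, in which some edge lies in a unique perfect matching. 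For an edge of $C$ lying in a unique perfect matching of both contractions, the count is $1\cdot 1=1$, so that edge is a $1$-matching edge of $G$, and $G$ is not matching double covered.

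For sufficiency, I would verify that $T_3,T_4,T_5,T_6$ are matching double covered; the paper already asserts this just before the corollary. Concretely, I would apply the same tight-cut product formula to each graph. For each edge $e$, the number of perfect matchings of $G$ containing $e$ is computed from the $C$-contractions: if $e\in C$ it is the product of the counts in the two contractions, and otherwise it is the count of $e$ in its own contraction times the count of the contraction-vertex edge that pairs with it. Checking that each such number equals $2$ is a finite computation.

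The main obstacle is purely bookkeeping: identifying, from Figure \ref{$T_{5}-T_{8}$}, an explicit $1$-matching edge in $C_{\bigtriangleup}^1$ and in $T_2$, and confirming the counts of $2$ in $T_3$ through $T_6$. Everything structural is already supplied by Theorem \ref{TH3}.
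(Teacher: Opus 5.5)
Your proposal is correct and follows the paper's route: a matching double covered graph lies in $\mathfrak{B}$, so Theorem~\ref{TH3} reduces the question to the six listed graphs, and one checks that $T_3$--$T_6$ are matching double covered while $C_{\bigtriangleup}^1$ and $T_2$ are not. The paper leaves the exclusion of $C_{\bigtriangleup}^1$ and $T_2$ implicit, and your $1\cdot 1$ count on a tight-cut edge is the right way to see it.

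One minor correction: for an edge $e$ not in the tight cut, the number of perfect matchings of $G$ containing $e$ is in general a sum, taken over the perfect matchings $M$ of its own contraction containing $e$, of the number of perfect matchings of the other contraction containing the edge of $M\cap C$. It collapses to your single product only because these cut-edge counts happen to be uniform in $T_3$--$T_6$.
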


\section{Bipartite matching covered graphs}

All the graphs discussed in this section are bipartite. We will give the proof of Theorem \ref{brace-B} characterizing the structures of graphs in the set $\mathfrak{B}$. Before proceeding, we first present a property of graphs in $\mathfrak{B}$.

\begin{lemma}\label{B-regular}
If $G\in \mathfrak{B}$, then $G$ is regular.
\end{lemma}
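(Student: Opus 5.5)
The approach is a double count of perfect matchings through the edges at each vertex. Write $\mathcal{M}$ for the set of perfect matchings of $G$, and let $E_1$ and $E_2$ be the sets of $1$-matching and $2$-matching edges. Since $G\in\mathfrak{B}$, every edge lies in $E_1\cup E_2$. Moreover, no perfect matching contains edges of both types. So $\mathcal{M}$ splits into two classes: $\mathcal{M}_1$, the perfect matchings contained in $E_1$, and $\mathcal{M}_2$, those contained in $E_2$.

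The key step is a local count at an arbitrary vertex $v$. Every perfect matching covers $v$ by exactly one edge of $\nabla(v)$. Counting pairs (perfect matching, edge at $v$ in it) gives
\[
|\mathcal{M}| \;=\; \sum_{e\in\nabla(v)} m(e) \;=\; d_1(v)+2d_2(v),
\]
where $m(e)$ is the number of perfect matchings containing $e$, $d_1(v)=|\nabla(v)\cap E_1|$ and $d_2(v)=|\nabla(v)\cap E_2|$. Parallel edges are counted with multiplicity, and loops do not occur.

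Refining by class gives two separate counts:
\[
|\mathcal{M}_1| = d_1(v), \qquad |\mathcal{M}_2| = 2d_2(v).
\]
The reason is that each edge of $E_1$ lies in exactly one perfect matching, which belongs to $\mathcal{M}_1$. Likewise, each edge of $E_2$ lies in exactly two perfect matchings, both in $\mathcal{M}_2$. Also, every matching in $\mathcal{M}_i$ uses exactly one edge at $v$, and that edge lies in $E_i$.

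Since the left-hand sides do not depend on $v$, both $d_1(v)$ and $d_2(v)$ are constant over $V(G)$. Hence
\[
d(v)=d_1(v)+d_2(v)=|\mathcal{M}_1|+\tfrac12|\mathcal{M}_2|
\]
is the same for every vertex, so $G$ is regular. This in fact shows more: the spanning subgraphs $G[E_1]$ and $G[E_2]$ are each regular.

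The only step needing care is the refined count. There one must use the hypothesis that no perfect matching mixes the two edge types, so that the matchings through an edge of $E_i$ are exactly the members of $\mathcal{M}_i$ containing it. The unrefined identity already yields regularity when only one type occurs. I expect no real obstacle beyond stating the double count cleanly and noting that it holds for multigraphs.
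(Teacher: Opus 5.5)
Your proof is correct and is essentially the paper's argument: both count the perfect matchings through the edges at a vertex, which gives $|\mathcal{M}| = d_1(v)+2d_2(v)$ for every $v$. Your refined count $|\mathcal{M}_2| = 2d_2(v)$ supplies exactly the justification the paper leaves implicit when it asserts that $v_2$ having more $2$-matching edges than $v_1$ ``contradicts the fact that $G\in\mathfrak{B}$'', so your direct version is, if anything, cleaner.
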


\begin{proof}
Suppose $G$ is not regular. We can choose $v_1$, $v_2\in V(G)$ such that $d(v_1)>d(v_2)$, where $d(v_1)$ and $d(v_2)$ are degrees of $v_1$ and $v_2$ respectively. Let $\Phi(G)$ be the number of perfect matchings in $G$. Since $G\in \mathfrak{B}$, each edge in $G$ is either a $1$-matching edge or a $2$-matching edge. Hence, $d(v_2)\leq \Phi(G)\leq d(v_1)$. Set $\Phi(G)=d(v_1)+i$ ($0\leq i\leq d(v_1)$).

By the structure of $G$, we can obtain that there exist $i$ $2$-matching edges and $d(v_1)-i$ $1$-matching edges incident with $v_1$, and there exist $d(v_1)+i-d(v_2)$ $2$-matching edges and $2d(v_2)-d(v_1)-i$ $1$-matching edges incident with $v_2$. Here $d(v_1)+i-d(v_2)>i$, which implies the number of $2$-matching edges incident with $v_2$ is strictly larger than the number of $2$-matching edges incident with $v_1$. But this contradicts the fact that $G\in \mathfrak{B}$. Hence $G$ is regular.
\end{proof}

Let $G$ be a bipartite graph. If $|V(G)|=2$,
then it is $K_{2}$ or $K_{2}^*$. Every edge of $K_{2}$ or $K_{2}^*$ is $1$-matching edge.
Therefore, we just consider the graphs with at least four vertices in the sequel. Notice that a graph $G$ is not in $\mathfrak{B}$ if $G$ contains $3^+$-matching edges. Hence we can get three lemmas below which will be used to prove Theorem \ref{TH2.1}.

\begin{lemma}\label{C-3+}
If there exist at least three parallel edges between two adjacent vertices in $C_{2n}^*$ ($n\geq 2$), then $C_{2n}^*$ contains $3^+$-matching edges.
\end{lemma}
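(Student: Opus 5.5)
The idea is to count perfect matchings of $C_{2n}^*$ directly, using the fact that the underlying simple graph is an even cycle. Here $C_{2n}^*$ means the even cycle $C_{2n}$ with some edges replaced by bundles of parallel edges.

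Label the vertices $v_1,v_2,\ldots,v_{2n}$ in cyclic order. For $i=1,\ldots,2n$, let $m_i\ge 1$ be the number of parallel edges joining $v_i$ and $v_{i+1}$ (indices mod $2n$). Since $n\ge 2$, the underlying simple cycle $C_{2n}$ has exactly two perfect matchings: the ``odd'' one $\{v_1v_2,v_3v_4,\ldots\}$ and the ``even'' one $\{v_2v_3,v_4v_5,\ldots\}$. A perfect matching of $C_{2n}^*$ is obtained by choosing one of these two and then, for each pair $v_iv_{i+1}$ it uses, choosing one of the $m_i$ parallel edges. Conversely, every perfect matching arises this way, because its underlying simple edges form a perfect matching of $C_{2n}$.

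Without loss of generality, assume the bundle with $m_1\ge 3$ joins $v_1$ and $v_2$, so it lies in the odd class. Fix an edge $e$ in that bundle. The perfect matchings containing $e$ are exactly the odd-class matchings that select $e$ between $v_1$ and $v_2$. Their number is
\[
\prod_{j\ \mathrm{odd},\, j\neq 1} m_j \;\ge\; 1,
\]
so this gives no bound by itself. The useful count is in the other direction: fix an edge $f$ of the bundle at $v_3v_4$ (again in the odd class; this bundle exists because $n\ge 2$). The odd-class matchings containing $f$ can use any of the $m_1\ge 3$ parallel edges between $v_1$ and $v_2$. Hence $f$ lies in at least
\[
m_1\prod_{j\ \mathrm{odd},\, j\neq 1,3} m_j \;\ge\; 3
\]
perfect matchings, so $f$ is a $3^+$-matching edge, which proves the lemma.

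The only step that needs care is noting that $n\ge 2$ guarantees a second edge $v_3v_4$ in the same matching class as the multiplied bundle. This is exactly why the hypothesis excludes $K_2^*$, where every edge is a $1$-matching edge even with many parallel edges.
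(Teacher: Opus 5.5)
Your proof is correct and follows essentially the paper's argument. The paper takes a perfect matching $M$ of $C_{2n}^*-x-y$ (which exists since $n\ge 2$) and extends it by each of the $t\ge 3$ parallel $xy$-edges, so every edge of $M$ is a $3^+$-matching edge; your edge $f$ at $v_3v_4$ is one edge of such an $M$, and your product count makes the same point. The detour through the edge $e$ in your second paragraph is unnecessary and can be dropped.
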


\begin{proof}
Let $x$ and $y$ be two adjacent vertices of $C_{2n}^*$, and $e_1, e_2, \ldots, e_t$ ($t\geq 3$) be the corresponding parallel edges. Since $n\geq 2$, $C_{2n}^*-x-y$ has a perfect matching $M$. Thus $M\cup e_i$, $i=1, 2, \ldots, t$, are $t$ perfect matchings of $C_{2n}^*$. Hence every edge in $M$ is a $3^+$-matching edge of $C_{2n}^*$.
\end{proof}

\begin{lemma}\label{L-two-cycles}
Let $G$ be a matching double covered graph. If $C_{1}$ and $C_{2}$ are two vertex-disjoint even cycles of $G$, then $G-V(C_{1}\cup C_{2})$ has no perfect matchings.
\end{lemma}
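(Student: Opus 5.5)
Suppose, for a contradiction, that $G-V(C_1\cup C_2)$ has a perfect matching $M_0$. The plan is to combine $M_0$ with perfect matchings of the two cycles and count how many perfect matchings of $G$ contain each edge of $M_0$. An even cycle $C_i$ has exactly two perfect matchings, namely its two alternating edge classes $A_i$ and $B_i$. (If $G$ has parallel edges, a $2$-cycle $C_i$ is still even and the two parallel edges serve as $A_i$ and $B_i$.)

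\textbf{Step 1: construct four perfect matchings.} Since $C_1$, $C_2$ and $G-V(C_1\cup C_2)$ partition $V(G)$, each of the four sets
\[
M_0\cup A_1\cup A_2,\quad M_0\cup A_1\cup B_2,\quad M_0\cup B_1\cup A_2,\quad M_0\cup B_1\cup B_2
\]
is a perfect matching of $G$. These four are pairwise distinct because $A_i\cap B_i=\emptyset$ and both are nonempty.

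\textbf{Step 2: derive the contradiction.} There are two cases.

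\begin{enumerate}
\item If $M_0\neq\emptyset$, pick any $e\in M_0$. Then $e$ lies in all four perfect matchings, so it is a $4^+$-matching edge. This contradicts the assumption that every edge of $G$ is a $2$-matching edge.
\item If $M_0=\emptyset$, then $V(G)=V(C_1)\cup V(C_2)$. Pick any $e\in A_1$. It lies in both $A_1\cup A_2$ and $A_1\cup B_2$. A contradiction here needs a third perfect matching of $G$ containing $e$, which is not automatic.
\end{enumerate}

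The main obstacle is the case $M_0=\emptyset$. If this case is not excluded, the lemma fails: for instance, $C_4^4$ is matching double covered and contains two disjoint $2$-cycles covering all its vertices. I would therefore read the statement as asserting that $G-V(C_1\cup C_2)$ is nonempty and has no perfect matching, or equivalently interpret the empty graph as having no perfect matching here. This matches how the lemma is evidently used later, namely to rule out extra ears attached beyond two cycles. Under that reading, the case $M_0\neq\emptyset$ is the whole proof and Step 1 finishes it.

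If the intended statement really needs the empty case, I would instead argue with $G$ connected, using edges between $C_1$ and $C_2$, to build a third perfect matching through $e$. I expect this to fail in general, as $C_4^4$ shows, so the nonempty reading is the one I would commit to.
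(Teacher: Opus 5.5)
Your proof is correct and is the paper's own argument: the paper's one-line proof, that every edge of a perfect matching $M$ of $G-V(C_1\cup C_2)$ is a $3^+$-matching edge, is exactly your Step 1 with the four matchings $M\cup X_1\cup X_2$, $X_i\in\{A_i,B_i\}$. Your caveat is also well founded. The paper's proof tacitly needs $M\neq\emptyset$, and elsewhere the paper explicitly allows empty perfect matchings. The literal statement fails for $C_4^4$, and also for $K_4^6$, where two disjoint $2$-cycles cover $V(G)$. Every place the argument is used (Lemma~\ref{c2-p2} and Case~1 in the proof of Theorem~\ref{brace-B}) has a nonempty remainder, so your reading is the one the paper actually needs.
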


\begin{proof}
By contradiction and suppose that $G-V(C_{1}\cup C_{2})$ has a perfect matching $M$. Then we can check that every edge in $M$ is a $3^+$-matching edge of $G$, a contradiction.
\end{proof}

From the process of an ear decomposition, we can get a property of matching covered graphs as follows.

\begin{lemma}\label{L-sub-DMC}
Let $G$ be a bipartite matching covered graph, $G'$ be a subgraph of $G$ during the process of an ear decomposition. If there exists a $3^+$-matching edge $e$ in $G'$, then $e$ is also a $3^+$-matching edge in $G$.
\end{lemma}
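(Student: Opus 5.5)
The key fact behind the statement is that every perfect matching of an intermediate graph in an ear decomposition extends to a perfect matching of $G$. Distinct perfect matchings of $G'$ then give distinct perfect matchings of $G$. Fix an ear decomposition $G_1\subset G_2\subset\cdots\subset G_r=G$ of $G$ (by Lemma \ref{L-ear-decom-bipartite} we may take all ears single), and let $G'=G_j$ for some $j$. Let $M_1,M_2,M_3$ be three distinct perfect matchings of $G'$ containing $e$. I will build one fixed matching $N$ of $E(G)\setminus E(G')$ covering exactly $V(G)\setminus V(G')$. Then $M_1\cup N$, $M_2\cup N$ and $M_3\cup N$ are three distinct perfect matchings of $G$ containing $e$, so $e$ is a $3^+$-matching edge of $G$.

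To construct $N$, I will go along the ears $P_j,P_{j+1},\ldots,P_{r-1}$ added after $G'$. Each $P_i$ is an odd path whose internal vertices are new, that is, not in $G_i$. Write $P_i=u_0u_1\cdots u_{2m+1}$ with $u_0,u_{2m+1}\in V(G_i)$. The internal vertices $u_1,\ldots,u_{2m}$ are covered by the matching $\{u_1u_2,u_3u_4,\ldots,u_{2m-1}u_{2m}\}$, which uses only internal edges of the ear. If $m=0$, the ear is a single edge, adds no vertices, and contributes nothing.

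Let $N$ be the union of these internal matchings over all ears after step $j$. The internal vertex sets of different ears are disjoint, and together they are exactly $V(G)\setminus V(G')$. So $N$ is a matching of $G-V(G')$ that covers all its vertices, and it avoids $V(G')$. For any perfect matching $M$ of $G'$, the set $M\cup N$ is therefore a perfect matching of $G$, and $M\mapsto M\cup N$ is injective because $M=(M\cup N)\cap E(G')$.

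I do not expect a real obstacle. The only point needing care is that ``subgraph during the process of an ear decomposition'' means some $G_j$ of a chosen decomposition, so that every remaining vertex is an internal vertex of a later ear. If double ears were allowed, the same argument would apply to each of the two paths separately.
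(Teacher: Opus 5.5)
Your proposal is correct and matches the paper's proof: both extend the three perfect matchings of $G'$ containing $e$ by one fixed perfect matching of $G-V(G')$. The only difference is that you build that matching explicitly from the internal edges of the later ears and check that the three extensions are distinct, whereas the paper simply cites that $G'$ is a nice subgraph of $G$.
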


\begin{proof}
Since $e$ is a $3^+$-matching edge $e$ in $G'$, there exist three perfect matchings $M_{1}$, $M_{2}$ and $M_{3}$ containing $e$. By Lemma \ref{L-ear-decom-bipartite}, $G'$ is a nice subgraph of $G$, there exists a perfect matching $M$ of $G-V(G')$. Thus $M_{1} \cup M$, $M_{2} \cup M$ and $M_{3} \cup M$ are three perfect matchings of $G$ containing $e$, which implies that $e$ is a $3^+$-matching edge in $G$.
\end{proof}

Let $P=v_1e_1v_2\cdots v_ke_{k-1}v_{k}$ be a odd path. The vertices $v_1$ and $v_k$ are called the {\it ends} of $P$. Call an edge $e_i$ is an {\it even or odd edge} of a odd $P$ if $i$ is even or odd. Denote by $N(P)$ and $O(P)$ the set of even edges and odd edges of odd path $P$ respectively. Let $x$ and $y$ be two vertices of a path $P$, we denote by $xPy$ the segment of $P$ from $x$ to $y$. The notation $xPy$ is also used simply to signify a $xy$-path $P$. For a cycle $C$ with $u,v\in V(C)$, we write $uCv$ the segment of $C$ from $u$ to $v$ clockwise.

\begin{lemma}\label{L-two-ears}
Let $G$ be a bipartite matching covered graph which is not a cycle and $|V(G)|\geq4$, there exists an ear decomposition of $G$ such that $G=C+P_1+\cdots+P_r$, $P_{1}$ is a nontrivial ear with ends $x$ and $y$. The following hold:
\vspace{-9pt}
\begin{enumerate}[(\romannumeral1)]
\setlength{\itemsep}{-1ex}
\item the graph $C+P_{1}$ has three different perfect matchings, and the even edges of $P_1$ are $2$-matching edges in $C+P_1$.

If $G$ contains no $3^+$-matching edges, then

\item $P_{2},\ldots,P_{n}$ are all trivial ears with ends are not $x$ and $y$.

\item the ends of $P_{i}$ $(2\leq i\leq n)$ can not both lie in $xCy$ or $yCx$. Moreover, if there exists an ear $P_{i}$ $(2\leq i\leq n)$ with ends lie in $P_{1}$, then $C$ is a $2$-cycle.
\end{enumerate}
\end{lemma}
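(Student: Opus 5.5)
We use the bipartite ear decomposition of Lemma \ref{L-ear-decom-bipartite}. Since $G$ is matching covered and bipartite, its first nontrivial stage is an even cycle $C$. Because $G$ is not a cycle, at least one further single ear is needed, and we take $P_1$ to be the first ear after $C$, with ends $x,y$. Throughout we use Lemma \ref{L-sub-DMC}: a $3^+$-matching edge at any stage of the decomposition remains a $3^+$-matching edge of $G$.

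For (i), write $C=M_1\cup M_2$ as the union of its two perfect matchings. In $C+P_1$ there are three kinds of perfect matchings.
\begin{itemize}
\item Those not using the ends of $P_1$ as matched to $P_1$: they take $N(P_1)$, the even edges, which cover the internal vertices, and restrict to $M_1$ or $M_2$ on $C$. This gives two matchings.
\item Those using $O(P_1)$, the odd edges, which cover $x$, $y$ and all internal vertices. The rest must be a perfect matching of $C-x-y$.
\end{itemize}
Because $C+P_1$ is matching covered (it is a stage of the decomposition), $x$ and $y$ lie in opposite colour classes. Hence the two paths $xCy$ and $yCx$ each have an even number of internal vertices, and $C-x-y$ is a union of two even paths with a unique perfect matching. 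This gives exactly one more matching, so there are three in total. The even edges of $P_1$ lie in exactly the first two, so they are $2$-matching edges. If $P_1$ is a single edge, $N(P_1)$ is empty and the claim is vacuous, but the count of three perfect matchings still holds.

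For (ii), assume $G$ has no $3^+$-matching edges.
\begin{itemize}
\item Suppose some later ear $P_i$ ($i\ge 2$) is nontrivial, i.e.\ has internal vertices. Taking its even edges together with each of the three perfect matchings of $C+P_1$, and arguing as in Lemma \ref{L-sub-DMC}, gives at least three perfect matchings of the current stage containing an even edge of $P_i$. This contradicts Lemma \ref{L-sub-DMC}. If there are several intermediate ears, the number of perfect matchings of the current stage is still at least three, so the argument applies at whatever stage $P_i$ is added.
\item Suppose instead a trivial ear $e$ has an end at $x$, say $e=xz$. Since $z$ has the colour opposite to $x$, it lies on $C$ or in the interior of $P_1$. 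We exhibit enough perfect matchings through $e$ that some edge of $C$ or $P_1$ lies in three perfect matchings. Concretely, the matchings of $C+P_1$ avoiding $x$'s current partners combine with $e$ to create a third matching through an edge that already lay in two.
\end{itemize}

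For (iii), let $e=uv$ be a trivial ear whose ends both lie on $xCy$. The segment $uCv$ together with $e$ forms an even cycle $D$. Then
\begin{itemize}
\item the odd path $P_1$ together with the complementary arc yields a second cycle structure, and
\item the subgraph outside gives a matching that is compatible with both perfect matchings of $D$ and with both orientations around $C\cup P_1$.
\end{itemize}
This produces $3^+$-matching edges, in the spirit of Lemma \ref{L-two-cycles}. Now suppose an ear has both ends on $P_1$. When $|C|\ge 4$, an edge of $C$ away from $x,y$ can be combined with the two new alternating options, giving three matchings. Hence $C$ must be a $2$-cycle, that is, a pair of parallel edges.

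The main obstacle is (iii) and the endpoint case of (ii). These require careful case analysis on the parity positions of the ear ends along $C$ and $P_1$, in order to exhibit explicitly an edge lying in three perfect matchings. I would handle this by fixing the colour classes and enumerating the at most four configurations: both ends on one arc; one end on an arc and one on $P_1$; both ends on $P_1$; and an end at $x$ or $y$.
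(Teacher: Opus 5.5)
Part (i) and the first half of (ii) (no later ear is nontrivial) are fine; your proof that $C+P_1$ has \emph{exactly} three perfect matchings is more careful than needed.

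The real problem is the ``endpoint case'' of (ii). You set out to refute any trivial ear $e=xz$ with an end at $x$, but that statement is false, so no case analysis will prove it. Take vertices $x,y,z,w$, two parallel $xy$-edges, two parallel $xz$-edges, and the edges $zw$ and $wy$.
This graph is bipartite (classes $\{x,w\}$ and $\{y,z\}$), matching covered, and not a cycle. Its only perfect matchings pair an $xy$-edge with $zw$ or an $xz$-edge with $wy$, so no edge lies in three of them.
Since $|V(G)|=4$, a nontrivial $P_1$ forces $C$ to be one of the two parallel pairs. In either choice the remaining ear is the second copy of an edge at $x$, and $x$ is an end of $P_1$.
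What the lemma asserts is only that the \emph{pair} of ends is not $\{x,y\}$. This is also how it is used later: ``$\{u,v\}\neq\{x,y\}$'' in Lemma~\ref{c2-p2}, and ``$u$ may be $x$'' in the proof of (iii). That weaker claim has a one-line proof. If $P_j$ joins $x$ and $y$, then $P_j\cup N(P_1)\cup N(xCy)\cup N(yCx)$ is a perfect matching of $G_j$ that is not one of $C+P_1$. So the even edges of $P_1$, already $2$-matching in $C+P_1$ by (i), lie in three perfect matchings, contradicting Lemma~\ref{L-sub-DMC}.

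Two further steps are missing.

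\textbf{Nontriviality of $P_1$.} The lemma asserts a decomposition with $P_1$ \emph{nontrivial}, and (ii)--(iii) need $N(P_1)\neq\emptyset$. Taking $P_1$ to be the first ear after $C$ can give a chord (e.g.\ $C$ a $6$-cycle of $K_{3,3}$), a case you explicitly allow. You must re-choose the decomposition.
If $|C|\ge 4$ and the next ear $Q$ joins $x$ and $y$, then $C+Q$ consists of three internally disjoint odd $xy$-paths, one of length at least $3$. Declare that path to be $P_1$ and the other two the initial cycle.
If $|C|=2$, the ears before the first nontrivial one are parallel edges, so that ear can be moved up.

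\textbf{Part (iii).} This part is only announced, and the two-disjoint-cycles heuristic you point to (in the spirit of Lemma~\ref{L-two-cycles}) fails in general. Suppose $u,v$ are interior to $xCy$ in the order $x,u,v,y$, with $u$ in the colour class of $x$. Removing $uCv+uv$ and $P_1\cup yCx$ leaves the interiors of $xCu$ and $vCy$, both of odd order. When $u=x$ the two cycles are not even disjoint.
The argument that works is again a single new perfect matching. The edge $uv$, the set $N(P_1)$, and a perfect matching of $C-u-v$ (which exists because $u,v$ have opposite colours) form a perfect matching of $G_i$ not in $C+P_1$ that contains $N(P_1)$.
Likewise, for ends on $P_1$ with $|C|\ge 4$, pick an arc, say $xCy$, of length at least $3$. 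Construct, separately for the two possible orders of $u,v$ along $P_1$, a perfect matching of $G_i$ containing $P_i$ and $N(xCy)$. Since $N(xCy)$ is already $2$-matching in $C+P_1$, this gives the third matching.
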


\begin{proof}
By Lemma \ref{L-ear-decom-bipartite}, $G$ has an ear decomposition such that $G=K_2+Q_1+\cdots+Q_{r+1}$, where $Q_i$ are ears of $G$. Then $K_2+Q_1$ is an even cycle, denoted by $C$. Note that $|V(G)|\geq 4$. If $|V(C)|=2$, then there exists a nontrivial ear $Q_j$ $(2\leq j\leq r+1)$. Set $P_1=Q_j$, $P_i=Q_i$ for $2\leq i\leq j-1$, $P_k=Q_{k+1}$ for $j\leq k\leq r$. Then $G$ has an ear decomposition such that $G=C+P_1+P_2\cdots+P_r$, and $P_1$ is a nontrivial ear. If $|V(C)|\geq 4$, the ends of $Q_{1}$ are $x$ and $y$, then $C$ can be divided into two odd paths $xCy$ and $yCx$ by $x$ and $y$, and one of these paths has length at least three. Without loss of generality, suppose that the length of $xCy$ is at least three. Change the cycle $K_2+Q_1$ into $K_2+Q_2$. Set $P_1=Q_1$, $P_i=Q_{i+1}$ for $2\leq i\leq r$. Thus the ear decomposition of $G=C+P_1+P_2+\cdots+P_r$ is the desired ear decomposition, where $P_1$ is a nontrivial ear.

In $C+P_1$, the odd edges of $xCy$, and the even edges of $yCx$ and $P_{1}$ consist a perfect matching $M_{1}$; the even edges of $xCy$ and $P_{1}$, the odd edges of $yCx$ consist a perfect matching $M_{2}$; the even edges of $xCy$ and $yCx$, the odd edges of $P_{1}$ consist a perfect matching $M_{3}$. Therefore $C+P_{1}$ has three different perfect matchings, and the even edges of $P_1$ are $2$-matching edges in $C+P_1$. Hence (i) holds.

In the sequel we consider the case that $G$ contains no $3^+$-matching edges. Let $G_{i}=C+P_{1}+\cdots+P_{i}$, where $1\leq i\leq r$. Suppose that there exist nontrivial ears in $P_{2},\ldots,P_{n}$ and $P_{i}$ $(2\leq i\leq n)$ is the first nontrivial ear. Recall that $N(P_{i})$ is the set of even edges of $P_{i}$. By the analysis of $(i)$, $G_1=C+P_1$ has three different perfect matchings $M_{1}$, $M_{2}$, and $M_{3}$. Then $N(P_{i})\cup M_{1}$, $N(P_{i})\cup M_{2}$, $N(P_{i})\cup M_{3}$ are three different perfect matchings of $G_{i}$, therefore the edges of $N(P_{i})$ are $3^+$-matching edges in $G_{i}$ and thus are $3^+$-matching edges in $G$ by Lemma \ref{L-sub-DMC}, a contradiction. Hence $P_{2},\ldots,P_{n}$ are all trivial ears.

Suppose that there exists a ear $P_j$ $(2\leq j\leq n)$ with ends are $x$ and $y$, then the perfect matchings of $G_{1}=C+P_{1}$ are also the perfect matchings of $G_{j}$ $(j>1)$. Since $G_j$ is matching covered, there exists a perfect matching containing $P_{j}$ and the even edges of $P_{1}$ in $G_j$. By $(i)$, the even edges of $P_{1}$ are $2$-matching edges in $G_{1}$, hence these edges are $3^+$-matching edges in $G_{i}$, which implies that $G$ has a $3^+$-matching edge by Lemma \ref{L-sub-DMC}, a contradiction. Thus $P_{2},\ldots,P_{n}$ are all trivial ears with ends are not $x$ and $y$, which implies (ii) holds.

Let the ends of $P_{i}$ $(2\leq i\leq n)$ be $u$ and $v$. Suppose without loss of generality that $u$ and $x$ are in the same part, $v$ and $y$ are in the other part in $G$. Since $G$ contains no $3^+$-matching edges, $P_{2},\ldots,P_{n}$ are trivial ears with ends not $x$ and $y$ by (ii), the perfect matchings of $G_{1}=C+P_{1}$ are also the perfect matchings of $G_{j}=C+P_1+\cdots+P_j$ $(j>1)$. We first prove that the ends of $P_{i}$ $(2\leq i\leq n)$ can not lie in $xCy$ or $yCx$.  Without loss of generality, suppose that the ends of $P_{i}$ lie in $xCy$, then the length of $xCy$ is at least $3$. Thus $P_i=uv$, the even edges of $P_1$ , and a perfect matching of $C-\{u,v\}$ consist of a perfect matching $M'$ of $G_i$. Note that $M'$ is not a perfect matching of $G_1$ and the even edges of $P_1$ are $2$-matching in $G_1$ by (i). Hence the even edges of $P_1$ are $3^+$-matching edges in $G_i$, which deduces $G$ contains a $3^+$-matching edge by Lemma \ref{L-sub-DMC}, a contradiction.

Then we consider the case that the ends $u$ and $v$ of $P_{i}$ $(2\leq i\leq n)$ lie in $P_{1}$. Suppose by contradiction that $C$ is not a $2$-cycle. Then there is at least one path, say, $xCy$ with length is not less than three. If $v$ and $u$ appear successively on $xP_{1}y$, then $v$ and  $u$ are the inner vertices of $xP_{1}y$. Denote by $C_{i}$ the cycle $vP_{1}uP_{i}v$, then $G_{i}-V(C\cup C_{i})$ has a perfect matching $N$ (may be an empty set). Since $P_{i}$ is an edge in $C_{i}$, let $N_1$ be the perfect matching of $C_{i}$ containing $P_{i}$. Let $N_{2}$ be the perfect matching of $C$ containing the even edges of $xCy$. So $M''$=$N_{1}\cup N_{2}\cup N$ is a perfect matching of $G_{i}$ containing $P_{i}$ and even edges of $xCy$. If $u$ and $v$ appear successively on $xP_{1}y$ (at this time $u$ may be same as $x$, $v$ may be same as $y$, but $\{u,v\}\neq \{x,y\}$), then $P_{i}$, the even edges of $uP_{1}x$, $uP_{1}v$, $vP_{1}y$, $xCy$ and the odd edges of $yCx$ constitute a perfect matching $M'''$ of $G_{i}$. One can check that $M''$ and $M'''$ are not perfect matchings of $G_1$ and both contain the even edges of $xCy$. Together with the fact that the even edges of $xCy$ are $2$-matching in $G_1$, we obtain the even edges of of $xCy$ are $3^+$-matching edges in $G_i$, which deduces $G$ contains a $3^+$-matching edge by Lemma \ref{L-sub-DMC}, a contradiction. Hence we get $C$ is a $2$-cycle. Thus (iii) is proved.
\end{proof}


\begin{lemma}\label{c2-p2}
Let $G$ be a bipartite matching covered graph with no $3^+$-matching edges. Suppose that $G$ has an ear decomposition $G=C+P_1+\cdots+P_r$ such that $C$ is a 2-cycle, $P_1$ is a nontrivial ear, $P_i$ are trivial ears, $2\leq i\leq n$. If the length of $P_1$ is at least  $5$, then the ends of $P_i$ ($2\leq i\leq r$) are same to the ends of some odd edge of $P_1$.
\end{lemma}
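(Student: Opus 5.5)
Set $C=xy$ as a $2$-cycle with parallel edges $f_1,f_2$ joining $x$ and $y$, and write $P_1=xe_1v_2e_2\cdots e_mv_{m+1}$ with $v_1=x$, $v_{m+1}=y$, $m$ odd and $m\ge 5$. Let $G_1=C+P_1$. The plan is to show that if a trivial ear $P_i=uv$ ($i\ge2$) joins two vertices of $P_1$ that are not the ends of an odd edge, then some edge of $G_i=G_1+P_2+\cdots+P_i$ lies in at least three perfect matchings. Lemma \ref{L-sub-DMC} then gives a $3^+$-matching edge of $G$, a contradiction.

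\textbf{Setup.} By Lemma \ref{L-two-ears}(ii), $u,v\notin\{x,y\}$ as a pair, so the ends are not both $x$ and $y$. Since $V(G)=V(G_1)$, both ends lie on $P_1$. As $G$ is bipartite and $x,y$ lie in different colour classes, $u=v_a$ and $v=v_b$ with $a<b$ and $b-a$ odd. If $b-a=1$, then $uv$ is parallel to $e_a$. We must exclude the case where $e_a$ is an even edge, and also every case with $b-a\ge3$. The perfect matchings of $G_1$ are exactly $M_1=\{f_1\}\cup N(P_1)$, $M_2=\{f_2\}\cup N(P_1)$ and $M_3=O(P_1)$. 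Thus each even edge of $P_1$ is already a $2$-matching edge in $G_1$. It therefore suffices to produce one further perfect matching of $G_i$ that uses $uv$ and some even edge of $P_1$.

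\textbf{Case $b-a\ge3$.} Match $uv$. The segment $v_{a+1}P_1v_{b-1}$ has an even number of vertices, so it is matched by its own edges starting with $e_{a+1}$; these are even edges of $P_1$ when $a$ is even. The remaining vertices form the path $v_{b+1}\cdots y\,x\,v_2\cdots v_{a-1}$ through $f_1$. It has an even number of vertices, since the total is even, and so admits a perfect matching. The interior segment contains $e_{a+1}$ and has parity structure fixed by $a$. When $a$ is odd, use the outer path instead. Because $m\ge5$, one of the two pieces has length at least $3$, or the outer path contains an even edge of $P_1$. Choosing the matching on that piece that uses even edges of $P_1$ yields a perfect matching outside $\{M_1,M_2,M_3\}$ that contains an even edge of $P_1$. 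That edge is then a $3^+$-matching edge.

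\textbf{Case $b-a=1$ with $e_a$ even.} Take $M_1$ with $e_a$ replaced by the parallel edge $uv$. This perfect matching contains all other even edges of $P_1$, and such edges exist since $m\ge5$. So each of those edges lies in $M_1$, $M_2$ and this new matching, which gives a $3^+$-matching edge.

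\textbf{Main obstacle.} The main obstacle is the bookkeeping in the case $b-a\ge3$. One must check in every parity configuration that the matching built contains an even edge of $P_1$ and is genuinely new. If needed, I would instead use $f_1$ versus $f_2$ to obtain two new matchings sharing an edge. That edge, being an edge of $P_1$ lying in one of $M_1,M_2,M_3$, becomes a $3^+$-matching edge. The hypothesis $m\ge5$ is exactly what guarantees such a shared edge exists.
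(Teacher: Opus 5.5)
Your setup and your treatment of the case $b-a=1$ with $e_a$ even are correct: $M_1-e_a+uv$, $M_1$ and $M_2$ all contain the remaining even edges of $P_1$, which exist since $m\ge5$. The gap is in the case $b-a\ge3$.

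First, the parity there is reversed. The interior segment $v_{a+1}P_1v_{b-1}$ is matched by $e_{a+1},e_{a+3},\dots,e_{b-2}$. These are even edges of $P_1$ when $a$ is odd, and odd edges when $a$ is even. More importantly, nothing can be ``chosen''. In $G_1+uv$ (which is $G_2$ when $i=2$), once $uv$ is used every other vertex is forced, except that $x$ and $y$ may be joined by $f_1$ or by $f_2$. For $a$ even, the perfect matchings containing $uv$ are exactly
\[
\{uv,f_j\}\cup\{e_2,e_4,\dots,e_{a-2}\}\cup\{e_{a+1},e_{a+3},\dots,e_{b-2}\}\cup\{e_{b+1},e_{b+3},\dots,e_{m-1}\},\qquad j=1,2.
\]
When $u=v_2$ and $v=v_m$ (e.g.\ $m=5$, $uv=v_2v_5$), these contain no even edge of $P_1$ at all. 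So the claim your argument rests on, that a new perfect matching through $uv$ and an even edge of $P_1$ always exists, is false. The hypothesis $m\ge5$ does not rescue it.

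The missing step is an explicit split by the parity of $a$, which is what the paper does.
\begin{itemize}
\item If $a$ is odd (the end coloured like $x$ comes first), the unique perfect matching through $uv$ is $\{uv\}\cup\{e_1,e_3,\dots,e_{a-2}\}\cup\{e_{a+1},e_{a+3},\dots,e_{b-2}\}\cup\{e_{b+1},e_{b+3},\dots,e_m\}$. It contains the even edge $e_{a+1}$, so together with $M_1$ and $M_2$ you get three matchings.
\item If $a$ is even, the two matchings displayed above both contain the odd edge $e_{a+1}$, which is present because $b-a\ge3$. This edge also lies in $M_3$, giving three matchings.
\end{itemize}
The second device is your ``$f_1$ versus $f_2$'' fallback, but it cannot replace the first. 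When $a$ is odd, neither $f_1$ nor $f_2$ can be used alongside $uv$, so only one new matching exists. Your main route has the wrong parity, and the fallback is left as an ``if needed'' option not tied to any case. As written, the case $b-a\ge3$ is therefore not proved.
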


\begin{proof}
Let $x$ and $y$ be the ends of $P_1$, $u$ and $v$ be the ends of $P_i$ ($2\leq i\leq r$), where $u$ and $x$ are in the same part. Set $G_{i}=C+P_{1}+\cdots+P_{i}$ $(1\leq i\leq n)$.
From Lemma \ref{L-two-ears}, we know that $\{x,y\}\neq \{u,v\}$. Since $C$ is a $2$-cycle, and $P_{1}$ is a nontrivial ear, the ends $u$ and $v$ of $P_{i}$ falls on $P_{1}$ at the same time.

Suppose $u$ and $v$ are not adjacent in $P_{1}$. If $v$ and $u$ appear successively in $xP_{1}y$, then $v$ and $u$ are inner vertices of $P_{1}$. Denote by $C_{i}$ the cycle $vP_{1}uP_{i}v$, then $G_{i}-V(C\cup C_{i})$ has a perfect matching $N$. Let $N_{1}$ be the perfect matching of $C_{i}$ containing $P_{i}$ and the even edges of $vP_{1}u$. There are two disjoint perfect matchings $N_{2}$ and $N_{3}$ of $C$, so $N_{1}\cup N_{2}\cup N$ and $N_{1}\cup N_{3}\cup N$ are two perfect matchings of $G_{i}$ containing $P_{i}$ and the even edges $vP_{1}u$. Here the even edges of $vP_{1}u$, as the odd edges of $P_{1}$, are $1$-matching edges of $G_{1}$, thus are $3^+$-matching edges of $G_{i}$. Then by Lemma \ref{L-sub-DMC}, $G$ has $3^+$-matching edges, a contradiction. If $u$ and $v$ appear successively in $xP_{1}y$ ($u$ may be $x$, $v$ may be $y$, but $\{u,v\}\neq \{x,y\}$), then $P_{i}$, the even edges of $uP_{1}x$, $uP_{1}v$ and $vP_{1}y$ form a perfect matching of $G_{i}$. Here the even edges of $uP_{1}v$, as the even edges of $P_{1}$, are the $2$-matching edges of $G_{1}$, thus are $3^+$-matching edges of $G_{i}$, which implies $G$ contains $3^+$-matching edges by Lemma \ref{L-sub-DMC}, a contradiction. Therefore, $u$ and $v$ are two adjacent vertices in $P_{1}$.

Suppose now that $u$ and $v$ are the ends of an even edge $e$ of $P_{1}$. Let  $C_{1}$ be the cycle composed of $P_{i}$ and this even edge. Since $|P_{1}|\geq 5$, $G_{i}-V(C\cup C_{1})$ has a perfect matching $N$. By Lemma \ref{L-two-cycles}, the edges in $N$ are $3^+$-matching edges of $G_{i}$, which deduces that $G$ has $3^+$-matching edges by Lemma \ref{L-sub-DMC}, a contradict. Hence the ends $u$ and $v$ of $P_{i}$ are the ends of some odd edge of $P_{1}$. The result holds.
\end{proof}


Now we present the proof of Theorem \ref{brace-B} using the above results.

{\bf Proof of Theorem \ref{brace-B}.} By definition, one can check that $K_{2}^*$, $C_{4}^2$, $C_{4}^4$, $K_{3,3}$ or $C_{2n}$ are bipartite graphs in $\mathfrak{B}$. Hence the sufficiency follows. We now establish the necessity.  Suppose that $G$ is a bipartite graph in $\mathfrak{B}$. Then $G$ is a bipartite matching covered graph and contains no $3^+$-matching edges. Through these structural properties, one can check that then $G$ is $K_{2}^*$ if $|V(G)|=2$, and $G$ is $C_4$, $C_{4}^2$ or $C_{4}^4$ if $|V(G)|=4$. If $|V(G)|=2n\geq 6$, obviously, $G$ may be $C_{2n}$. If $G\ncong C_{2n}$, then by Lemma \ref{L-two-ears}, there exists an ear decomposition of $G$ such that $G=C+P_1+\cdots+P_r$, where $r\geq 1$ and $P_{1}$ is a nontrivial ear with ends $x$ and $y$, $P_2,\ldots, P_r$ are trivial ears with ends are not $\{x,y\}$. Let $G_i=C+P_1+\cdots+P_i$. By Lemma \ref{L-two-ears} (iii), the ends of $P_{i}$ $(2\leq i\leq n)$ can not both lie in $xCy$ or $yCx$. With regard to the positions of the ends of $P_2$, we consider the following two cases.

{\bf Case 1.} The ends of $P_ {2}$ are in $P_ {1}$. By Lemma \ref{L-two-ears} (iii), the $C$ is a $2$-cycle, $P_2, \ldots, P_r$ are trivial ears with ends in $P_1$. Since $|V(G)|\geq 6$, the length of $P_1$ is at least $5$. Hence by Lemma \ref{c2-p2}, the ends $u$ and $v$ of $P_{2}$ are the ends of a odd edge $e$ of $P_{1}$, i.e., $e=uv$. Note that $G_2$ is not regular, then $G_2\notin \mathfrak{B}$ by Lemma \ref{B-regular}. Hence $G\neq G_{2}$, $P_{3}$ exists. From Lemmas \ref{C-3+} and \ref{c2-p2}, the ends of $P_{3}$ are the ends of a odd edge $f$ of $P_{1}$ and $e\neq f$. Denote by $C_{1}$ the cycle formed by $P_{2}$ and $e$, and $C_{2}$ the cycle formed by $P_{3}$ and $f$. So $V(C_{1})\cap V(C_{2})=\emptyset$. Since $|V(G)|\geq 6$, $G_{3}-V(C_{1}\cup C_{2})$ has a perfect matching $N$. Then edges in $N$ are $3^+$-matching edges of $G_{3}$, so are $3^+$-matching edges of $G$  by Lemma \ref{L-sub-DMC}, a contradiction.

{\bf Case 2.} The ends of $P_{2}$ are in two of paths $P_{1}$, $xCy$, $yCx$ respectively. For the convenience of present the proof, we color the vertices in two parts of $G$ with black and white respectively such that $x$ has color black and $y$ has color white. Due to symmetry, we may as well suppose that the ends $u$ and $v$ of $P_{2}$ are in $P_{1}$ and $xCy$, say $u$ is black in $xCy$ and $v$ is white in $P_{1}$. Obviously $v$ is a inner vertex of $P_{1}$, and $u$ is a inner vertex of $xCy$. Since $x$ and $u$ are black, and $y$ and $v$ are white, the length of $xCu$ is at least $2$, and the length of $yP_{1}v$ is at least $2$. Suppose that there is at least one path, say $uCy$, in $uCy$, $yCx$ and $vP_{1}x$ whose length is at least $3$. Then there is a perfect matching of $G_{2}$ that contains $P_{2}$ and even edges of $uCy$. Here the even edges of $uCy$ are $2$-matching edges of $G_{1}$, so are $3^+$-matching edges of $G_{2}$. From Lemma \ref{L-sub-DMC}, $G$ also contains $3^+$-matching edges, a contradiction. So the length of $uCy$, $yCx$ and $vP_{1}x$ are all one. Meanwhile, $G_2$ is not regular. Thus $G_2\notin \mathfrak{B}$ by Lemma \ref{B-regular}, which implies $G\neq G_{2}$ and $P_{3}$ exists.

By Lemma \ref{L-two-ears} (iii), the ends of $P_{3}$, say $a$ and $b$, are inner vertices of $xCy$ and $P_{1}$ respectively. Without loss of generality, suppose that $a$ is in $xCy$ and $b$ is in $P_{1}$. If $a$ is black and $b$ is white ( $a$ may be $u$, and $b$ may be $v$), then the length of $bP_{1}y$ is at least $2$. Since there is a perfect matching of $G_{3}$ that contains $P_{3}$ and even edges of $bP_{1}y$, while those even edges are $2$-matching edges of $G_{2}$, so are $3^+$-matching edges of $G_{3}$ and $G$ by Lemma \ref{L-sub-DMC}, a contradiction.

If $a$ is white and $b$ is black, then the length of $bP_{1}x$ and $aCy$ are at least $2$. Suppose there is at least one path in $xCa$, $aCu$ and $bP_{1}y$ whose length is at least $3$, say, $xCa$ is that path. Then there is a perfect matching of $G_3$ that contains $P_{3}$ and the even edges of $xCa$. Here the even edges of $xCa$ are $2$-matching edges of $G_{1}$, so are the $3^+$-matching edges of $G_{3}$ and $G$, a contradiction. Hence the lengths of $xCa$, $aCu$ and $bP_{1}y$ are all one. If the length of $bP_{1}v$ is at least $3$, then there is a perfect matching of $G_{3}$ that contains $P_{3}$ and the even edges of $bP_{1}v$. Here the even edges of $bP_{1}v$ are $2$-matching edges of $G_{2}$, so are $3^+$-matching edges of $G_{3}$ and $G$, a contradiction. Therefore, $a$ and $v$ are adjacent in $P_{1}$. Therefore $G_{3}=K_{3,3}$.  Note that $K_{3,3}$ is matching double covered. Then $G$ maybe $K_{3,3}$.

If $G\neq K_{3,3}$, then $P_{4}$ exists, $G_{4}=K_{3,3}+P_{4}$. Let $V(K_{3,3})=\{x,y,u,v,a,b\}$. Since $K_{3,3}$ is complete, the ends of trivial ear $P_{4}$ are adjacent vertices in $K_{3,3}$, say, $u$ and $v$. Then the edge $ab$ is a $2$-matching edge of $K_{3,3}$, so it is a $3^+$-matching edge of $G_{4}$ and $G$ by Lemma \ref{L-sub-DMC}, a contradiction. Hence $P_{4}$ does not exist, $G=K_{3,3}$.

Combining the above cases, we can obtain that $G$ is $K_{2}^*$, $C_{4}^2$, $C_{4}^4$, $K_{3,3}$ or $C_{2n}$ if $G$ is bipartite in $\mathfrak{B}$. The proof is complete. \hfill $\Box$.

\section{Nonbipartite matching covered graphs}

\subsection{Bricks in $\mathfrak{B}$}
Let $H$ be a graph and let $x$ be a vertex of $H$. A graph $H':=H\{x\longrightarrow(b_1,b_2,\ldots,b_k)\}$ is said to be obtained from $H$ by {\it splitting $x$ into $b_1,b_2,\ldots,b_k$} if:

$(i)$ $V(H')=V(H-x)\cup\{b_1,b_2,\ldots,b_k\}$,

$(ii)$ $E(H')=E(H)$,

$(iii)$ $H'-\{b_1,b_2,\ldots,b_k\}=H-x$, and

$(v)$ every edge of $H$ that joins a vertex $v\in V(H-x)$ to $x$ in $H$ joins $v$ to one of the vertices $b_1, b_2,\ldots,b_k)$ in $H'$ (Thus ($\nabla_{H'}(b_1),\nabla_{H'}(b_2),\ldots,\nabla_{H'}(b_k))$ is a partition of $\nabla_{H}(x)$.)

Let us now introduce the four operations on bricks that can be used to generate all bricks from the three basic bricks (the complete graph $K_4$, the complement $\overline{C_6}$ of the 6-cycle and the Petersen graph $P$).

{\it Edge addition}: Let $H$ be a brick and let $x$ and $y$ be two distinct vertices of $H$. Obtain $G$ from $H$ by adding a new edge joining $x$ and $y$.

We shall refer to the following three operations as {\it vertex expansions}.

{\it Expansion of a vertex by a barrier of size two}: Let $H$ be a brick and let $x$ be a vertex of $H$ whose degree is at least four. Let $H':= H\{x\rightarrow (b_{1},b_{2})\}$ such that, in the underlying simple graph of $H'$, $b_{1}$ and $b_{2}$ have degree at least two. Now obtain $G$ from $H'$ by adding a new vertex $a$ and joining it to $b_{1}$ and $b_{2}$ and to a vertex $w$ of $H-x$ by an edge labelled $e$ (see Figure \ref{op2}). We shall refer to the graph $G$ thus constructed as a graph obtained from $H$ by {\it an expansion of $x$ by a barrier of size two}.

\vspace{2mm}\begin{figure}[h!]
  \centering
  \includegraphics[width=4 in]{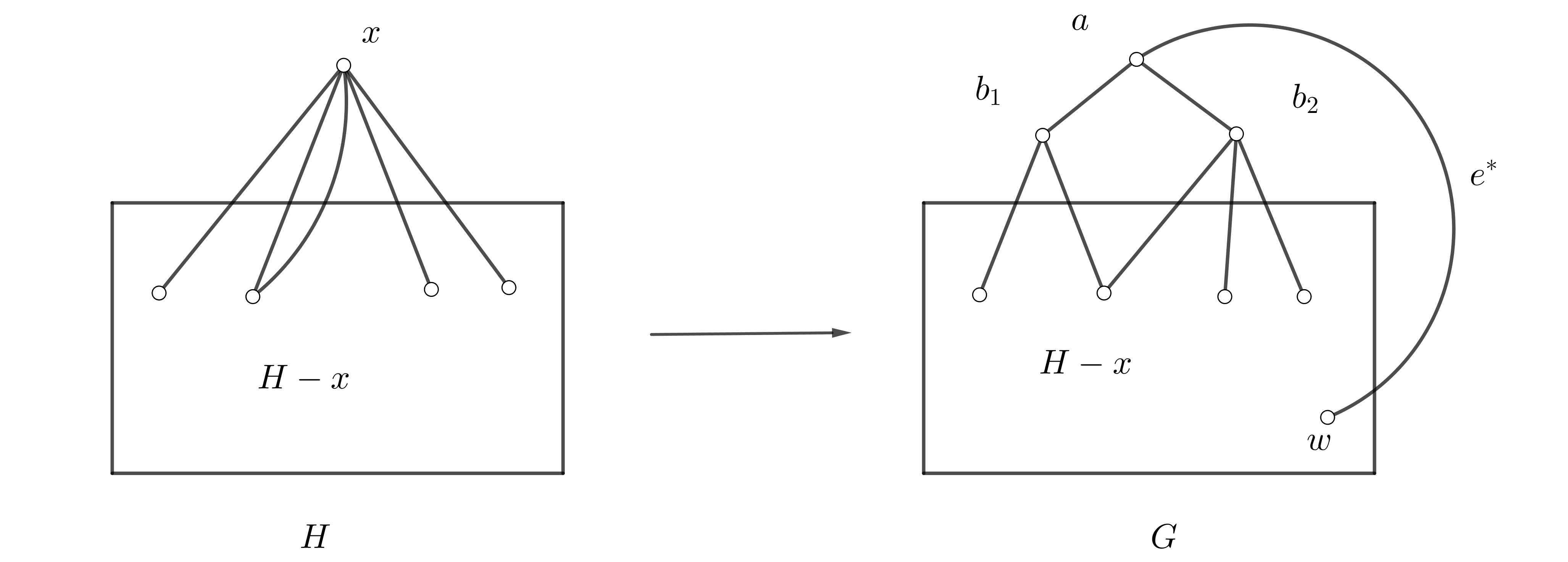}
   \vspace{-0.3cm}
  \caption{Expansion of a vertex by a barrier of size two}\label{op2}
\end{figure}

{\it Expansion of a vertex by a barrier of size three}:
Let $H$ be a brick and let $x$ be a vertex of $H$ whose degree is at least five. Let $H':=H\{x\rightarrow (b_{1},b_{2},b_{3})\}$ such that, in the underlying simple graph of $H'$, the degrees of $b_{1}$ and $b_{3}$ are at least two and the degree of $b_{2}$ is at least one. Now obtain $G$ from $H'$ by adding two new vertices $a_{1}$ and $a_{2}$, and joining $a_{1}$ to $b_{1}$ and $b_{2}$, joining $a_{2}$ to $b_{2}$ and $b_{3}$, and joining  $a_{1}$ and $a_{2}$ by an edge labelled $e$ (see Figure \ref{op3}). We shall refer to the graph $G$ thus constructed as a graph obtained from $H$ by {\it an expansion of $x$ by a barrier of size three}.

\begin{figure}[h!]
  \centering
  \includegraphics[width=4 in]{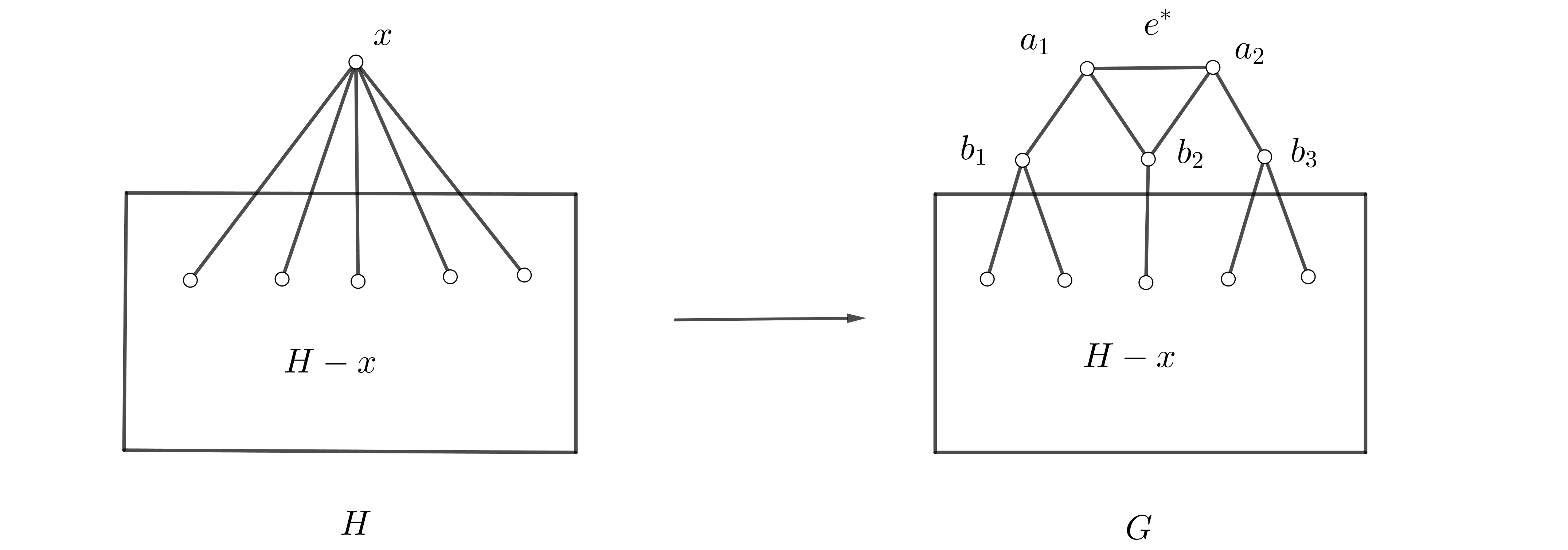}
    \vspace{-0.3cm}
  \caption{Expansion of $x$ by a barrier of size three}\label{op3}
\end{figure}

\begin{figure}[h!]
  \centering
  \includegraphics[width=4 in]{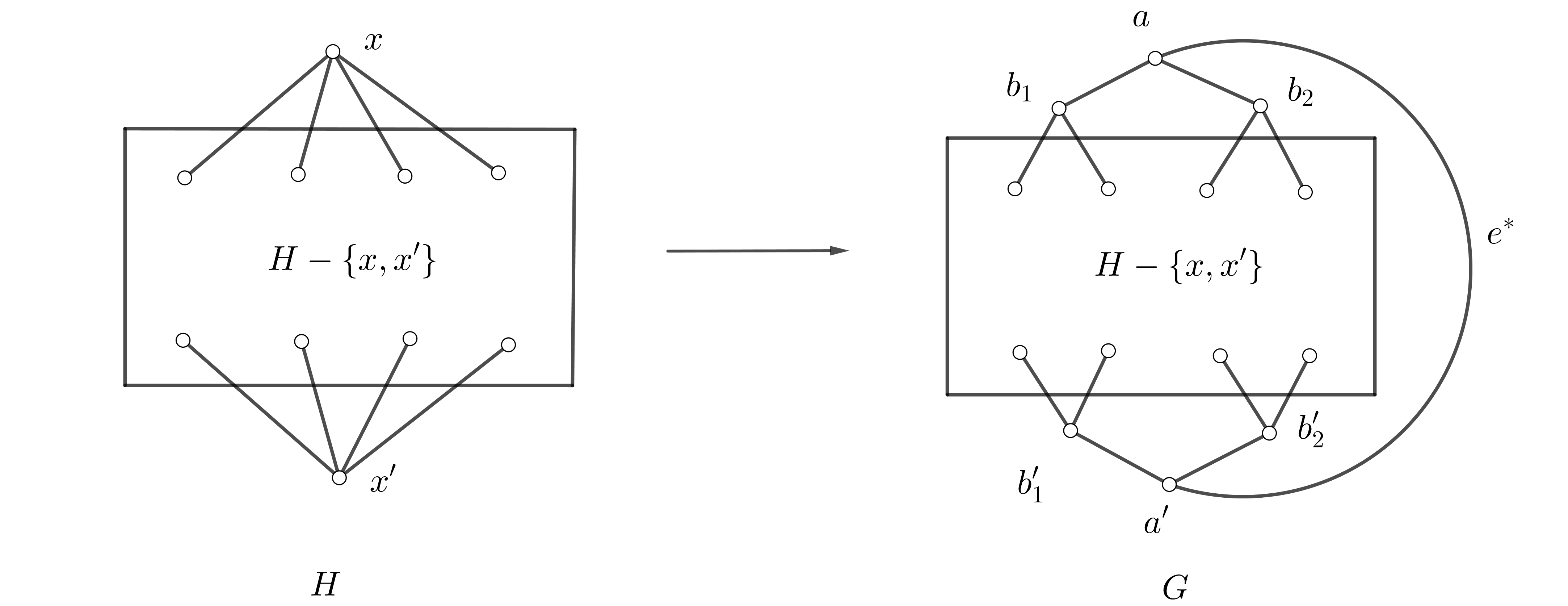}
    \vspace{-0.3cm}
  \caption{Expansion of $x$ and $x'$ by barriers of size two}\label{op4}
\end{figure}

{\it Expansion of two vertices by barriers of size two}:
Let $H$ be a brick and let $x$ and $x'$ be two vertices of $H$ whose degrees are at least four. Let $H':=H\{x\rightarrow (b_{1},b_{2}); x'\rightarrow (b_{1}',b_{2}')\}$ such that, in the underlying simple graph of $H'$, each of $b_{1}$, $b_{2}$, $b_{1}'$ and $b_{2}'$ has degree at least two and has at least one neighbour in $V(H-\{x,x'\})$.
Now obtain $G$ from $H'$ by adding two new vertices $a$ and $a'$, joining $a$ to $b_{1}$ and $b_{2}$, joining $a'$ to $b_{1}'$ and $b_{2}'$, and joining $a$ and $a'$ by an edge labelled $e$ (see Figure \ref{op4}). We shall refer to the graph $G$ thus constructed as a graph obtained from $H$ by {\it an expansion of $x$ and $x'$ by barriers of size two}. (Note that the order in which the two vertices $x$ and $x'$ are split is immaterial).

The study in this section is based on two very important results.

\begin{lemma}\cite{M2006}\label{brick H-G}
Let $H$ be a brick and let $G$ be a graph obtained from $H$ by means of edge additions and vertex expansions. Then $G$ is a brick.
\end{lemma}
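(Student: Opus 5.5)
The plan is to argue by induction on the total number of operations used to build $G$ from $H$. It then suffices to prove that a single edge addition, or a single vertex expansion of one of the three types, applied to a brick yields a brick. For each operation I would check two things: first that the resulting graph $G$ is matching covered, and second that $G$ has no nontrivial tight cut. Nonbipartiteness is automatic. Edge addition keeps $H$ as a spanning subgraph. For a vertex expansion, contracting the new barrier side recovers $H$, and $H$ is nonbipartite.

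\emph{Edge addition.} Since $H$ is spanning in $G$, every perfect matching of $H$ is a perfect matching of $G$.
\begin{itemize}
\item For matching coveredness, it remains only to cover the new edge $xy$. Because $H$ is a brick, $H$ is bicritical, so $H-x-y$ has a perfect matching, which together with $xy$ gives one.
\item For tightness, any nontrivial tight cut of $G$ would be a cut of $H$ meeting every perfect matching of $H$ exactly once, hence a nontrivial tight cut of $H$. This is impossible.
\end{itemize}

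\emph{Vertex expansions.} Here I would use the standard characterization of bricks as $3$-connected bicritical graphs (Edmonds--Lov\'asz--Pulleyblank).

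For the expansion by a barrier of size two, the set $X=\{a,b_1,b_2\}$ satisfies $G/X\cong H$ (contracting $X$ to $x$), and $\nabla(X)$ is an odd cut. The difficulty is that $G-b_1-b_2$ isolates $a$ except for the edge to $w$. So $\{b_1,b_2\}$ together with the structure around $a$ creates a barrier-like configuration, and $G$ is not obviously bicritical.

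At this point I would recheck the precise definition taken from \cite{M2006}. Norine--Thomas style expansions produce bricks only under their degree hypotheses, and the lemma's guarantee should come from verifying bicriticality directly. Concretely:
\begin{itemize}
\item For each pair $u,v$ of vertices of $G$, I would exhibit a perfect matching of $G-u-v$.
\item To do this, take a perfect matching of $H-u'-v'$, or of $H-u'$ augmented, where $u',v'$ are the images of $u,v$ in $H$.
\item Then extend it inside $X$, using $a$'s three edges and the degree conditions on $b_1,b_2$ (each has a neighbour outside).
\end{itemize}
The two-vertex and size-three barrier expansions are handled in the same way, with the case analysis of $u,v$ inside or outside the new gadget.

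For $3$-connectivity I would show that any $2$-vertex cut of $G$ maps to a vertex cut of $H$ of size at most $2$, by contracting the gadget. The degree conditions on the split vertices rule out the case where the cut isolates part of the gadget.

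The main obstacle is the bicriticality check for vertex expansions when both removed vertices lie in the gadget, for example $\{b_1,b_2\}$. In that case the removal may leave $a$ matchable only via $e$, and one must use that $H-x-w$ has a perfect matching (bicriticality of $H$) to complete the matching. I would treat this subcase first. If a genuine barrier appears there, that signals a misreading of the operation's definition, and I would fall back on citing \cite{M2006}.
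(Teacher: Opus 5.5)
The paper gives no proof of this lemma; it is quoted from \cite{M2006}. Several parts of your proposal are sound:
\begin{itemize}
\item the reduction to a single operation;
\item the edge-addition argument;
\item the plan of verifying $3$-connectivity and bicriticality and then invoking Edmonds--Lov\'asz--Pulleyblank;
\item the connectivity sketch, which is essentially right.
\end{itemize}

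The gap is in the bicriticality check for vertex expansions, and it is not where you put it. The pair $\{b_1,b_2\}$ is harmless. $G-b_1-b_2$ is connected of even order, so no barrier arises in $G$; the set $\{b_1,b_2\}$ is a barrier of $G-e$, which is where the name comes from. As you note, $e$ plus a perfect matching of $H-x-w$ finishes that case.

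Your recipe ``take a perfect matching of $H-u'-v'$ and extend it inside the gadget'' fails exactly when the deleted pair forces two or more vertices split off the same vertex of $H$ to be covered by edges inherited from $H$. For instance:
\begin{itemize}
\item $G-u-a$ with $u\in V(H)-x$ in the size-two expansion (both $b_1$ and $b_2$);
\item $G-a_1-a_2$ in the size-three expansion (all of $b_1,b_2,b_3$);
\item $G-a-a'$ in the two-vertex expansion.
\end{itemize}
A perfect matching of $H$ uses only one edge at $x$, and a perfect matching of $H-u-x$ already saturates $V(H)-x-u$, so there is nothing to extend. You have to prove directly that the split graph $H\{x\rightarrow(b_1,b_2)\}-u$ (respectively $H\{x\rightarrow(b_1,b_2,b_3)\}$, respectively the doubly split graph) has a perfect matching.

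That is where the hypotheses of the operations actually enter, and your sketch never uses them in a way that could reach this. One way to close it is as follows.
\begin{enumerate}
\item Take a Tutte set $S$ of the split graph with at least $|S|+2$ odd components.
\item Transfer it to a set $T\subseteq V(H)$: put $x$ (or $x'$) in place of the $b_i$ lying in $S$, delete or identify the remaining $b_i$, and add $u$ back in the case $G-u-a$.
\item Each single deletion or pairwise identification lowers the number of odd components by at most one, so $o(H-T)\ge|T|$.
\item This contradicts $o(H-T)\le|T|-2$ (bicriticality of $H$) whenever $|T|\ge 2$.
\item The remaining cases $|T|\le 1$, such as $T=\{u\}$, are handled by $3$-connectivity of $H$ together with the degree requirements on the $b_i$.
\end{enumerate}
Without those degree requirements, $G-u-a$ can genuinely have no perfect matching, e.g.\ if $u$ is the only neighbour of $b_1$ in $H-x$.

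Until this step is supplied, the vertex-expansion half is unproved, and falling back on \cite{M2006} is exactly what the paper does.

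A minor point: contracting an odd shore need not preserve bipartiteness, so your nonbipartiteness remark is not a valid inference as stated. It is unnecessary anyway, since a bicritical graph on at least four vertices is never bipartite.
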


\begin{lemma}\cite{M2006}\label{gena-brick}
Every brick different from the three basic bricks can be obtained from $K_{4}$, $\overline{C}_{6}$ or the Petersen graph by a sequence of applications of edge additions and vertex expansions.
\end{lemma}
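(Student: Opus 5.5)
The plan is to argue by induction on $|V(G)|+|E(G)|$. The statement is really the reverse of Lemma \ref{brick H-G}: instead of building bricks up, we peel one edge off a brick $G$ that is not one of the three basic bricks, and recover a smaller brick $H$ from which $G$ is obtained by a single edge addition or vertex expansion. The central notion is a \emph{thin} edge. An edge $e$ of a brick $G$ is \emph{removable} if $G-e$ is matching covered. It is \emph{thin} if, in addition, the retract of $G-e$ is a brick. Here the retract is obtained by bicontracting every vertex of degree two, that is, contracting such a vertex together with its two neighbours to a single vertex. The idea is to show that every nonbasic brick has a thin edge of a particularly controlled type, and then to read off which operation produced $G$.

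The first step is to show that every brick other than $K_4$, $\overline{C_6}$ and the Petersen graph has a thin edge. I would take this from the theorem of Carvalho, Lucchesi and Murty. It rests on Lov\'asz's theorem that every nonbasic brick has an edge whose deletion leaves a graph with exactly one brick in its tight cut decomposition. One then chooses such an edge so as to minimise the nontrivial tight cuts of $G-e$, and shows that these cuts are all trivial after bicontraction.

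The second step refines this to a \emph{strictly thin} edge $e$, meaning the retract $H$ of $G-e$ differs from $G-e$ by at most a bounded, well-described local change. Concretely, $G-e$ has at most two vertices of degree two, or a configuration of two degree-two vertices forming a barrier of size three. By Lemma \ref{L1986} the brick $H$ is well defined, which keeps the bookkeeping honest. The key claim is that if every thin edge of $G$ fails to be strictly thin, then $G$ belongs to one of a few explicit families: odd wheels, prisms, M\"obius ladders, or staircases. Each of these can be generated directly from $K_4$ or $\overline{C_6}$ by the operations, so in that case the theorem holds anyway. Proving this claim means taking a thin edge and analysing how a non-strictly-thin configuration forces a neighbouring thin edge, and propagating this until the graph is pinned down. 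I expect this step to be the main obstacle. I would attack it by studying barriers of $G-e$ containing the ends of $e$, and showing they have size at most three.

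The final step is a case analysis on the degrees of the ends of a strictly thin edge $e=uv$ in $G$.
\begin{enumerate}[(a)]
\item If both ends have degree at least four, then $G-e=H$ and $G$ arises from $H$ by an edge addition.
\item If exactly one end, say $u$, has degree three, then bicontracting $u$ in $G-e$ undoes an expansion by a barrier of size two, with $a=u$ and the edge $e$ joining $a$ to $w=v$.
\item If both ends have degree three and their neighbourhoods form two separate barriers, we recover an expansion of two vertices by barriers of size two.
\item If the two degree-two vertices share a neighbour, we recover the expansion by a barrier of size three.
\end{enumerate}
In each case one checks that the degree conditions imposed in the definitions of the operations hold, because $G$ is a simple-enough brick and $H$ is a brick. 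This completes the induction, with the basic bricks as the base.
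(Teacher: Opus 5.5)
The paper gives no proof of this lemma; it quotes it from de Carvalho, Lucchesi and Murty. Your Steps 1 and 3 follow their route: a nonbasic brick $G$ has a thin edge $e$, the retract $H$ of $G-e$ is a brick with fewer edges, and the degrees of the ends of $e$ tell you which of the four operations rebuilds $G$ from $H$.

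\textbf{Step 2 should be deleted, and it misplaces the difficulty.} As you define it, ``strictly thin'' is vacuous. A brick has minimum degree at least three, so only the ends of $e$ can have degree two in $G-e$, and your cases (a)--(d) already cover every thin edge. The dichotomy you are recalling (odd wheels, prisms, M\"obius ladders, staircases) comes from Norine and Thomas's strengthening for simple bricks, where no parallel edges may arise. For example, deleting a spoke of an odd wheel and bicontracting its rim end gives a smaller odd wheel with a doubled spoke. Here that is a perfectly good thin edge, because the operations in this paper act on multigraphs. Indeed, the wheel with five spokes is obtained from $K_4$ plus one parallel edge by an expansion by a barrier of size two. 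So all the depth sits in Step 1, which you cite. (The $b$-invariant edge theorem behind it was conjectured by Lov\'asz and proved by de Carvalho, Lucchesi and Murty; it is not Lov\'asz's theorem.)

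\textbf{What Step 3 actually needs}, in place of ``$G$ is a simple-enough brick'', is the following list of checks.

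First, $H$ must be the retract with every multiplicity created by bicontraction kept, or $G$ is not recovered. Lemma~\ref{L1986} fixes the brick only up to multiplicities, so it is not what pins down $H$.

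Second, since $G-e$ is matching covered, $N_{G-e}(u)=\{b_1,b_2\}$ is a barrier of $G-e$, so $b_1b_2\notin E(G)$. This gives $d_H(x)=d(b_1)+d(b_2)-2\ge 4$, and in case (d) $d_H(x)=d(b_1)+d(b_2)+d(b_3)-4\ge 5$.

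Third, $u$ has three distinct neighbours, so $v\notin\{b_1,b_2\}$ and $w=v$ lies in $H-x$. Also, two ends of degree three cannot share both neighbours, by $3$-connectivity and removability of $e$.

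Fourth, in case (c) you must show that each of $b_1,b_2,b_1',b_2'$ has a neighbour outside $\{u,v,b_1,b_2,b_1',b_2'\}$, as the definition requires. Suppose $b_1$ has none. Then $N(b_1)=\{u,b_1',b_2'\}$. Every perfect matching then meets $\nabla(\{u,v,b_1,b_1',b_2'\})$ exactly once, since using three of its edges would force the nonadjacent vertices $v$ and $b_1$ to be matched to each other. This is a nontrivial tight cut unless $|V(G)|=6$. In that case $G$ is $K_{3,3}$ (up to multiplicities), which is bipartite and so not a brick.

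With these checks supplied, your induction on $|E(G)|$ is complete, modulo the cited thin edge theorem.
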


Let $H$ be a brick. Suppose that edge $e$ is a $3^+$-matching edge of $H$, $M_{1}$, $M_{2}$ and  $M_{3}$ are three perfect matchings of $H$ containing $e$. If $G$ is obtained from $H$ by an edge addition, then $M_{i}$ $(i=1,2,3)$ are also three perfect matchings of $G$ containing $e$. If $G$ is obtained from $H$ by an expansion of a vertex by a barrier of size two (see Figure \ref{op2}), then $M_{i}\cup ab_{1}$ or $M_{i}\cup ab_{2}$ $(i=1,2,3)$ are perfect matchings of $G$ containing $e$. If $G$ is obtained from $H$ by an expansion of a vertex by a barrier of size three (see Figure \ref{op3}), then $M_{i}\cup a_{1}b_{2}\cup a_{2}b_{3}$ or $M_{i}\cup a_{1}b_{1}\cup a_{2}b_{3}$ or $M_{i}\cup a_{1}b_{1}\cup a_{2}b_{2}$ $(i=1,2,3)$ are perfect matchings of $G$ containing $e$. If $G$ is obtained from $H$ by an expansion of two vertices by barriers of size two (see Figure \ref{op4}), then $M_{i}\cup ab_{1}\cup a'b_{1}'$ or $M_{i}\cup ab_{1}\cup a'b_{2}'$ or $M_{i}\cup ab_{2}\cup a'b_{1}'$ or~$M_{i}\cup ab_{2}\cup a'b_{2}'$ $(i=1,2,3)$ are perfect matchings of $G$ containing $e$. In conclusion, $G$ has a $3^+$-matching edge. Through the above analysis and induction, we can get the following lemma.

\begin{lemma}\label{H-G}
Let $H$ be a brick, and $G$ be a graph obtained from $H$ by a sequence of applications of edge additions and vertex expansions. If there exists a $3^+$-matching edge in $H$,
then $G$ has a $3^+$-matching edge.
\end{lemma}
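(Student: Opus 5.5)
The plan is induction on the number of operations in the sequence, reducing everything to a single operation. Write $H=H_0,H_1,\ldots,H_t=G$, where each $H_{j+1}$ is obtained from $H_j$ by one edge addition or one vertex expansion. By Lemma~\ref{brick H-G} every $H_j$ is a brick, so the one-step claim can be reapplied at every stage. It then suffices to prove: if a brick $H$ has a $3^+$-matching edge $e$ and $G$ arises from $H$ by one operation, then $G$ has a $3^+$-matching edge.

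For the one-step claim, fix three distinct perfect matchings $M_1,M_2,M_3$ of $H$ containing $e$ and extend each to a perfect matching of $G$. The key is to keep track of which vertex of $H$ each $M_i$ uses at the split vertex. Split by operation:

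\begin{enumerate}[(\romannumeral1)]
\item \emph{Edge addition.} $E(H)\subseteq E(G)$ and $V(G)=V(H)$, so each $M_i$ is already a perfect matching of $G$.
\item \emph{Expansion of $x$ by a barrier of size two.} Each edge of $H$ at $x$ becomes an edge at exactly one of $b_1,b_2$. The unique edge of $M_i$ at $x$ is therefore incident with some $b_j$. Add $ab_{3-j}$ to $M_i$.
\item \emph{Expansion of $x$ by a barrier of size three.} $M_i$ covers exactly one of $b_1,b_2,b_3$. Add the appropriate pair from $\{a_1b_2,a_2b_3\}$, $\{a_1b_1,a_2b_3\}$ or $\{a_1b_1,a_2b_2\}$ to cover the other two $b$'s together with $a_1,a_2$.
\item \emph{Expansion of $x$ and $x'$ by barriers of size two.} Proceed as in (ii), independently at $a$ and at $a'$.
\end{enumerate}

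The three extensions are pairwise distinct because they restrict to the distinct $M_1,M_2,M_3$ on $E(H)$. Each of them contains $e$. This is automatic if $e$ was not incident with a split vertex, since then $e$ is literally an edge of $G$. It also holds if $e$ was incident with $x$, since $e$ survives in $G$ as an edge at some $b_j$ (condition (ii) of splitting, $E(H')=E(H)$). So $e$ is a $3^+$-matching edge of $G$.

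The main thing to get right is the bookkeeping in (ii)--(iv). I need to confirm that in every case the $b$'s left uncovered by $M_i$ can be matched using only the new edges and the new vertices $a$, $a_1,a_2$, $a'$. For example, in (iv) the new edge labelled $e$ joining $a$ and $a'$ is simply never used. I also need to check that the new edge labelled $e$ in the barrier-of-size-two expansion, which joins $a$ to $w$, is not needed. Both are clear from the figures once it is observed that exactly one $b_j$ is covered by $M_i$ at each split vertex.

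The statement is in fact stronger than claimed: the same edge $e$ remains a $3^+$-matching edge throughout the sequence. Hence the induction goes through verbatim.
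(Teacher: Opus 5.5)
Your proposal is correct and follows essentially the paper's own argument. The paper likewise takes three perfect matchings of $H$ containing $e$ and extends each through a single edge addition or vertex expansion by adding $ab_1$ or $ab_2$, the appropriate pair $a_1b_j, a_2b_k$, or one edge at each of $a$ and $a'$; it then concludes by induction along the sequence, and your observation that each $M_i$ covers exactly one split vertex $b_j$ (even when $M_i$ uses an edge $xx'$) just makes the paper's ``or'' choices precise.
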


\begin{lemma}\label{operation2}
Let $H$ be a brick and $x\in V(H)$, and $G$ be a graph obtained from $H$ by an expansion of $x$ by a barrier of size two. If the degree of $x$ is four and there exists a $2$-matching edge incidents with $x$, or the degree of $x$ is at least five, then $G$ has a $3^+$-matching edge.
\end{lemma}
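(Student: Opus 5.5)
The idea is to count perfect matchings of $G$ through the new edges $ab_1$ and $ab_2$. Each such count equals the number of perfect matchings of $H$ that use an edge from one side of the split of $\nabla_H(x)$. Throughout, let $H' = H\{x\rightarrow(b_1,b_2)\}$, and let $a$, $w$ and $e=aw$ be as in the definition of an expansion by a barrier of size two. Note that $(\nabla_{H'}(b_1),\nabla_{H'}(b_2))$ is a partition of $\nabla_H(x)$.

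\emph{Step 1 (a correspondence).} I would show that the perfect matchings of $G$ containing $ab_1$ are in bijection with the perfect matchings of $H$ that contain an edge of $\nabla_{H'}(b_2)$.
\begin{itemize}
\item Given a perfect matching $M$ of $G$ with $ab_1\in M$, the vertex $b_2$ must be matched by an edge $g\in\nabla_{H'}(b_2)$, because $a$ is already covered. Contracting $\{a,b_1,b_2\}$ back to $x$ then turns $M-ab_1$ into a perfect matching of $H$ that uses $g$ at $x$.
\item Conversely, if $N$ is a perfect matching of $H$ whose edge at $x$ lies in $\nabla_{H'}(b_2)$, then $N\cup\{ab_1\}$ is a perfect matching of $G$.
\end{itemize}
Symmetrically, the perfect matchings of $G$ containing $ab_2$ correspond to the perfect matchings of $H$ whose edge at $x$ lies in $\nabla_{H'}(b_1)$.

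\emph{Step 2 (counting).} Every perfect matching of $H$ contains exactly one edge of $\nabla_H(x)$. Hence the number of perfect matchings of $G$ containing $ab_1$ equals $\sum_{g\in\nabla_{H'}(b_2)} \Phi_H(g)$, where $\Phi_H(g)$ denotes the number of perfect matchings of $H$ containing $g$. Since $H$ is a brick, it is matching covered, so $\Phi_H(g)\ge 1$ for every edge $g$. Therefore $ab_1$ lies in at least $|\nabla_{H'}(b_2)|$ perfect matchings of $G$, with strict excess whenever some edge of $\nabla_{H'}(b_2)$ is a $2^+$-matching edge of $H$. The same holds for $ab_2$ with $b_1$ in place of $b_2$.

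\emph{Step 3 (the two cases).}
\begin{itemize}
\item If $d_H(x)\ge 5$, then one of the two sets, say $\nabla_{H'}(b_2)$, has at least $3$ edges. By Step 2, $ab_1$ is a $3^+$-matching edge of $G$.
\item If $d_H(x)=4$, the definition requires $b_1$ and $b_2$ to have degree at least two in the underlying simple graph of $H'$. Hence each $\nabla_{H'}(b_i)$ contains at least $2$ edges, so the split is $2+2$. Let $g$ be a $2$-matching edge of $H$ incident with $x$, and say $g\in\nabla_{H'}(b_2)$. The sum in Step 2 for $ab_1$ is then at least $2+1=3$, so $ab_1$ is a $3^+$-matching edge of $G$. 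If instead $g\in\nabla_{H'}(b_1)$, the same argument applies to $ab_2$.
\end{itemize}

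The only point needing care is the bijection in Step 1. One must check that distinct edges of $\nabla_{H'}(b_2)$, and distinct perfect matchings of $H$ through a fixed edge, yield distinct perfect matchings of $G$. This is immediate from the construction $N\mapsto N\cup\{ab_1\}$, so I do not expect a real obstacle.
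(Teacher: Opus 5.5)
Your proof is correct and is essentially the paper's argument. The paper also extends perfect matchings of $H$ through the edges of $\nabla_H(x)$ on one side of the split by the edge joining $a$ to the other $b_i$, obtaining three matchings either from three edges on one side (when $d_H(x)\ge 5$) or from a $2$-matching edge plus a second edge on the same side (when $d_H(x)=4$). Your counting formula packages this a little more cleanly, and you justify the $2+2$ split in the degree-four case from the definition of the expansion, which the paper simply assumes.
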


\begin{proof}
Let $d_H(x)$ denote the degree of $x$ in $H$. If $d_H(x)=4$, let $e_{1}$, $e_{2}$, $e_{3}$ and $e_{4}$ be the edges incident with $x$. Assume that $e_{1}$ and $e_{2}$ incident with $b_{1}$, $e_{3}$ and $e_{4}$ incident with $b_{2}$ in $G$. Suppose without loss of generality that $e_{1}$ is a $2$-matching edge in $H$, $M_{1}$ and $M_{2}$ are two perfect matchings of $H$ containing $e_{1}$. Then $M_{1}\cup ab_{2}$ and $M_{2}\cup ab_{2}$ are two perfect matchings of $G$ containing $ab_{2}$. Since $e_{2}$ is a admissible edge in $H$, let $M_ {3}$ be a perfect matching of $H$ containing $e_{2}$, thus $M_{3}\cup ab_{2}$ is a perfect matching of $G$ containing $ab_{2}$. Hence $ab_{2}$ is a $3^+$-matching edge of $G$.

If $d_H(x)\geq 5$, then at least one of $b_{1}$ and $b_{2}$ incidents with at least four edges in $G$, might as well let $b_{1}$ be that vertex and $e_{1}'$, $e_{2}'$, $e_{3}'$ and $ab_{2}$ be four edges incident with $b_1$. Note that $e_{1}'$, $e_{2}'$ and $e_{3}'$ are admissible edges in $H$, and are incident with $x$. Let $M_{i}$ be the perfect matchings in $H$ containing $e_{i}'$ $(i=1,2,3)$, then $M_{i}\cup ab_{2}$ $(i=1,2,3)$ are perfect matchings of $G$ containing $ab_{2}$. Therefore $ab_{2}$ is a $3^+$-matching edge of $G$.
\end{proof}

\begin{lemma}\label{operation3}
Let $H$ be a brick and $x\in V(H)$, and $G$ be a graph obtained from $H$ by an expansion of $x$ by a barrier of size three, then $G$ has a $3^+$-matching edge.
\end{lemma}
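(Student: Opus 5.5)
We mimic the proof of Lemma \ref{operation2}: exhibit one edge of $G$ that lies in at least three perfect matchings, each built from a perfect matching of $H$ plus two edges joining $\{a_1,a_2\}$ to the $b$'s. In $G$ the set $\{b_1,b_2,b_3\}$ together with $a_1,a_2$ behaves like the old vertex $x$. The vertices $a_1,a_2$ are matched to exactly two of $b_1,b_2,b_3$, and the remaining $b_j$ is matched by an edge of $\nabla_H(x)$ incident with $b_j$ in $G$.

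Concretely, take any perfect matching $M$ of $H$, and let $f$ be its edge at $x$. If $f$ is incident with $b_1$ in $G$, then $M\cup\{a_1b_2,a_2b_3\}$ is a perfect matching of $G$. If $f$ is incident with $b_2$, then $M\cup\{a_1b_1,a_2b_3\}$ is one. If $f$ is incident with $b_3$, then $M\cup\{a_1b_1,a_2b_2\}$ is one. Each perfect matching $M$ of $H$ thus yields a distinct perfect matching of $G$.

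\textbf{Choice of edge.} I would aim at $a_2b_3$ (symmetrically $a_1b_1$). By the lift above, $a_2b_3$ lies in a perfect matching of $G$ exactly when the lifted $M$ uses at $x$ an edge that goes to $b_1$ or $b_2$. By the definition of the expansion, $b_1$ has degree at least two in the underlying simple graph of $H'$, so at least two distinct edges $g_1,g_2$ of $\nabla_H(x)$ go to $b_1$. Also $b_2$ has degree at least one, so some edge $g_3$ of $\nabla_H(x)$ goes to $b_2$. These $g_i$ have distinct other ends or are at least distinct edges.

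\textbf{Counting.} Since $H$ is a brick, it is matching covered, so each $g_i$ lies in some perfect matching $M_i$ of $H$. The $M_i$ are pairwise distinct because each contains exactly one edge at $x$, namely $g_i$. Lifting gives:
\begin{itemize}
\item $M_1\cup\{a_1b_2,a_2b_3\}$,
\item $M_2\cup\{a_1b_2,a_2b_3\}$,
\item $M_3\cup\{a_1b_1,a_2b_3\}$.
\end{itemize}
These are three distinct perfect matchings of $G$, all containing $a_2b_3$. Hence $a_2b_3$ is a $3^+$-matching edge of $G$.

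\textbf{Main obstacle.} The only delicate point is that the three edges at $x$ must be distinct edges of $H$, so that the $M_i$ are distinct. This follows from the degree conditions on $b_1$ and $b_2$ in the definition of the expansion. These conditions were stated for the underlying simple graph, which gives distinct neighbours and a fortiori distinct edges. No hypothesis on $d_H(x)$ beyond the definition is needed, unlike in Lemma \ref{operation2}.
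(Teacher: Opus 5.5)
Your proof is correct and is the paper's argument: the same edge $a_2b_3$, with $M_1\cup\{a_1b_2,a_2b_3\}$, $M_2\cup\{a_1b_2,a_2b_3\}$ and $M_3\cup\{a_1b_1,a_2b_3\}$ lifted from perfect matchings of $H$ through two edges at $b_1$ and one edge at $b_2$. You also check explicitly that these three matchings are distinct, because each $M_i$ has exactly one edge at $x$; the paper leaves that step implicit.
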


\begin{proof}
By the process of an expansion of $x$ by a barrier of size three, there exist three edges $e_{1}$, $e_{2}$ and $e_{3}$ incident with $x$ such that $e_1$ and $e_2$ are incident with $b_1$, $e_3$ is incident with $b_2$. Note that $e_{1}$, $e_{2}$ and $e_{3}$ are admissible edges in $H$. Let $M_{i}$ be the perfect matching of $H$ containing $e_{i}$ for $i=1,2,3$. Then $M_{1}\cup a_{1}b_{2}\cup a_{2}b_{3}$, $M_{2}\cup a_{1}b_{2}\cup a_{2}b_{3}$, and $M_{3}\cup a_{1}b_{1}\cup a_{2}b_{3}$ are three perfect matchings of $G$ containing $a_{2}b_{3}$. Therefore, $a_{2}b_{3}$ is a $3^+$-matching edge in $G$.
\end{proof}

\begin{lemma}\label{operation4}
Let $H$ be a brick and $x, x'\in V(H)$, and $G$ be the graph obtained from $H$ by an expansion of $x$ and $x'$ by barriers of size two. If one of the following conditions hold: (1) the degrees of $x$ and $x'$ are both $4$, and there exists a $2$-matching edge in $H$ incident with $x$ or $x'$; (2) the degree of $x$ or $x'$ is at least $5$, then $G$ has a $3^+$-matching edge.
\end{lemma}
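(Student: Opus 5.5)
The plan is to copy the proofs of Lemma \ref{operation2} and Lemma \ref{operation3}. In each case we exhibit an edge incident with one of the new vertices $a$ or $a'$ that lies in at least three perfect matchings of $G$. The basic lifting step is this. Let $M$ be a perfect matching of $H$ and let $f$ be the unique edge of $M$ at $x$. If $f$ is incident with $b_1$ in $G$, then $M\cup\{ab_2\}$ covers every vertex except possibly the split copies of $x'$ and $a'$. We then extend at $x'$ in the same way: if the edge of $M$ at $x'$ goes to $b_i'$, we add $a'b_{3-i}'$. The result is a perfect matching of $G$.

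Two points must be checked. First, the edge $e=aa'$ is never used. Second, the case where $M$ contains an edge joining $x$ and $x'$ needs care. Such an edge becomes, in $G$, an edge $b_ib_j'$, and the extension above still works.

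\textbf{Case (1).} Assume $d_H(x)=d_H(x')=4$ and, by symmetry, that a $2$-matching edge $e_1$ of $H$ is incident with $x$. Exactly as in Lemma \ref{operation2}, let $e_1,e_2$ go to $b_1$ and $e_3,e_4$ go to $b_2$. Take the two perfect matchings $M_1,M_2$ of $H$ containing $e_1$, and a perfect matching $M_3$ containing $e_2$, which exists because $H$ is matching covered. Each of these meets $x$ in an edge going to $b_1$, so extending each by $ab_2$ and by the appropriate edge at $a'$ gives three distinct perfect matchings of $G$ containing $ab_2$. They are distinct because their restrictions to $E(H)$ are distinct. Hence $ab_2$ is a $3^+$-matching edge.

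\textbf{Case (2).} Assume $d_H(x)\geq 5$. Then one of $b_1,b_2$, say $b_1$, receives at least three edges $e_1',e_2',e_3'$ of $\nabla_H(x)$. Choose perfect matchings $M_i$ of $H$ containing $e_i'$. They are distinct, since each contains exactly one edge at $x$. Extend each as above; all three extensions contain $ab_2$. If instead $d_H(x')\geq 5$, apply the same argument at $x'$.

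The only step needing care is the extension when an $M_i$ uses an edge between $x$ and $x'$, because that edge is then incident with both a $b_i$ and a $b_j'$. I expect this to cause no difficulty: the choice at $a$ depends only on which of $b_1,b_2$ is covered, and likewise at $a'$, so both choices are forced and consistent. Distinctness is preserved because the restriction to $E(H)$ recovers $M_i$.
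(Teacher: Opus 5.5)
Your proposal is correct and is essentially the paper's proof. In both cases you lift perfect matchings of $H$ whose edge at $x$ goes to $b_1$: two through the $2$-matching edge $e_1$ plus one through $e_2$ in case (1), and three through $e_1',e_2',e_3'$ in case (2). Each lift is completed by $ab_2$ and the forced edge at $a'$, so $ab_2$ lies in at least three perfect matchings of $G$; your handling of edges joining $x$ and $x'$ and of the distinctness of the lifts fills in details the paper leaves implicit.
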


%


\begin{proof}
If both $x$ and $x'$ have degree $4$, let $e_{1}$, $e_{2}$, $e_{3}$ and $e_{4}$ be the four edges incident with $x$, let $e_{1}'$, $e_{2}'$, $e_{3}'$ and $e_{4}'$ be the four edges incident with $x'$. Without loss of generality, assume that $e_{1}$ and $e_{2}$ are incident with $b_{1}$, $e_{3}$ and $e_{4}$ are incident with $b_{2}$, and $e_{1}'$ and $e_{2}'$ are incident with $b_{1}'$, $e_{3}'$ and $e_{4}'$ are incident with $b_{2}'$. If there exists a $2$-matching edge in $H$ incident with $x$ or $x'$, say, $e_{1}$. Let $M_{1}$ and $M_{2}$ be two perfect matchings of $H$ containing $e_{1}$. Then $M_{1} \cup ab_{2} \cup a'b_{i}'$ and $M_{2} \cup ab_{2} \cup a'b_{i}'$ ($i=1$ or $2$) are perfect matchings of $G$ containing $ab_{2}$. Since $e_{2}$ is a admissible edge in $H$, there exists a perfect matching $M_{3}$ of $H$ containing $e_{2}$. Then $M_{3} \cup ab_{2} \cup a'b_{i}'$ ($i=1$ or $2$) is a perfect matching of $G$ containing $ab_{2}$. Therefore, $ab_{2}$ is a $3^+$-matching edge in $G$.

If the degree of $x$ or $x'$ is at least $5$, say $x$, then at least one of $b_{1}$ and $b_{2}$ is incident with at least four edges in $G$, say $b_1$. Let $e_{1}$, $e_{2}$ and $e_{3}$ be three edges incident with $x$ in $H$, and incident with $b_1$ in $G$. Since $e_{1}$, $e_{2}$ and $e_{3}$ are admissible edges in $H$, there exist perfect matchings $M_{1}$, $M_{2}$ and $M_{3}$ in $H$ containing $e_{1}$, $e_{2}$ and $e_{3}$ respectively. Then $M_{i} \cup ab_{2} \cup a'b_{1}'$ or $M_{i} \cup ab_{2} \cup a'b_{2}'$ for $i=1,2,3$ are three perfect matchings of $G$ containing $ab_{2}$. Therefore, $ab_{2}$ is a $3^+$-matching edge in $G$.
\end{proof}

Notice that an edge addition for a matching double covered brick can generate $3^+$-matching edges. Combining this fact with Lemmas \ref{operation2}, \ref{operation3} and \ref{operation4}, we can obtain the result as follow.

\begin{lemma}\label{D-H-G}
Let $H$ be a matching double covered brick. If $G$ is obtained from $H$ by series of edge additions and vertex expansions, then $G$ contains a $3^+$-matching edge.
\end{lemma}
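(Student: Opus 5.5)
By Lemma \ref{H-G}, a $3^+$-matching edge, once present in an intermediate brick, persists under any further edge additions and vertex expansions. So it is enough to show that the \emph{first} operation applied to $H$ already produces a brick $G_1$ with a $3^+$-matching edge. The rest of the sequence is then handled by Lemma \ref{H-G}, applied to $G_1$; this is legitimate because $G_1$ is a brick by Lemma \ref{brick H-G}. The proof therefore reduces to a case analysis on the type of the first operation. Throughout, we use that every edge of $H$ lies in exactly two perfect matchings, so every edge of $H$ is a $2$-matching edge.

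\emph{Edge addition.} Suppose $G_1=H+xy$. Since $G_1$ is a brick, it is matching covered, so there is a perfect matching $M$ of $G_1$ containing $xy$. Pick an edge $f\in M-xy$; such an edge exists because a brick has at least four vertices. Now $f$ lies in the two perfect matchings of $H$, which are also perfect matchings of $G_1$, and additionally in $M$, which is not a perfect matching of $H$ since it contains $xy$. Hence $f$ is a $3^+$-matching edge of $G_1$.

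\emph{Expansion by a barrier of size two.} Here $d_H(x)\ge 4$ by the definition of the operation. If $d_H(x)\ge 5$, Lemma \ref{operation2} applies directly. If $d_H(x)=4$, every edge at $x$ is a $2$-matching edge of $H$, so the hypothesis of Lemma \ref{operation2} holds and again $G_1$ has a $3^+$-matching edge.

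\emph{Expansion by a barrier of size three.} Lemma \ref{operation3} gives a $3^+$-matching edge with no extra hypothesis.

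\emph{Expansion of two vertices by barriers of size two.} Both $x$ and $x'$ have degree at least $4$. If either has degree at least $5$, condition (2) of Lemma \ref{operation4} applies. Otherwise both have degree $4$, and condition (1) holds because every edge at $x$ is a $2$-matching edge of $H$.

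The only step needing care is the edge-addition case. Its two ingredients are that the new edge is admissible in $G_1$, which follows from Lemma \ref{brick H-G}, and that $M-xy$ is nonempty. The vertex-expansion cases are immediate consequences of the lemmas already proved.
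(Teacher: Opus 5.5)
Your proof is correct and follows the paper's own route. The paper likewise observes that an edge addition to a matching double covered brick creates a $3^+$-matching edge, combines this with Lemmas \ref{operation2}, \ref{operation3} and \ref{operation4} (whose hypotheses hold because every edge of $H$ is a $2$-matching edge), and implicitly uses Lemma \ref{H-G} for the remaining operations. Your explicit edge-addition argument fills in the detail the paper only asserts: a perfect matching through the new edge, together with the two old perfect matchings, gives three perfect matchings through any other edge of it.
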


\begin{lemma}\label{K-G}
Let $K_{4}^*$ be the graph obtained from $K_4$ by edge additions. If there exist at least three parallel edges incident with two adjacent vertices in $K_{4}^*$, then $K_{4}^*$ has a $3^+$-matching edge.
\end{lemma}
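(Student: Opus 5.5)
Let $x$ and $y$ be the two adjacent vertices of $K_4^*$ joined by parallel edges $e_1,e_2,\ldots,e_t$ with $t\geq 3$, and let $z,w$ be the other two vertices. The proof follows the same pattern as Lemma \ref{C-3+}. In $K_4$ the vertex pair $\{x,y\}$ has a unique complementary pair $\{z,w\}$, and $K_4^*-x-y$ contains at least one edge $f$ joining $z$ and $w$, since $K_4^*$ contains $K_4$ as a spanning subgraph.

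For each $i=1,\ldots,t$, the set $\{e_i,f\}$ is a perfect matching of $K_4^*$. These are $t\geq 3$ distinct perfect matchings, because they differ in the edge between $x$ and $y$. All of them contain $f$, so $f$ lies in at least three perfect matchings. Hence $f$ is a $3^+$-matching edge of $K_4^*$, which proves the statement.

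No step here is a real obstacle. The only point to check is that edge additions to $K_4$ keep the vertex set at four vertices, so that $\{x,y\}$ and $\{z,w\}$ partition $V(K_4^*)$ and $f$ exists. Both facts are immediate from the definition of edge addition.
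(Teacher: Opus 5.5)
Your proof is correct and is essentially the paper's argument: the paper also takes the complementary edge $v_3v_4$ to the pair $v_1,v_2$ carrying at least three parallel edges, and concludes that it is a $3^+$-matching edge. You additionally spell out the $t\geq 3$ distinct perfect matchings $\{e_i,f\}$, which the paper leaves to the reader.
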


\begin{proof}
Let $v_{1}$, $v_{2}$, $v_{3}$ and $v_{4}$ be the four vertices of $K_{4}^*$. Suppose that there exist at least three parallel edges incident with two adjacent vertices, say $v_{1}$ and $v_{2}$. Then edge $v_3v_4$ is a $3^+$-matching edge of $K_{4}^*$.
\end{proof}

{\bf Proof of Theorem \ref{brick-B}.} By definition, one can check that $K_{4}$, $K_{4}^2$, $K_{4}^4$, the Petersen graph $P$, $T_{1}$ and $K_{4}^6$ are in $\mathfrak{B}$. Hence the sufficiency follows. Before we get to the necessity, let us provide a claim will be used in the subsequent argument. The claim can be obtained by Lemmas \ref{operation2}, \ref{operation3} and \ref{operation4}.

{\bf Claim.} Let $G$ be a brick containing no $3^+$-matching edges. In the generation process of $G$ obtained from a brick by means of edge additions and vertex expansions, if there exists a graph $F$ such that each vertex with degree four is incident with a $2$-matching edge, then the new graph followed by $F$ can only be obtained from $F$ by edge additions.

Now we proceed to establish the necessity. Suppose that $G$ is a brick in $\mathfrak{B}$. Note that $K_{4}$ and the Petersen graph are all in $\mathfrak{B}$, $\overline{C}_{6}$ is not in $\mathfrak{B}$. Then $G$ may be $K_{4}$ or the Petersen graph. Otherwise, by Lemma \ref{gena-brick}, the graph $G$ can be obtained from $K_{4}$, $\overline{C}_{6}$ or the Petersen graph by a sequence of applications of edge additions and vertex expansions. We consider the following cases based on the kinds of the basic bricks.

{\bf Case 1.} If $G$ is obtained from the Petersen graph $P$ by a series of edge additions and vertex expansions. Note that $P$ is matching double covered. Then by Lemma \ref{D-H-G}, the first graph $H_1$, during the generation process of $G$ obtained from $P$ by means of edge additions and vertex expansions, can only be obtained from $P$ by an edge addition. However, we can check that $H_1$ has a $3^+$-matching edge. Hence by Lemma \ref{H-G}, $G$ has a $3^+$-matching edge and so is not in $\mathfrak{B}$, a contradiction.

{\bf Case 2.} If $G$ is obtained from $\overline{C}_6$ by a series of edge additions and vertex expansions. Since $\overline{C}_6$ is $3$-regular and $\overline{C}_6\notin \mathfrak{B}$, the first graph $H_1$ can only be obtained from $\overline{C}_6$ by adding edge $e_1$, i.e. $H_1=\overline{C}_6+e_1$. Let $V(\overline{C}_6)=\{v_1,v_2,\ldots,v_6\}$, see Figure \ref{T13} (1). Suppose that the ends of $e_1$ are two adjacent vertices in $\overline{C}_6$, say $v_1$ and $v_2$, or $v_1$ and $v_4$. If $e_1$ connects $v_1$ and $v_2$, then $v_5v_6$ is a $3^+$-matching edge of $H_1$. If $e_1$ connects $v_1$ and $v_4$, then $v_1v_6$ is a $3^+$-matching edge of $H_1$. Then by Lemma \ref{H-G}, the graph $G$ obtained from $H_1$ by a series of edge additions and vertex expansions also has a $3^+$-matching edge, a contradiction. Therefore, the ends of $e_1$ are two nonadjacent vertices in $\overline{C}_6$. By symmetry, we can suppose that the ends of $e_1$ are $v_1$ and $v_3$ ( see Figure \ref{T13} (2)).

\begin{figure}[h!]
  \centering
  \includegraphics[width=4in]{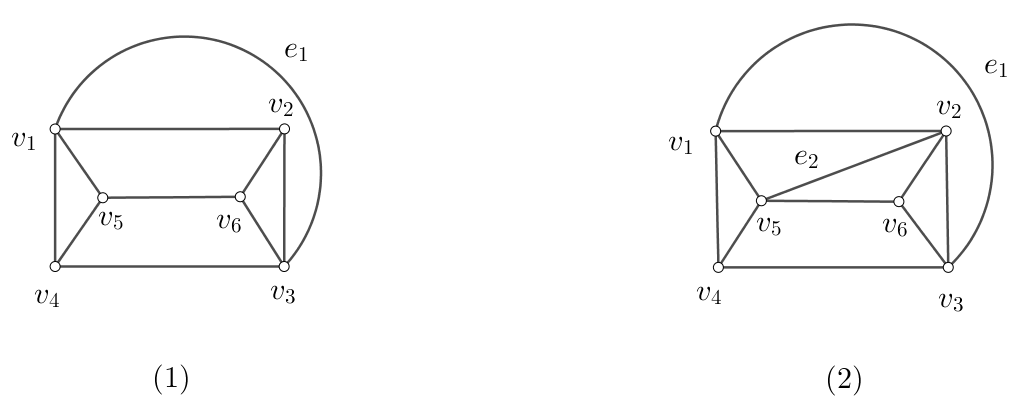}
    \vspace{-0.5cm}
  \caption{(1)The graph $\overline{C}_{6}$; (2)The graph $H_1=\overline{C}_{6}+e_1$; (3) the brick $H_2=\overline{C}_{6}+e_1+e_2$.}\label{T13}
\end{figure}

Since $H_1$ is not regular and $G\in \mathfrak{B}$, $H_1$ can not be $G$ by Lemma \ref{B-regular}. Note that $H_{1}$ only has two $4$-degree vertices $v_{1}$ and $v_3$. Meanwhile, $v_{1}v_{2}$ and $v_{3}v_{4}$ are $2$-matching edges incident with $v_{1}$ and $v_3$ in $H$ respectively. Thus by Claim, the new graph $H_2$ followed by $H_1$ can only be obtained by an edge addition. Let $H_{2}=H_{1}+e_{2}$. Based on the above analysis, $e_{2}$ cannot connect two adjacent vertices in $H_{1}$. Therefore, $e_{2}$ is $v_{1}v_{6}$, $v_{2}v_{4}$ or $v_{2}v_{5}$ by symmetry. If $e_{2}=v_{1}v_{6}$, then $v_{4}v_{5}$ is a $3^+$-matching edge in $H_{2}$. Similarly, $e_{2}$ cannot be $v_{2}v_{4}$. If $e_{2}=v_{2}v_{5}$, as shown in Figure \ref{T13} (2). Since $H_2$ is not regular, we have $G \neq H_2$ by Lemma \ref{B-regular}. Note that $H_2$ has four $4$-degree vertices $v_1$, $v_2$, $v_3$ and $v_5$, and $v_1v_2$, $v_4v_5$ and $v_3v_4$ are $2$-matching edges in $H_2$. Therefore by Claim, the new graph $H_3$ followed by $H_2$ can only be obtained by adding an edge $e_3$. By the same analysis as above, $e_3$ connects only $v_4$ and $v_6$, otherwise there exist $3^+$-matching edges in $H_3$. Thus $H_3=T_1$. One can check that $T_1$ is matching double covered and so is in $\mathfrak{B}$. Hence $G$ may be $T_1$. If $G\neq T_1$. then $G$ has a $3^+$-matching edge by Lemma \ref{D-H-G}, a contradiction. Therefore, $G=T_1$ in this case.

{\bf Case 3.} If $G$ is obtained from $K_4$ by a series of edge additions and vertex expansions. Let $V(K_4)=\{v_1, v_2, v_3, v_4\}$. Since $K_4$ is $3$-regular, the first graph $H_1$ is obtained from $K_4$ by adding an edge $e_{1}$, say $e_1=v_1v_4$. Let the other edge with ends $v_1$ and $v_4$ be $e'$ ( see Figure \ref{T10}). Obviously $G\neq H_1$ since $H_1$ is not regular. In $H_1$, the degree of $v_1$ and $v_4$ in $H_1$ are four and no $2$-matching edges incident with $v_1$ or $v_4$, the degree of $v_2$ and $v_3$ are three. Hence by Lemmas \ref{operation2}, \ref{operation3}, and \ref{operation4}, we can obtain that the new graph $H_2$ is obtained from $H_1$ by an edge addition, an expansion of a vertex by a barrier of size two, or an expansion of two vertices by barriers of size two. We consider these three cases in the following.

\begin{figure}[h!]
  \centering
  \includegraphics[width=1.5 in]{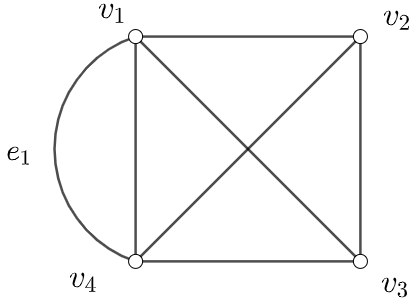}
   \vspace{-0.3cm}
  \caption{The brick $H_{1}$.}\label{T10}
\end{figure}

{\bf Case 3.1.} If $H_2$ is obtained from $H_1$ by an edge addition. Let $H_2=H_{1}+e_{2}$. From Lemma \ref{K-G}, the ends of $e_{2}$ can be $v_{1}$ and $v_{2}$ or $v_{2}$ and $v_{3}$ by symmetry.

{\bf Subcase 3.1.1.} If the ends of $e_{2}$ are $v_{1}$ and $v_{2}$. Since $H_2$ is not regular, $H_2$ is not in $\mathfrak{B}$ and $G\neq H_2$. Note that $H_2$ has a $5$-degree vertex $v_{1}$, two $4$-degree vertices $v_{2}$ and $v_{4}$ both incident with a $2$-matching edge. By Claim, the new graph $H_3$ can only be obtained from $H_2$ by an edge addition. Set $H_{3}=H_2+e_{3}$. From Lemma \ref{K-G}, the ends of $e_{3}$ can be $v_{1}$ and $v_{3}$, $v_{2}$ and $v_{3}$, or $v_{2}$ and $v_{4}$ by symmetry. No matter which case occurs, one can check that $H_3$ is not regular and $G\neq H_3$. Moreover, each $4$-degree vertex in $H_3$ is incident with a $2$-matching edge. Hence the new graph $H_4$ can only be obtained from $H_3$ by an edge addition by Claim. Set $H_{4}=H_{3}+e_{4}$. Repeat the above process, we can obtain the graph $K_{4}^6$ by edge additions only. Note that $K_{4}^6$ is a matching double covered brick. Hence $G$ may be $K_{4}^6$. If $G\neq K_{4}^6$, then $G$ must contain $3^+$-matching edges by Lemma \ref{D-H-G}, a contradiction. Hence $G=K_{4}^6$.

{\bf Subcase 3.1.2.} If the ends of $e_{2}$ are $v_{2}$ and $v_{3}$. By the same arguement as that in Subcase 3.1.1, we can obtain that $G$ may be $K_4^2$ or $K_4^4$.

{\bf Case 3.2.} If $H_2$ is obtained from $H_1$ by an expansion of a vertex by a barrier of size two. Without loss of generality, suppose that $v_1$ is split into $b_{1}$ and $b_{2}$, the new vertex $a$ is incident with $b_1$ and $b_2$, the new edge $e_{2}$ connects $a$ and a vertex in $H_{1}-v_{1}$. Base on the distribution of $e_{2}$ and the parallel edges between $v_{1}$ and $v_{4}$, the graph $H_2$ is isomorphic to one of graphs shown in Figure \ref{T11}.

\begin{figure}[h!]
  \centering
  \includegraphics[width=4.5 in]{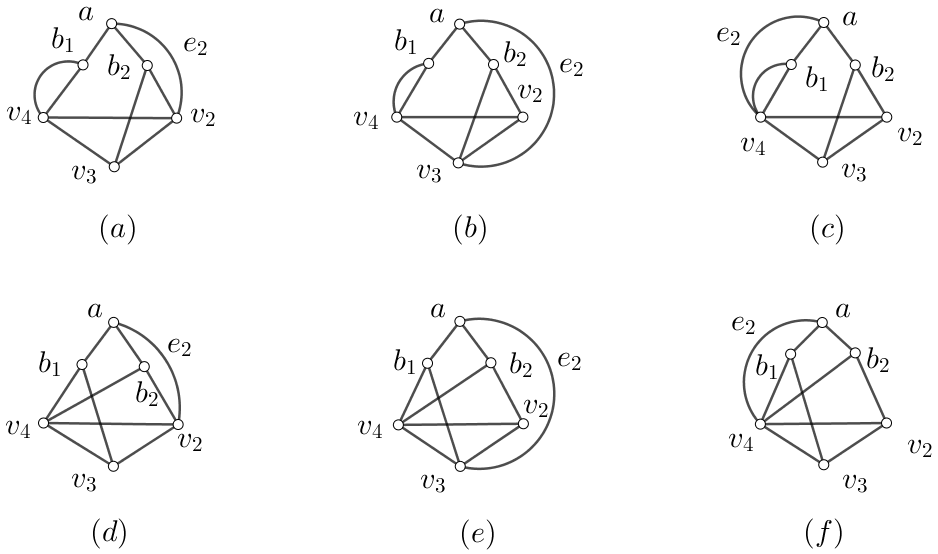}
   \vspace{-0.3cm}
  \caption{The graphs obtained from $H_{1}$ by an expansion of a vertex of a barrier of size two.}\label{T11}
\end{figure}

If $H_2$ is isomorphic to (a) or (b), the edge $b_2v_3$ or $b_2v_2$ is a $3^+$-matching edge in $H_2$. Hence by Lemma \ref{H-G}, the graph $G$ contains a $3^+$-matching edge, a contradiction. If $H_2$ is isomorphic to (c), then $e_2$ is not an admissible edge contradicting the fact that $H_2$ is a brick.

If $H_2$ is isomorphic to (d), then $H_2$ is isomorphic to the graph shown in Figure \ref{T13} (2). Through the same argument as that in Case 2, we obtain that $G=T_1$. If $H_2$ is isomorphic to (e), we can also prove that $G=T_1$.

If $H_2$ is isomorphic to (f), then $H_2$ is not regular and $G\neq H_2$. Since $H_2$ has only a $5$-degree vertex $v_{4}$, and the other five vertices are all $3$-degree vertices, the new graph $H_3$ followed by $H_2$ can only be obtained by an edge addition by Claim and Lemma \ref{operation3}. Let $H_3=H_2+e_{3}$. If the ends of $e_3$ are two adjacent vertices in $H_2$, say, $a$ and $b_1$, then $ab_1v_4b_2$ is a $4$-cycle containing parallel edges with ends $a$ and $b_1$. One can check that $v_2v_3$ is a $3^+$-matching edge of $H_3$. Hence by Lemma \ref{H-G},  $G$ contain $3^+$-matching edges, a contradiction.  Similarly, if the ends of $e_3$ are two other adjacent vertices in $H_2$, we can also get a contradiction. Therefore, the ends of $e_3$ are two nonadjacent vertices in $H_2$. If $e_{3}=av_{2}$, then $b_{1}v_{3}$ is a $3^+$-matching edge in $H_3$. If $e_{3}=av_3$, then $b_{2}v_{2}$ is a $3^+$-matching edge in $H_3$. If $e_{3}=b_1b_2$, then $v_{2}v_{3}$ is a $3^+$-matching edge in $H_3$. If $e_{3}=b_1v_2$, then  $ab_{2}$ is a $3^+$-matching edge in $H_3$. If $e_{3}=b_2v_3$, then $ab_{1}$ is a $3^+$-matching edge in $H_3$. In the above cases, we can always find a $3^+$-matching edge in $H_3$. Thus by Lemma \ref{H-G}, $G$ contain $3^+$-matching edges, a contradiction.

{\bf Case 3.3.} If $H_2$ is obtained from $H_1$ by an expansion of $v_1$ and $v_4$ by barriers of size two. Suppose that $v_{1}$ is split into $b_{1}$ and $b_{2}$, and $v_{4}$ is split into $b_{1}'$ and $b_{2}'$ in $H_2$. The new vertex $a$ is incident with $b_1$ and $b_2$, and the another new vertex $a'$ is incident with $b_{1}'$ and $b_{2}'$. Denote by $e_{2}$ the new edge connecting $a$ and $a'$ in $H_2$. Since $b_{1}$, $b_{2}$, $b'_{1}$ and $b'_{2}$ have at least one neighbor in $V(H_{1}-\{v_{1},v_{4}\})$, the edges with ends $v_{1}$ and $v_{4}$ cannot all be assigned to $b_{1}$ or $b_{2}$, nor can they all be assigned to $b'_{1}$ or $b'_{2}$. Therefore, $H_2$ is isomorphic to one of the following graphs shown in Figure \ref{T12}. We can check that $v_2v_3$ is a $3^+$-matching edge of $H_2$.  Hence by Lemma \ref{H-G}, $G$ contains a $3^+$-matching edge, a contradiction.

\begin{figure}[h!]
  \centering
  \includegraphics[width=4 in]{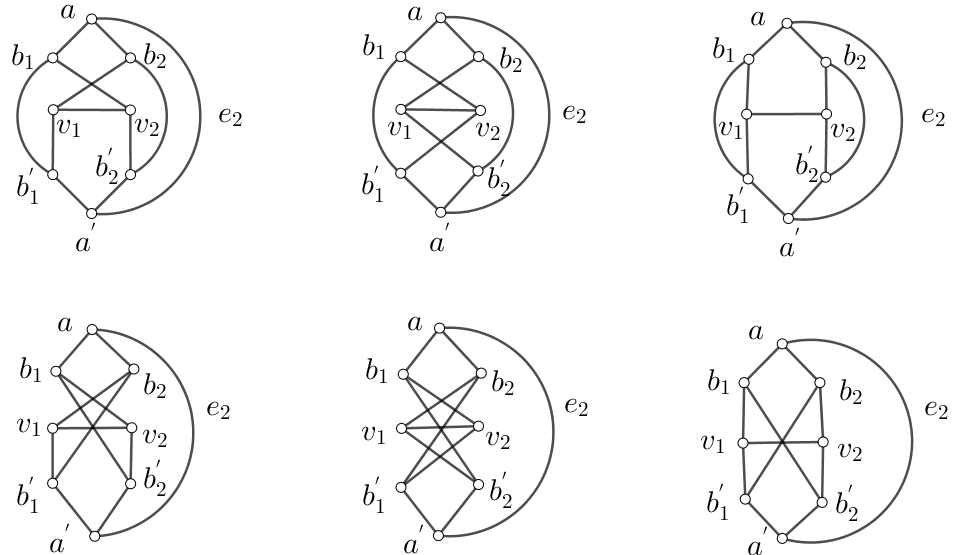}
  \vspace{-0.4cm}
  \caption{The $H_2$ obtained from $H_{1}$ by an expansion of two vertices by barriers of size two.}\label{T12}
\end{figure}

Combining the above cases, we can obtain that $G$ is $K_{4}$, $K_{4}^2$, $K_{4}^4$, the Petersen graph $P$, $T_{1}$ or $K_{4}^6$ if $G$ is a brick in $\mathfrak{B}$. The proof is complete. \hfill $\Box$.

\subsection{Matching covered graphs with nontrivial tight cuts in $\mathfrak{B}$}

We have characterized bricks in $\mathfrak{B}$ in Section 3.1. Now we consider the matching covered graph $G$ with nontrivial tight cuts. Before giving our results, we first present some significant lemmas will be used in the sequel.

\begin{lemma}\label{tightcut-i-edge}
Let $G$ be a matching covered graph with a nontrivial tight cut $C$. Denote by $G_{1}$ and $G_{2}$ the two $C$-contractions of $G$.  \\
(1) For any edge $f$ in $C$, if $f$ is a $i$-matching edge in $G_{1}$, and is a $j$-matching edge in $G_{2}$, then $f$ is a $ij$-matching edge in $G$;\\
(2) If $G_1$ or $G_2$ contain $3^+$-matching edges, then $G$ also contains $3^+$-matching edges.\\
(3) If $G$ is a graph in $\mathfrak{B}$ and $f$ is a $i$-matching edge in $G_{j}$, then every edge in the perfect matching of $G_{j}$ containing $f$ is also a $i$-matching edge in $G_{j}$, where $i,j \in\{1,2\}$.
\end{lemma}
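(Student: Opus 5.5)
The whole lemma rests on the standard bijection between perfect matchings of $G$ and compatible pairs of perfect matchings of the two $C$-contractions. Write $C=\nabla(X)$, $G_1=G\{X;\overline{x}\}$ and $G_2=G\{\overline{X};x\}$. Since $C$ is tight, every perfect matching $M$ of $G$ meets $C$ in exactly one edge $f$. Then $M\cap E(G_1)$ is a perfect matching $M_1$ of $G_1$, where $f$ covers $\overline{x}$, and likewise $M\cap E(G_2)=M_2$ is a perfect matching of $G_2$. Both contain $f$. Conversely, if $M_1$ and $M_2$ are perfect matchings of $G_1$ and $G_2$ containing the same edge $f\in C$, then $M_1\cup M_2$ is a perfect matching of $G$. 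So $M\mapsto (M_1,M_2)$ is a bijection onto pairs that agree on the unique $C$-edge.

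For (1), fix $f\in C$. The perfect matchings of $G$ containing $f$ correspond bijectively to pairs $(M_1,M_2)$ with $f\in M_1\cap M_2$. There are $i\cdot j$ such pairs, so $f$ is an $ij$-matching edge of $G$.

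For (2), suppose without loss of generality that $e$ is a $3^+$-matching edge of $G_1$, lying in distinct perfect matchings $N_1,N_2,N_3$. Each $N_k$ contains exactly one edge $f_k$ of $\nabla(\overline{x})=C$. Since $G_2$ is matching covered, $f_k$ lies in some perfect matching $N_k'$ of $G_2$. Then $N_k\cup N_k'$ for $k=1,2,3$ are perfect matchings of $G$ containing $e$ (if $e\in C$, it lies in all of them). They are distinct because their restrictions to $E(G_1)$ differ. Hence $e$ is a $3^+$-matching edge of $G$.

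For (3), let $G\in\mathfrak{B}$, and let $f$ be an $i$-matching edge of $G_j$ lying in a perfect matching $N$ of $G_j$. Take $g\in N$; we must show $g$ is an $i$-matching edge of $G_j$. By (2), $G_j$ has no $3^+$-matching edges, and $G_j$ is matching covered, so every edge of $G_j$ is a $1$- or $2$-matching edge. Suppose $f$ and $g$ have different types in $G_j$; by symmetry let $f$ be a $1$-matching edge and $g$ a $2$-matching edge. Let $h$ be the edge of $N\cap C$ and extend $N$ by a perfect matching $N'$ of the other contraction $G_{3-j}$ containing $h$, giving $M=N\cup N'$ in $G$. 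By the bijection, an edge $e\in E(G_j)\setminus C$ has in $G$ the count $\sum_{N\ni e}\phi(h_N)$, where $h_N$ is the $C$-edge of $N$ and $\phi(h)$ is the number of perfect matchings of $G_{3-j}$ containing $h$. For an edge of $C$ the count is $i\cdot\phi(h)$ by (1).

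The main obstacle is turning this into a contradiction. Here $f$ and $g$ both lie in $M$, so by the defining property of $\mathfrak{B}$ they have the same type $t\in\{1,2\}$ in $G$. For $f$ (a $1$-matching edge of $G_j$), its count in $G$ is $\phi(h)$, both when $f\notin C$ and when $f\in C$ (there $i=1$); so $t=\phi(h)$. For $g$ there are two cases. If $g\notin C$, its count is $\phi(h)+\phi(h')\ge \phi(h)+1>\phi(h)$, where $h'$ is the $C$-edge of $g$'s second perfect matching of $G_j$; this contradicts $t=\phi(h)$. If $g\in C$, then $g=h$ and its count is $2\phi(h)>\phi(h)$, again a contradiction. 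So $f$ and $g$ have the same type in $G_j$, and every edge of $N$ is an $i$-matching edge of $G_j$.
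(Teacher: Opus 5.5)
Your proposal is correct and follows essentially the paper's route: perfect matchings of the two $C$-contractions are glued along the unique cut edge, and (2) is argued exactly as in the paper. The differences are small improvements. In (1) the bijection gives the exact count $ij$ rather than only exhibiting $ij$ matchings. In (3) your comparison of $t=\phi(h)$ with a count of $g$ exceeding $\phi(h)$ merges the paper's two subcases: if the cut edge $a$ is a $1$-matching edge of the other contraction, $G$ gets a mixed-type perfect matching; if it is a $2$-matching edge, $G$ gets a $3^+$-matching edge. You also handle $f\in C$ and $g\in C$ explicitly.
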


\begin{proof}
For (1), let $M_1, M_2, \ldots, M_i$  be  perfect matchings of $G_1$ containing $f$, and $N_1, N_2, \ldots, N_j$ be perfect matchings of $G_2$ containing $f$. Then $M_k\cup N_l$ is perfect matching of $G$ containing $f$, where $1\leq k\leq i$, $1\leq l\leq j$. Hence $f$ is a $ij$-matching edges of $G$.

For (2), suppose that there exists a $3^+$-matching edge $e$ in $G_{1}$ or $G_{2}$, say $e\in E(G_1)$. Let $M_{1}$, $M_{2}$ and $M_{3}$ be three perfect matchings in $G_{1}$ containing $e$, and $e_{i}=M_{i}\cap C$, where $i=1,2,3$ ($e_{1}, e_{2}, e_{3}$ may be the same edge). Since $C$ is a nontrivial tight cut of $G$, $G_{1}$ and $G_{2}$ are matching covered. Hence $e_{i}$ is a admissible edge in $G_{2}$ for $1\leq i\leq 3$. Let $M'_{i}$ be the perfect matching in $G_{2}$ containing $e_{i}$, then $M_{i}\cup M'_{i}$ be the perfect matchings in $G$ containing $e$ for $1\leq i\leq 3$. Then $e$ is also a $3^+$-matching edge of $G$.

For (3), without loss of generality, we may suppose that there exists a perfect matching $M_1$ in $G_1$ such that $M_1$ contains a $1$-matching edge $e$ and a $2$-matching edge $f$. The other perfect matching of $G_1$ containing $f$ is denoted by $M_2$. Let $M_1\cap C=\{a\}$ and $M_2\cap C=\{b\}$ (may $a=b$). Since $C$ is a tight cut, $b$ is a admissible edge in $G_2$. Let $N'$ be the perfect matching in $G_2$ that contains $b$. If $a$ is a $1$-matching edge in $G_2$, and $N_1$ is the unique perfect matching of $G_2$ that contains $a$ (when $a=b$, $N_1=N'$). Then $M_1\cup N_1$ is the unique perfect matching in $G$ that contains $e$ and $f$, and $M_2\cup N'$ is the perfect matching in $G$ that contains $f$. Hence $e$ is a $1$-matching edge and $f$ is a $2$-matching edge in $M_1\cup N_1$ of $G$ contradicting the fact that $G$ is in $\mathfrak{B}$. If $a$ is contained in two perfect matchings $N_{1}$ and $N_{2}$ in $G_2$, then $M_{1}\cup N_1$, $M_1\cup N_2$, and $M_2\cup N'$ (may $N'=N_1$ or $N'=N_2$) are three perfect matchings in $G$ that contain $f$, a contradiction again. Thus (3) holds.
\end{proof}

\begin{lemma}\label{tight-cut&B}
Let $G$ be a graph with nontrivial tight cuts in $\mathfrak{B}$, then every graph produced in the tight cuts decomposition process belongs to $\mathfrak{B}$.
\end{lemma}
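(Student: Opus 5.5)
By induction on the number of tight-cut contractions, it suffices to prove the following: if $G\in\mathfrak{B}$ and $C$ is a nontrivial tight cut of $G$ with $C$-contractions $G_1$ and $G_2$, then $G_1,G_2\in\mathfrak{B}$. Any graph produced later in the decomposition arises from a graph already shown to be in $\mathfrak{B}$ by one more contraction along a nontrivial tight cut, so the statement follows once this single step is established.

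Fix such a cut. Since $C$ is tight, $G_1$ and $G_2$ are matching covered, so every edge of each is a $1^+$-matching edge. It remains to check two things for $G_1$ (the case of $G_2$ is symmetric): that $G_1$ has no $3^+$-matching edge, and that no perfect matching of $G_1$ mixes $1$-matching and $2$-matching edges.

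\textbf{No $3^+$-matching edges.} This is exactly the contrapositive of Lemma \ref{tightcut-i-edge}(2). If $G_1$ had a $3^+$-matching edge, then $G$ would too, which is impossible because $G\in\mathfrak{B}$. Hence every edge of $G_1$ is a $1$- or $2$-matching edge.

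\textbf{No mixed perfect matching.} This is the content of Lemma \ref{tightcut-i-edge}(3), whose hypothesis is precisely $G\in\mathfrak{B}$. It says that if $f$ is an $i$-matching edge in $G_j$, then every edge of a perfect matching of $G_j$ containing $f$ is also an $i$-matching edge. I would re-read its proof to confirm that it covers all cases, namely $a=b$ and $a\neq b$, with $a$ being a $1$-matching or a $2$-matching edge of $G_2$. The argument lifts $M_1$ and $M_2$ to perfect matchings of $G$ through perfect matchings of $G_2$ containing the cut edges. This yields either a mixed perfect matching in $G$ or a $3^+$-matching edge in $G$, both of which are contradictions.

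Two further checks are needed. First, $G_1$ must be connected; this holds because it is matching covered. Second, the contraction may create parallel edges, which the definition of $\mathfrak{B}$ allows since graphs here may have multiple edges.

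The main obstacle I expect is making sure that Lemma \ref{tightcut-i-edge}(3) really handles every configuration, in particular when the cut edge $a$ itself is one of the $1$-matching or $2$-matching edges under consideration. In that situation the bookkeeping of perfect matchings of $G$ of the form $M_k\cup N_l$ has to be done carefully, using Lemma \ref{tightcut-i-edge}(1) to count them exactly as $ij$.
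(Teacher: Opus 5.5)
Your proposal is correct and follows the paper's proof essentially verbatim. The paper also reduces to a single nontrivial tight cut, uses Lemma~\ref{tightcut-i-edge}(2) to exclude $3^+$-matching edges in the contractions and Lemma~\ref{tightcut-i-edge}(3) to exclude perfect matchings that mix $1$- and $2$-matching edges, and then iterates along the decomposition. The configurations you worry about, where a cut edge coincides with the $1$-matching or $2$-matching edge, cause no trouble: by tightness, an edge of $G_1$ outside $C$ lies in as many perfect matchings of $G$ as the sum, over perfect matchings $M$ of $G_1$ containing it, of the number of perfect matchings of $G_2$ containing the edge of $M\cap C$, while a cut edge's count is the product, and each case still yields a mixed perfect matching or a $3^+$-matching edge of $G$.
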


\begin{proof}
Let $C$ be a nontrivial tight cut of $G$, $G_{1}$ and $G_{2}$ be two $C$-contractions of $G$. Note that $G\in \mathfrak{B}$, then $G$ is matching covered, so are $G_1$ and $G_2$. By Lemma \ref{tightcut-i-edge} (2), $G_{1}$ and $G_{2}$ contain no $3^+$-matching edges, otherwise $G$ contains $3^+$-matching edges, a contradiction. Moreover, if $f$ is a $i$-matching edge in $G_{j}$, then by Lemma \ref{tightcut-i-edge}, every edge in the perfect matching of $G_{j}$ containing $f$ is also a $i$-matching edge in $G_{j}$, where $i,j \in\{1,2\}$. Hence $G_{1}$ and $G_{2}$ are graphs in $\mathfrak{B}$. By the above analysis, we can obtain that every graph produced in the tight cuts decomposition process belongs to $\mathfrak{B}$.
\end{proof}

\begin{lemma}\label{tight-cut}
Let $G$ be a matching double covered graph with a nontrivial tight cut $C$, $G_{1}$ and $G_{2}$ be two $C$-contractions of $G$. Then $G_{1}$, $G_{2}\in\mathfrak{B}$. For any edge $f$ in $C$, if $f$ is a $i$-matching edge in $G_{1}$, then $f$ is a $(3-i)$-matching edge in $G_{2}$, where $i\in \{1,2\}$.
\end{lemma}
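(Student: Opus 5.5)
Since $G$ is matching double covered, it has no $1$-matching edges and no $3^+$-matching edges. Hence $G\in\mathfrak{B}$ trivially: every edge is a $2$-matching edge, so no perfect matching mixes the two types. Lemma \ref{tight-cut&B} then gives $G_1,G_2\in\mathfrak{B}$ directly. (Equivalently, Lemma \ref{tightcut-i-edge}(2) rules out $3^+$-matching edges in $G_j$, and Lemma \ref{tightcut-i-edge}(3) rules out mixed perfect matchings.) In particular every edge of $G_1$ and of $G_2$ is a $1$- or $2$-matching edge there, and both contractions are matching covered because $C$ is tight.

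For the second assertion I would take $f\in C$, which is an edge of both $G_1$ and $G_2$. Let $f$ be an $i$-matching edge in $G_1$ and a $j$-matching edge in $G_2$, with $i,j\in\{1,2\}$ by the first part. The key step is to upgrade Lemma \ref{tightcut-i-edge}(1) from ``at least $ij$'' to ``exactly $ij$''. Since $C$ is tight, every perfect matching $M$ of $G$ meets $C$ in exactly one edge. So if $f\in M$, then $M\cap E(G_1)$ (with the contraction vertex covered by $f$) is a perfect matching of $G_1$ containing $f$, and likewise for $G_2$. This restriction map is a bijection between perfect matchings of $G$ containing $f$ and pairs $(M_k,N_l)$ of perfect matchings of $G_1$ and $G_2$ containing $f$. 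Thus $f$ lies in exactly $ij$ perfect matchings of $G$.

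Since $f$ is a $2$-matching edge of $G$, we get $ij=2$ with $i,j\in\{1,2\}$. This forces $\{i,j\}=\{1,2\}$, i.e.\ $j=3-i$.

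The only point needing care is the exactness of the count in the bijection, since Lemma \ref{tightcut-i-edge}(1) as proved only exhibits $ij$ distinct matchings. Injectivity is clear because the two restrictions determine $M$. Surjectivity onto exactly these pairs uses tightness: $|M\cap C|=1$ forces that single edge to be $f$, so each restriction is a genuine perfect matching of the contraction containing $f$. Everything else follows directly from the earlier lemmas.
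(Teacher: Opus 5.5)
Your proof is correct and follows the paper's route: you get $G_1,G_2\in\mathfrak{B}$ from the lemma that tight-cut contractions of graphs in $\mathfrak{B}$ stay in $\mathfrak{B}$, then use the product count of Lemma~\ref{tightcut-i-edge}(1) to exclude $i=j$. Your explicit bijection is a worthwhile addition: the paper's proof of Lemma~\ref{tightcut-i-edge}(1) only exhibits $ij$ distinct perfect matchings, and the matching upper bound you derive from tightness is exactly what is needed to rule out the case $i=j=1$.
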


\begin{proof}
Obviously $G$ is in $\mathfrak{B}$, then $G_{1}$, $G_{2}\in\mathfrak{B}$ by Lemma \ref{tight-cut&B}. Suppose that $f$ is a $i$-matching edge in both $G_1$ and $G_2$ for $1\leq i\leq 2$, then by Lemma \ref{tightcut-i-edge} (1), $f$ is a $1$-matching edge or a $4$-matching edge of $G$, a contradiction.
\end{proof}

\begin{lemma}\label{C-DMC}
Let $G$ be a matching covered graph with nontrivial tight cut $C$, $G_{1}$ and $G_{2}$ be two $C$-contractions of $G$. \\
(1) If the numbers of 2-matching edges incident with the contraction vertices in $G_{1}$ and $G_{2}$ are more than half, then $G\notin \mathfrak{B}$.\\
(2) If $G\in \mathfrak{B}$ and $G_1$ is matching double covered, then $G_2$ is $K_{4}$ or an even cycle.
\end{lemma}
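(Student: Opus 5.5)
Let $x$ be the contraction vertex of $G_1$ and $y$ the contraction vertex of $G_2$. Both have $\nabla_{G_1}(x)=\nabla_{G_2}(y)=C$, so $d_{G_1}(x)=d_{G_2}(y)=|C|$. The plan rests on Lemma \ref{tightcut-i-edge}(1): an edge $f\in C$ that is an $i$-matching edge in $G_1$ and a $j$-matching edge in $G_2$ is an $ij$-matching edge in $G$. This formula is exact, not just a lower bound: every perfect matching of $G$ meets $C$ in exactly one edge, and restricting it gives a bijection between perfect matchings of $G$ through $f$ and pairs consisting of a perfect matching of $G_1$ through $f$ and one of $G_2$ through $f$.

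For (1), I first reduce to the case $G_1,G_2$ have no $3^+$-matching edges. Otherwise Lemma \ref{tightcut-i-edge}(2) gives a $3^+$-matching edge of $G$, and then $G\notin\mathfrak{B}$. So every edge of $C$ is a $1$- or $2$-matching edge in each contraction. By hypothesis more than half of the $|C|$ edges at $x$ are $2$-matching edges in $G_1$, and likewise more than half of the edges at $y$ are $2$-matching edges in $G_2$. Since both sets live in the same $C$, pigeonhole gives an edge $f\in C$ that is a $2$-matching edge in both. By Lemma \ref{tightcut-i-edge}(1), $f$ is a $4$-matching edge of $G$, so $G\notin\mathfrak{B}$.

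For (2), every edge of $C$ is a $2$-matching edge in $G_1$. If some $f\in C$ were a $2$-matching edge in $G_2$, it would be a $4$-matching edge of $G$, which is impossible. Hence every edge at $y$ is a $1$-matching edge in $G_2$. By Lemma \ref{tight-cut&B}, $G_2\in\mathfrak{B}$. Every perfect matching of $G_2$ contains an edge at $y$, which is a $1$-matching edge, so by the defining property of $\mathfrak{B}$ (equivalently Lemma \ref{tightcut-i-edge}(3)) every edge of that perfect matching is a $1$-matching edge. Since $G_2$ is matching covered, every edge lies in some perfect matching, and therefore every edge of $G_2$ is a $1$-matching edge. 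Lemma \ref{uniqe} then gives $G_2\in\{K_2^*,K_4,C_{2n}\}$.

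It remains to exclude $K_2^*$, and I expect this to be the main obstacle. The tight cut $C$ is nontrivial, so both shores have at least three vertices, and $G_2$, the contraction of one shore plus a single vertex, has at least four vertices. This rules out $K_2^*$. The parallel-edge version of $C_4$ is excluded by Lemma \ref{C-3+} together with the $1$-matching property (in fact Lemma \ref{uniqe} already lists only the simple cycle). Thus $G_2$ is $K_4$ or an even cycle.
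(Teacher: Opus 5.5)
Your proof is correct and follows the paper's argument. In (1) you use pigeonhole on the common edge set $C$ to find an edge that is a $2$-matching edge in both contractions, hence lies in four perfect matchings of $G$. In (2) you combine the same counting with Lemma \ref{tightcut-i-edge}, the fact that both contractions lie in $\mathfrak{B}$, and Lemma \ref{uniqe}, ruling out $K_2^*$ by nontriviality of $C$. Your version of (2) is slightly more complete than the paper's: you explicitly use the no-mixing property of $G_2$ to pass from ``every edge at the contraction vertex is a $1$-matching edge'' to ``every edge of $G_2$ is a $1$-matching edge''. The paper leaves this step implicit when it asserts the existence of a $2$-matching edge $\overline{x}v$ in $G_2$.
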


\begin{proof}
Suppose that $x$ and $\overline{x}$ are the contraction vertices with degree $d$ of $G_{1}$ and $G_{2}$ respectively. Then $G$ can be obtained by splicing $G_{1}$ and $G_{2}$ at $x$ and $\overline{x}$. For (1), if the number of $2$-matching edges incident with $x$ and $\overline{x}$ of $G_{1}$ and $G_{2}$ is more than $d/2$, then there exist a $2$-matching edge $ux$ of $G_{1}$ and a $2$-matching edge $\overline{x}v$ of $G_{2}$ such that $uv$ is an edge of $G$. Let $M_{1}$ and $M_{2}$ be two perfect matchings of $G_{1}$ containing $ux$, and $M_{1}'$ and $M_{2}'$ be two perfect matchings of $G_{2}$ containing $\overline{x}v$. Hence $(M_{1}-ux)\cup (M_{1}'-\overline{x}v)\cup uv$, $(M_{1}-ux)\cup (M_{2}'-\overline{x}v)\cup uv$, $(M_{2}-ux)\cup (M_{1}'-\overline{x}v)\cup uv$ and $(M_{2}-ux)\cup (M_{2}'-\overline{x}v)\cup uv$ are four perfect matchings in $G$ containing $uv$. Hence $uv$ is a $3^+$-matching edge of $G$, so $G\notin\mathfrak{B}$.

For (2), since $G\in \mathfrak{B}$, we have $G_2\in \mathfrak{B}$ by Lemma \ref{tight-cut&B}. Note that the tight cut $C$ is nontrivial, then $G_2$ is not $K_{2}^*$. If $G_2$ is not $K_{4}$ or an even cycle, then $G_2$ there exist $2^+$-matching edges by Lemma \ref{uniqe}. Note that $G_1\in \mathfrak{B}$ and every vertex in $G_1$ is incident with a $2$-matching edge. Hence there exist a $2$-matching edge $ux$ of $G_{1}$ and a $2$-matching edge $\overline{x}v$ of $G_{2}$ such that $uv$ is an edge of $G$. Through the same argument as that in (1), we can obtain that $uv$ is a $3^+$-matching edge of $G$, so $G\notin\mathfrak{B}$, a contradiction. Hence we can obtain that $G_2$ is $K_{4}$ or an even cycle.
\end{proof}

Let $G$ be a nonbipartite matching covered graph with nontrivial tight cuts in $\mathfrak{B}$. Then by Lemma \ref{tight-cut&B}, one can achieve that the bricks and braces obtained from $G$ through the tight cut decomposition procedure are also in $\mathfrak{B}$. Combining Lemmas \ref{B-regular}, \ref{C-DMC} and Theorems \ref{brace-B}, \ref{brick-B}, we can get that $G$ is $k$-regular graph in $\mathfrak{B}$, where $k\in\{2,3,4\}$. If $G$ is $2$-regular in $\mathfrak{B}$, one can obtain that $G$ is an even cycle since $G$ is matching covered. However, it contradicts with the fact that $G$ is nonbipartite. Hence we only need to consider the cases that $G$ is $k$-regular for $k=3$ and $k=4$ and achieve the following results, which proofs level in the next subsection.

\begin{theorem}\label{3-regular}
Let $G$ be a nonbipartite $3$-regular matching covered graph with a nontrivial cut. Then $G$ is in $\mathfrak{B}$ if and only if $G$ is $C_{\bigtriangleup}^1$, $T_2$, $T_4$, $T_5$ or $T_6$ (see Figure \ref{$T_{5}-T_{8}$}).
\end{theorem}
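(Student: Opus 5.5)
Sufficiency is a finite check: for each of $C_{\bigtriangleup}^1$, $T_2$, $T_4$, $T_5$, $T_6$ we verify that it is $3$-regular, nonbipartite and matching covered with a nontrivial tight cut. We then count the perfect matchings through every edge and confirm that each count is $1$ or $2$ and that no perfect matching mixes the two types. The cheapest way to do this is through the nontrivial tight cut $C$ of each graph, using Lemma \ref{tightcut-i-edge}(1) and the product structure of perfect matchings across a tight cut: every perfect matching of $G$ is $M_1\cup M_2$ with $M_i$ a perfect matching of the $C$-contraction $G_i$, and $M_1\cap C=M_2\cap C$.

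For necessity, let $G\in\mathfrak{B}$ be $3$-regular and nonbipartite with a nontrivial tight cut $C$. Take a tight cut decomposition. By Lemma \ref{tight-cut&B} every piece lies in $\mathfrak{B}$, and by Lemma \ref{B-regular} every piece is regular. A $C$-contraction of a $3$-regular graph has contraction vertex of degree $|C|$, and regularity forces $|C|=3$. Hence every piece is a cubic brick or brace in $\mathfrak{B}$. By Theorems \ref{brace-B} and \ref{brick-B} the only candidates are $K_4$, the Petersen graph $P$, and $K_{3,3}$. Among these, $K_4$ is $1$-matching covered, while $P$ and $K_{3,3}$ are matching double covered; the multigraph cases $K_4^{2}$, $C_4^{2}$, etc. are excluded by degree $3$.

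Next we describe how $G$ is rebuilt from these pieces by splicing. If one side $G_1$ of a tight cut is matching double covered, then Lemma \ref{C-DMC}(2) forces $G_2$ to be $K_4$ or an even cycle, and by cubicity $G_2=K_4$ or a contraction reducible to $K_4$'s. Lemma \ref{tight-cut} further says that each cut edge is a $1$-edge on one side and a $2$-edge on the other. Consequently, two matching double covered pieces can never be spliced directly, and Lemma \ref{C-DMC}(1) limits the $2$-edges at the splicing vertices. So the decomposition tree has at most one piece from $\{P,K_{3,3}\}$, and every other piece is $K_4$. Splicing $K_4$'s together gives ladder-like or prism-like cubic graphs (for example $C_{\bigtriangleup}^1$, the triangular prism). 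We determine how many $K_4$ splicings are possible before a $3^+$-matching edge appears, by the same reasoning as Lemma \ref{L-two-cycles}: two disjoint alternating cycles whose complement has a perfect matching produce a $3^+$-matching edge.

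The case analysis then splits into two parts.

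\textbf{All pieces are $K_4$.} Splicing two $K_4$'s gives the prism. We show that a third splice creates either an edge in three perfect matchings or a mixed perfect matching. This leaves only $C_{\bigtriangleup}^1$ and possibly $T_2$.

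\textbf{One piece is $P$ or $K_{3,3}$, all others $K_4$.} Here we splice $K_4$'s at vertices of $P$ or $K_{3,3}$. By Lemma \ref{tightcut-i-edge}(1), the cut edges become $2$-edges in $G$. The mixing condition of $\mathfrak{B}$ (Lemma \ref{tightcut-i-edge}(3)) then forces that the $K_4$-sides contain no $1$-edge lying in a perfect matching together with a cut edge. We check which vertex sets of $P$ and $K_{3,3}$ can be replaced by triangles; this should yield $T_4$, $T_5$, $T_6$. For instance, expanding all vertices of one colour class of $K_{3,3}$, or one vertex, may be possible, while expanding two adjacent vertices may fail.

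The main obstacle is this last enumeration. For each admissible splicing, the number of perfect matchings through every edge must be tracked exactly, and the bipartite, Petersen and mixed configurations must be distinguished up to isomorphism. I would first attack it by identifying the smallest forbidden configurations (two spliced adjacent vertices, three spliced $K_4$'s in a chain) and showing that each yields a $3^+$-matching edge via Lemma \ref{tightcut-i-edge}(1) and Lemma \ref{L-two-cycles}.
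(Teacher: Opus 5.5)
\textbf{Main gap: the brace $C_4^2$ is wrongly discarded.} You exclude $C_4^2$ ``by degree~3'', but it is cubic. Every graph in Theorem \ref{brace-B} is regular by Lemma \ref{B-regular}, and $C_4^2$ is the $4$-cycle with a perfect matching doubled. So the cubic bricks and braces in $\mathfrak{B}$ are $K_4$, $P$, $C_4^2$ and $K_{3,3}$, which are the sets $\mathfrak{B}_1,\mathfrak{B}_2$ the paper uses.

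This omission loses two of the five target graphs. $C_{\bigtriangleup}^1$ is by definition $C_4^2\odot K_4$, and it still contains a pair of parallel edges. $T_2$ arises in the paper from $C_{\bigtriangleup}^1$ by one more triangle replacement at an end of those parallel edges.

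You compensate by putting these graphs in the ``all pieces are $K_4$'' case and identifying $C_{\bigtriangleup}^1$ with the prism. That case is in fact empty, for two reasons:
\begin{itemize}
\item $K_4\odot K_4=\overline{C_6}$ is a basic brick, so two $K_4$'s are never the two sides of a nontrivial tight cut.
\item $\overline{C_6}\notin\mathfrak{B}$ anyway. With triangles $a_1a_2a_3$, $b_1b_2b_3$ and rungs $a_ib_i$, the rung $a_1b_1$ lies in two perfect matchings. One of them, $\{a_1b_1,a_2a_3,b_2b_3\}$, also contains the $1$-matching edge $a_2a_3$.
\end{itemize}
So your case analysis rests on a false list of pieces and never legitimately produces $C_{\bigtriangleup}^1$ or $T_2$.

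\textbf{Two further steps need repair.}

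First, Lemma \ref{tight-cut} requires $G$ to be matching double covered, which is not assumed here. In $C_{\bigtriangleup}^1$, the cut edge coming from the single edge of $C_4^2$ at the replaced vertex is a $1$-matching edge on both sides.

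Second, applying Lemma \ref{C-DMC} to leaves alone does not give ``at most one non-$K_4$ piece'', since two such pieces could be separated by $K_4$'s. The missing observation is the one underlying the paper's Claim~1:
\begin{itemize}
\item For a cubic $H\in\mathfrak{B}$, every perfect matching uses exactly one edge at each vertex, so every vertex carries exactly $\Phi(H)-3$ edges that are $2$-matching.
\item $\Phi(H)=4$ is impossible. The $2$-matching edges would then form a perfect matching, and by the no-mixing condition it would be the only perfect matching containing any of them.
\item $\Phi(H)=3$ forces $H=K_4$ by Lemma \ref{uniqe}, since all graphs here have at least four vertices.
\end{itemize}
Hence every intermediate graph other than $K_4$ has at least two $2$-matching edges at each vertex. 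Lemma \ref{C-DMC}(1) then forces the other side of every tight cut to be $K_4$. It follows that $G$ is obtained from a single base in $\{P,C_4^2,K_{3,3}\}$ by successive triangle replacements, with every intermediate graph in $\mathfrak{B}$.

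The enumeration you leave as a conjecture is then exactly Lemmas \ref{no-p-tri}, \ref{no-c-tri} and \ref{no-k-tri}:
\begin{itemize}
\item $P$ admits no replacement.
\item $C_4^2$ yields $C_{\bigtriangleup}^1$ and then $T_2$.
\item $K_{3,3}$ admits replacements only within one colour class, yielding $T_4$, $T_5$ and $T_6$.
\end{itemize}
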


\begin{theorem}\label{4-regular}
Let $G$ be a nonbipartite $4$-regular matching covered graph with a nontrivial cut. Then $G$ is in $\mathfrak{B}$ if and only if $G$ is $T_3$ (see Figure \ref{$T_{5}-T_{8}$}).
\end{theorem}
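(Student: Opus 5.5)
Sufficiency is a direct check: $T_3$ is a $4$-regular, nonbipartite graph with a nontrivial tight cut, and, as noted before Corollary \ref{cor3}, it is matching double covered. Hence it lies in $\mathfrak{B}$.

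For necessity, let $G\in\mathfrak{B}$ be nonbipartite and $4$-regular, with a nontrivial tight cut $C=\nabla(X)$, and let $G_1=G\{X;\overline{x}\}$ and $G_2=G\{\overline{X};x\}$ be the $C$-contractions. By Lemma \ref{tight-cut&B}, both contractions lie in $\mathfrak{B}$, and so does every brick and brace in a tight cut decomposition of $G$. By Lemma \ref{B-regular}, each of $G_1,G_2$ is regular; since the non-contraction vertices have degree $4$, so does the contraction vertex, so $|C|=4$ and both contractions are $4$-regular. Theorems \ref{brace-B} and \ref{brick-B} list the bipartite members and the bricks of $\mathfrak{B}$, and the $4$-regular ones are $C_4^2$ and $K_4^2$. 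Here $K_4^2$ means $K_4$ with two disjoint pairs of parallel edges; this is the reading that makes it $4$-regular, and I would check it against Figure \ref{T14}. Every further $4$-regular candidate contraction is built from these by tight cuts, so I would induct on $|V(G)|$. Choose $C$ so that $G_1$ is a brick or a brace, i.e.\ $C$ is extremal; then $G_1\in\{C_4^2,K_4^2\}$.

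Next, track which edges are $1$- and $2$-matching. In $C_4^2$ with doubled edges $u_1u_2$ and $u_3u_4$, the doubled edges are $1$-matching. The single edges $u_2u_3$ and $u_4u_1$ are $2$-matching, because $u_2u_3$ combines with either copy of $u_4u_1$. In $K_4^2$ with doubled pairs $v_1v_2$ and $v_3v_4$, the four doubled edges are $2$-matching and the single edges $v_1v_3,v_2v_4,v_1v_4,v_2v_3$ are $1$-matching. In either case each vertex, including the contraction vertex, meets exactly two $2$-matching edges, which is half its degree. Now apply Lemma \ref{tightcut-i-edge}(1) to the edges of $C$: an edge of $C$ that is $2$-matching in both contractions would be $4$-matching in $G$. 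Lemma \ref{C-DMC}(1) rules out more than half on both sides. The question is whether exactly half on both sides is possible.

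So consider the splice. The four edges of $C$ carry labels $(i,j)$, where $i$ is the matching count in $G_1$ and $j$ in $G_2$. The label $(2,2)$ is forbidden, which forces the two $2$-matching edges at $\overline{x}$ in $G_1$ to be exactly the $1$-matching edges at $x$ in $G_2$, and conversely. Lemma \ref{tightcut-i-edge}(3) then constrains perfect matchings through $C$: matchings of $G_1$ using a $2$-matching cut edge must combine with unique completions in $G_2$ consistently. Suppose $G_2$ is itself a brick or brace. Then there are only finitely many splicings of two graphs from $\{C_4^2,K_4^2\}$ with this complementary labelling, and I would enumerate them. The claim to verify is that all of them except $T_3$ produce an edge in three perfect matchings, or a perfect matching mixing a $1$-matching and a $2$-matching edge; the latter is the typical failure when both sides are $C_4^2$, since the result is bipartite, and bipartite splices are excluded anyway by nonbipartiteness. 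Nonbipartiteness forces at least one factor to be $K_4^2$. In $K_4^2\odot K_4^2$ every matching double covered requirement can be checked directly, and this should yield $T_3$.

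The main obstacle is the case where $G_2$ is not a brick or brace. Here I would use induction: $G_2$ is a smaller $4$-regular graph in $\mathfrak{B}$ with a nontrivial tight cut, so by induction $G_2=T_3$ if it is nonbipartite. If $G_2$ is bipartite, Theorem \ref{brace-B} says no such graph exists except $C_4^2$, which has no nontrivial tight cut. It then remains to show that splicing $C_4^2$ or $K_4^2$ onto any vertex of $T_3$ creates a $3^+$-matching edge. This follows from Lemma \ref{C-DMC}(2) when $T_3$ is matching double covered, since the other side would have to be $K_4$ or an even cycle, neither of which is $4$-regular. That closes the induction.
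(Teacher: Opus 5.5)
There is a genuine gap: your overall strategy matches the paper's, but the case analysis rests on a wrong list of candidate contractions, so a case is missing. The strategy in question is to reduce to a tight cut whose contractions are $4$-regular bricks or braces in $\mathfrak{B}$, discard matching double covered pieces with Lemma~\ref{C-DMC}, and identify $K_4^2$ spliced with $K_4^2$ as $T_3$.

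\begin{itemize}
\item $C_4^2$ is not $4$-regular. As you describe it ($C_4$ with $u_1u_2$ and $u_3u_4$ doubled), every vertex has degree $3$, which is why it appears only in the cubic theorem.
\item Your matching counts for $C_4^2$ are also reversed. Each copy of $u_1u_2$ lies in two perfect matchings (one with each copy of $u_3u_4$), while $u_2u_3$ and $u_4u_1$ lie only in $\{u_2u_3,u_4u_1\}$.
\item By Theorems~\ref{brace-B} and~\ref{brick-B}, the $4$-regular candidates on at least four vertices are the brace $C_4^4$ ($C_4$ with every edge doubled) and the bricks $K_4^2$ \emph{and} $T_1$. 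Here $T_1$ is the six-vertex brick $\overline{C_6}+e_1+e_2+e_3$.
\end{itemize}
Your enumeration never considers a contraction equal to $T_1$ or $C_4^4$. So the claim that the only surviving splice is $K_4^2$ with $K_4^2$ is not established as written.

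The repair uses the tool you already invoke at the end. Both $T_1$ and $C_4^4$ are matching double covered, so by Lemma~\ref{C-DMC}(2) the other contraction would have to be $K_4$ or an even cycle. Neither of these is $4$-regular, a contradiction. That leaves $K_4^2$ on both sides, where your complementary-labelling argument forces $T_3$. The bipartite case of your induction goes through with $C_4^4$ in place of $C_4^2$. Your exclusion of any further splice onto $T_3$ via Lemma~\ref{C-DMC}(2) is correct, and cleaner than the paper's direct verification.
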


Based on the results of Theorems \ref{3-regular} and \ref{4-regular}, The main result of this section (Theorem \ref{TH3}) follows directly.

\subsubsection{Proofs of Theorems \ref{3-regular} and \ref{4-regular}}

Let $G$ and $H$ be two vertex-disjoint graphs. For $u\in V(G)$ and $v\in V(H)$ with same degree $d$, let $u_{1}u, u_{2}u,\ldots,u_{d}u$ be the edges incident with $u$ in $G$ and let $v_{1}v, v_{2}v, \ldots,v_{d}v$ be the edges incident with $v$ in $H$.
Add new edges joining $u_{i}$ in $G-u$ and $v_{i}$ in $H-v$ for $1\leq i \leq d$.
The resulting graph is called {\it the splicing of $G$ and $H$ at $u$ and $v$}, denoted by $G(u)\odot H(v)$. In particular, if $H=K_{4}$ and $u$ is a $3$-degree vertex in $G$, the splicing of $G$ and at $u$ with $K_{4}$, denoted by $G(u)\odot K_{4}$, is called {\it the triangle replacement on $G$ at $u$}. The replaced triangle at $u$ is called the {\it replacement triangle}. Denote by $G\odot K_{4}$ the splicing of a vertex-transitive graph $G$ and $K_{4}$. If $G$ and $H$ are matching covered graphs, then $G(u)\odot H(v)$ is also matching covered.

Let $C_{\bigtriangleup}^1=C_4^2\odot K_{4}$, where $C_4^2$ is depicted in Figure \ref{T6}. Let $C_{\bigtriangleup}^2$ be the graph obtained from $C_4^2$ by performing a triangle replacement on two nonadjacent vertices or two adjacent vertices that not the end vertices of parallel edges. It is easy to verify that $C_{\bigtriangleup}^1$ and $C_{\bigtriangleup}^2$ belong to $\mathfrak{B}$.

\begin{lemma}\label{no-c-tri}
If $G'$ is a graph obtained from $C_{\bigtriangleup}^1$ by performing a triangle replacement at a vertex that is not the end of the multiple edges, then $G'$ contains a $3^+$-matching edge.
\end{lemma}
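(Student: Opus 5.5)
The plan is to count perfect matchings directly rather than through tight-cut arguments. Lemma \ref{tightcut-i-edge} is of little use here: the $K_4$ side of each replacement triangle has only $1$-matching edges, so part (1) gives products that are at most $2$, and Lemma \ref{C-DMC}(1) cannot apply either.

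The key tool is the following correspondence. Let $F$ be a graph and $u$ a vertex of degree $3$ with edges $e_1,e_2,e_3$. In $F(u)\odot K_4$ the cut around the replacement triangle $t_1t_2t_3$ (with $t_i$ incident with $e_i$) is tight. Every perfect matching of $F$ that uses $e_k$ extends in exactly one way, namely by adding $t_it_j$ for $\{i,j,k\}=\{1,2,3\}$. Hence
\begin{itemize}
\item perfect matchings of $F(u)\odot K_4$ are in bijection with those of $F$;
\item every edge outside the triangle keeps the number of perfect matchings containing it;
\item the triangle edge $t_it_j$ lies in exactly as many perfect matchings as $e_k$ does in $F$.
\end{itemize}
Applying this twice reduces the problem to the base graph $C_4^2$. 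Each edge of $G'$ is either an original edge or a triangle edge labelled by an original edge, and its count is that of the corresponding edge of $C_4^2$.

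Next I would fix coordinates. Write $V(C_4^2)=\{v_1,v_2,v_3,v_4\}$, perform the first triangle replacement at $v_1$ to get $C_{\bigtriangleup}^1$, and list the perfect matchings of $C_4^2$ explicitly, recording how often each edge occurs. Then I would list the admissible positions for the second replacement, namely the vertices of $C_{\bigtriangleup}^1$ that are not ends of the multiple edges. There are only a few of these up to the symmetry of $C_{\bigtriangleup}^1$: the triangle vertices, and the untouched vertex or vertices of degree $3$ on the cycle. For each position I would write down $G'$ and use the correspondence to find an edge whose count is at least $3$.

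I expect the main obstacle to be that the correspondence preserves all counts. A $3^+$-matching edge therefore cannot appear merely because the bijection is applied. It must come from splitting the parallel edges: a multiple edge incident with the chosen vertex becomes several distinct edges joined to different triangle vertices. After that split, a single original edge on the opposite side can be completed in more ways than before, and I would locate the $3^+$-matching edge in the perfect matchings through that split bundle. Concretely, I would take the edge $g$ opposite the split bundle in the $4$-cycle structure. The parallel copies now end at different triangle vertices, and each gives a different completion inside the new triangle. Together with the completions coming from the first triangle, this should yield at least three distinct perfect matchings containing $g$.

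I would check this count by hand in each symmetry case, exhibiting the three matchings as in Lemmas \ref{C-3+} and \ref{K-G}. I am least sure of this final count: if the bijection really does preserve every count, the three matchings may not exist. In that case the argument would hinge on exactly how the multiple edges of $C_4^2$ are placed (Figure \ref{T6}), and I would re-examine that configuration first.
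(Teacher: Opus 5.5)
\textbf{There is a genuine gap, and it lies in your key tool.} A perfect matching of $F(u)\odot K_4$ meets the triangle cut in one \emph{or three} edges. When $u$ has three distinct neighbours, the three-edge matchings correspond exactly to perfect matchings of $F-u-N(u)$.

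Your claimed tightness, and with it your bijection, does hold for the first replacement $C_4^2\to C_{\bigtriangleup}^1$, because every vertex of $C_4^2$ has only two distinct neighbours. It fails for the second replacement. There $F=C_{\bigtriangleup}^1$ is not bipartite, and for every admissible $u$ the graph $F-u-N(u)$ has a perfect matching (see below).

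If your bijection held at both steps, every edge of $G'$ would lie in one or two perfect matchings, so you would have proved the negation of the lemma. The tension you flag at the end is therefore real. It is resolved by dropping the bijection, not by re-examining how the multiple edges of $C_4^2$ sit.

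Your proposed source of extra matchings is also unavailable. The only multiple edges of $C_{\bigtriangleup}^1$ are $e,f$ joining $v_1$ and $v_2$, and by hypothesis the replaced vertex is not an end of them. So the second replacement splits no parallel bundle. In any case, by your own correspondence, splitting a bundle across a tight cut would change no count.

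\textbf{The missing idea is to count the three-edge matchings.} Extension by the appropriate triangle edge is only an injection from perfect matchings of $F$ into those of $G'$; its image is the set of matchings meeting the new cut once. Let $w_k$ be the end of $e_k$ other than $u$. Then the new cut edge $t_kw_k$ lies in $m_F(e_k)+\Phi(F-u-N(u))$ perfect matchings, where $m_F(e_k)$ is the number of perfect matchings of $F$ containing $e_k$.

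Label $C_{\bigtriangleup}^1$ as in the paper's proof: the triangle $v_1'v_2'v_3'$, the edges $v_1v_1'$, $v_2v_3$, $v_3v_2'$, $v_3v_3'$, and $e,f$. Its five perfect matchings make $v_1'v_2'$ and $v_1'v_3'$ $2$-matching edges. Up to the automorphisms ($v_1\leftrightarrow v_2$, $v_1'\leftrightarrow v_3$) and ($v_2'\leftrightarrow v_3'$) there are two cases.
\begin{itemize}
\item For $u=v_1'$, the graph $F-u-N(u)$ is $\{v_2,v_3\}$ with the single edge $v_2v_3$. So $v_2'v_2''$ lies in $2+1=3$ perfect matchings.
\item For $u=v_3'$, the graph $F-u-N(u)$ is $\{v_1,v_2\}$ joined by $e$ and $f$. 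So $v_1'v_1''$ lies in $2+2=4$ perfect matchings.
\end{itemize}

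This is exactly what the paper's explicit lists exhibit. In each case two of the three matchings extend perfect matchings of $C_{\bigtriangleup}^1$, and the third uses all three edges of the new triangle cut. That third kind of matching is precisely what your framework excludes.
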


\begin{figure}[h!]
  \centering
  \includegraphics[width=5 in]{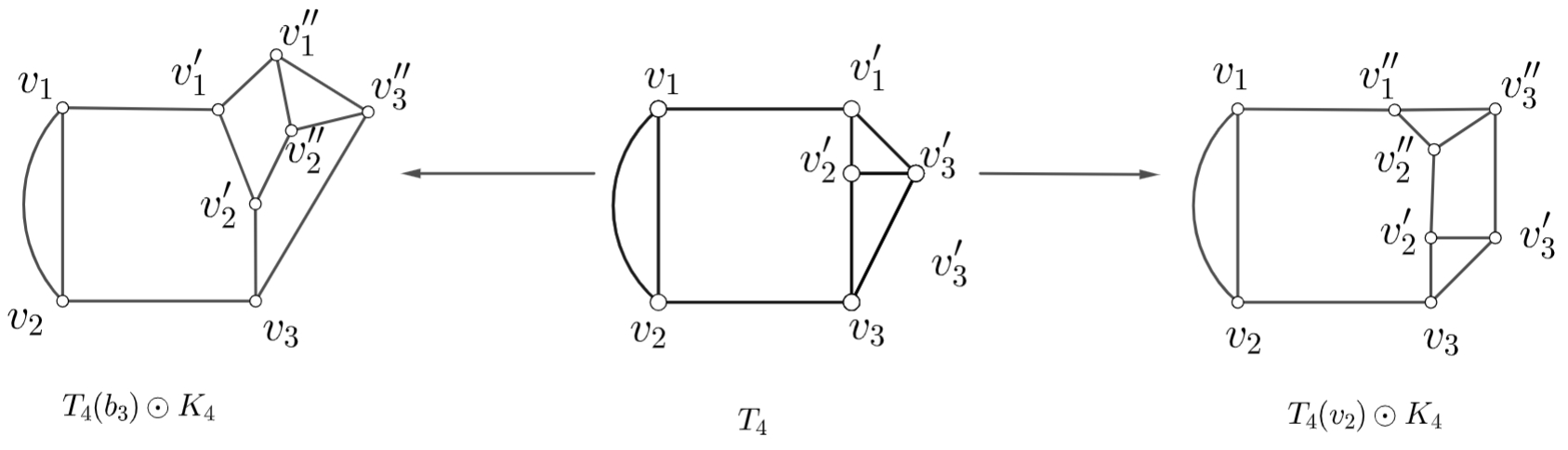}
   \vspace{-0.5cm}
  \caption{Graphs obtained from $C_{\bigtriangleup}^1$ by performing a triangle replacement}\label{T15}
\end{figure}

\begin{proof}
Let $e$ and $f$ be the multiple edges with ends $v_1$ and $v_2$ in $C_{\bigtriangleup}^1$. Let $v_3$, $v_1'$, $v_2'$ and $v_3'$ be the other four vertices, see Figure \ref{T15}. Let $v_1''$, $v_2''$ and $v_3''$ be the three vertices of the triangle replacement in $G'$. By symmetric, $G'$ is isomorphic to $C_{\bigtriangleup}^1(v_1')\odot K_4$ or $C_{\bigtriangleup}^1(v_3')\odot K_4$, see Figure \ref{T15}. If $G'\cong C_{\bigtriangleup}^1(v_1')\odot K_4$, then $\{e, v_2'v_2'', v_3v_3', v_1''v_3''\}$, $\{f, v_2'v_2'', v_3v_3', v_1''v_3''\}$ and $\{v_1v_1'', v_2'v_2'', v_2v_3, v_3'v_3''\}$ are three perfect matchings in $G'$ that contain $v_2'v_2''$. If $G'\cong C_{\bigtriangleup}^1(v_3')\odot K_4$, then $\{e, v_2'v_3, v_1'v_1'', v_2''v_3''\}$, $\{f, v_2'v_3, v_1'v_1'', v_2''v_3''\}$ and $\{e, v_2'v_2'', v_1'v_1'', v_3v_3''\}$ are three perfect matchings in $G'$ that contain $v_1'v_1''$. In both cases, we can obtain that $G'$ contains a $3^+$-matching edge. Hence the result holds.
\end{proof}

\begin{lemma}\label{no-k-tri}
Let $(A,B)$ be the bipartition of $K_{3,3}$, where $A=\{a_1, a_2, a_3\}$, $B=\{b_1, b_2, b_3\}$. Let $T_4=K_{3,3}(a_3)\odot K_4$, and let $v_1$, $v_2$ and $v_3$ be the vertices in the triangle replacement of $T_4$. If $G''$ is a graph obtained from $T_4$ by performing a triangle replacement at a vertex in set $\{v_1, v_2, v_3, b_1, b_2, b_3\}$, then $G''$ contains a $3^+$-matching edge.
 \end{lemma}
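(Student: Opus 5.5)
The plan is to use the tight cut around a replacement triangle together with one observation. A triangle cut $\nabla(\{w_1,w_2,w_3\})$ is odd of size $3$, but it is \emph{not} tight in the spliced graph. Perfect matchings that respect it (meet it in exactly one edge) reproduce the counts of the two contractions by Lemma \ref{tightcut-i-edge}(1). Any perfect matching that uses all three cut edges is therefore an extra one. So in each case I will exhibit a perfect matching of $G''$ using all three edges of the new triangle's cut, and check that it contains an edge which already lies in two cut-respecting perfect matchings.

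First I fix the structure of $T_4$. It has vertices $a_1,a_2,b_1,b_2,b_3,v_1,v_2,v_3$, where $v_1v_2v_3$ is a triangle and $v_i$ is adjacent to $b_i$. Using Lemma \ref{tightcut-i-edge}(1) on the triangle cut (every edge of $K_{3,3}$ is a $2$-matching edge and every edge of $K_4$ a $1$-matching edge), $T_4$ is matching double covered. The permutation of the indices $\{1,2,3\}$ acting simultaneously on the $b_i$ and the $v_i$ is an automorphism. Hence it suffices to treat a triangle replacement at $b_1$ and one at $v_1$.

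\emph{Replacement at $b_1$.} Let $b_1$ become a triangle $w_1w_2w_3$ with $w_1a_1,\ w_2a_2,\ w_3v_1$ as cut edges. Take
\[
\{w_1a_1,\ w_2a_2,\ w_3v_1,\ v_2b_2,\ v_3b_3\}.
\]
This is a perfect matching of $G''$ meeting the cut $\nabla(\{w_1,w_2,w_3\})$ in three edges. The edge $v_2b_2$ is not in this cut, so by the counting argument it lies in exactly as many cut-respecting perfect matchings as it does in $T_4$, namely two. It is therefore in at least three perfect matchings of $G''$.

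\emph{Replacement at $v_1$.} Let $v_1$ become a triangle $u_1u_2u_3$ with $u_1v_2,\ u_2v_3,\ u_3b_1$ as cut edges. The sets
\[
\{u_1v_2,\ u_2v_3,\ u_3b_1,\ a_1b_2,\ a_2b_3\}\quad\text{and}\quad\{u_1v_2,\ u_2v_3,\ u_3b_1,\ a_1b_3,\ a_2b_2\}
\]
are two perfect matchings each using all three cut edges. The cut edge $u_3b_1$ is a $2$-matching edge of $T_4$ (as the edge at the contracted vertex) and a $1$-matching edge of $K_4$. By Lemma \ref{tightcut-i-edge}(1) it therefore lies in two cut-respecting perfect matchings, hence in at least four perfect matchings of $G''$.

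The main obstacle is justifying that non-cut edges keep their count over cut-respecting matchings, so that the extra matchings genuinely push past two. I will do this directly. A cut-respecting perfect matching of $G''$ is a perfect matching $M$ of $T_4$ together with the unique $K_4$-completion determined by $M\cap\nabla$. So an edge away from the triangle has the same count as in $T_4$, while a cut edge gets the product count. The rest is a routine check that the displayed edge sets are perfect matchings.
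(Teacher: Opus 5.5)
Your proof is correct and follows essentially the same route as the paper: reduce by the symmetry of $T_4$ to a replacement at a vertex of $B$ and at a triangle vertex, then in each case exhibit at least three perfect matchings through a single edge. The only difference is bookkeeping. The paper lists all three matchings explicitly, and its third matching in each case is exactly one of your matchings meeting the new triangle cut in three edges; you instead obtain the other two from the product count for matchings meeting that cut in one edge. Your direct justification of that count is the right one, since Lemma~\ref{tightcut-i-edge}(1) is stated only for tight cuts and the new triangle cut is not tight in $G''$.
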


\begin{figure}[h!]
  \centering
  \includegraphics[width=5 in]{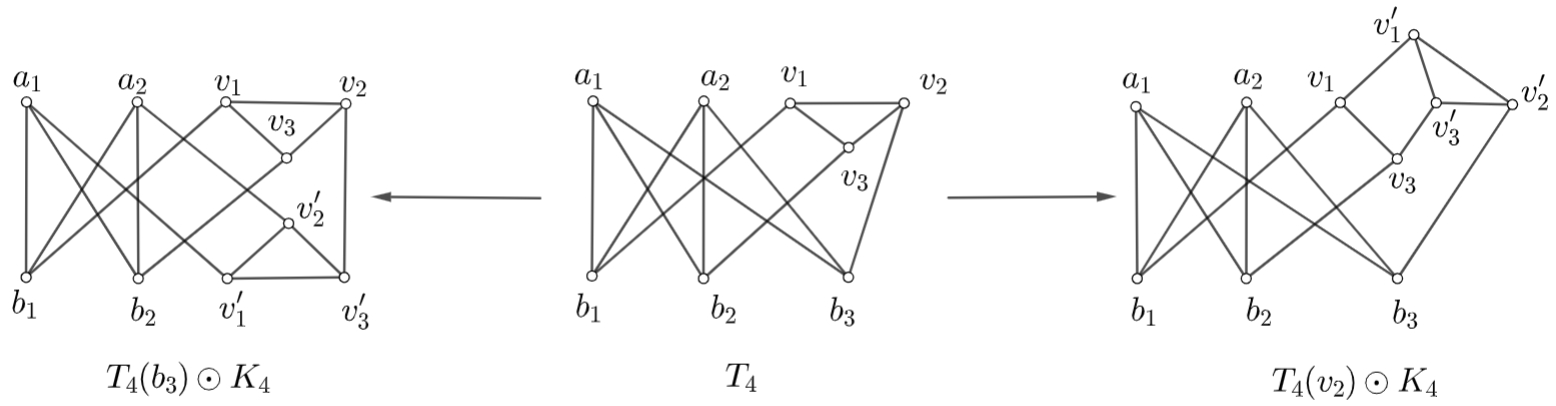}
   \vspace{-0.5cm}
  \caption{Performing a triangle replacement in $T_{4}$}\label{fig14}
\end{figure}

\begin{proof}
Let $v_1'$, $v_2'$ and $v_3'$ be the vertices in the triangle replacement of $G''$. By symmetric, $G''$ is isomorphic to $T_4(v_2)\odot K_4$ or $T_4(b_3)\odot K_4$, see Figure \ref{fig14}. If $G''\cong T_4(v_2)\odot K_4$, then $\{v_1v_1', v_2'v_3', b_2v_3, a_1b_3, a_2b_1\}$, $\{v_1v_1', v_2'v_3', b_2v_3, a_2b_3, a_1b_1\}$ and $\{a_1b_1, a_2b_2, v_1v_1', v_3v_3', b_3v_2'\}$ are three perfect matchings in $G''$ that contain $v_1v_1'$. Hence $v_1v_1'$ is a $3^+$-matching edge in $G''$. If $G''\cong T_4(b_3)\odot K_4$, then $\{a_1b_1, a_2b_2, v_1v_3, v_2v_3', v_1'v_2'\}$, $\{a_1b_2, a_2b_1, v_1v_3, v_2v_3', v_1'v_2'\}$ and $\{a_1v_1', a_2v_2', v_2v_3', b_1v_1, b_2v_3\}$ are three perfect matchings in $G''$ that contain $v_2v_3'$. So $v_2v_3'$ is a $3^+$-matching edge in $G''$.
\end{proof}

Through the same analysis in Lemma \ref{no-k-tri}, we can achieve the following result.

\begin{lemma}\label{no-p-tri}
The graph obtained from the Petersen graph by performing a triangle replacement at a vertex contains a $3^+$-matching edge.
\end{lemma}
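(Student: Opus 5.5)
Since the Petersen graph $P$ is vertex-transitive, it suffices to carry out the triangle replacement at one fixed vertex $u$. Let $G'=P(u)\odot K_4$. Label the neighbours of $u$ in $P$ by $u_1,u_2,u_3$, and let $w_1,w_2,w_3$ be the new triangle vertices, with $w_iu_i\in E(G')$. Following the proof of Lemma \ref{no-k-tri}, the plan is to exhibit three distinct perfect matchings of $G'$ that share a common edge.

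\textbf{Step 1: lift perfect matchings of $P$.} Each perfect matching $M$ of $P$ uses exactly one edge at $u$, say $uu_i$. Replace $uu_i$ by $w_iu_i$ and add the triangle edge $w_jw_k$, where $\{j,k\}=\{1,2,3\}-\{i\}$. This gives a perfect matching $\widehat M$ of $G'$, and distinct $M$ give distinct $\widehat M$. Since $P$ is matching double covered, an edge $g$ of $P-u$ lies in exactly two perfect matchings of $P$. So every such edge $g$ already lies in two lifted matchings of $G'$.

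\textbf{Step 2: find a third matching.} Any third perfect matching containing $g$ cannot be a lift, so it must meet the cut $\nabla(\{w_1,w_2,w_3\})$ in three edges. Such a matching has the form $\{w_1u_1,w_2u_2,w_3u_3\}\cup N$, where $N$ is a perfect matching of $P-\{u,u_1,u_2,u_3\}=P-N[u]$. The graph $P-N[u]$ is the $6$-cycle formed by the vertices at distance two from $u$. It has two perfect matchings $N_1,N_2$, which partition its six edges. Take $g$ to be any edge of this $6$-cycle, say $g\in N_1$. Then $\{w_1u_1,w_2u_2,w_3u_3\}\cup N_1$ is a third perfect matching of $G'$ containing $g$, and it differs from both lifts. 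Hence $g$ is a $3^+$-matching edge of $G'$.

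\textbf{Main obstacle.} The only points to check are that $P-N[u]$ really is a $6$-cycle and that $g\in E(P-u)$, so that Step 1 applies. Both follow from the standard structure of $P$: it has girth five, and exactly six vertices lie at distance two from $u$, each with two neighbours among themselves. Once these are confirmed, the argument is a direct count with no casework.
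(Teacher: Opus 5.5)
Your proof is correct and follows the approach the paper points to when it refers to Lemma \ref{no-k-tri}: reduce by symmetry to a single vertex, then exhibit three perfect matchings that share an edge. Where the paper gives no details, your argument supplies them, namely two lifts of perfect matchings of $P$ (available because $P$ is matching double covered) plus $\{w_1u_1,w_2u_2,w_3u_3\}\cup N_1$ through the $6$-cycle $P-N[u]$; this puts every edge of that $6$-cycle in three perfect matchings of $G'$.
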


%
%

{\bf Proof of Theorem \ref{3-regular}.} The sufficiency is evident since $C_{\bigtriangleup}^1$, $T_2$, $T_4$, $T_5$ or $T_6$ are nonbipartite matching covered graphs with nontrivial cuts in $\mathfrak{B}$. We now prove the necessity. Since $G$ is $3$-regular in $\mathfrak{B}$, every graph produced in the process of tight cuts decomposition of $G$ is also $3$-regular in $\mathfrak{B}$ by Lemmas \ref{B-regular} and \ref{tight-cut&B}. Let $G'$ be the matching covered graph obtained in the process of tight cuts decomposition of $G$ such that $G'$ has a nontrivial tight cut $C$, and the two $C$-contractions $H_1$ and $H_2$ are either bricks or braces. Thus $G'\in \mathfrak{B}$.  Since $C$ is a nontrivial tight cut, $H_1$ and $H_2$ can not be $K_{2}^*$.  Let $\mathfrak{B}_{1}$ be the set of $K_{4}$ and  the Petersen graph (see Figure \ref{T14}). Denote by $\mathfrak{B}_{2}$ the set of $C_{4}^2$ and $K_{3,3}$ (see Figure \ref{T6}). Then by Lemmas \ref{brace-B} and \ref{brick-B}, $H_1$ and $H_2$ are in $\mathfrak{B}_{1}$ or $\mathfrak{B}_{2}$. Since $K_4$ is the only graph that there exists a vertex such that the number of $2$-matching edges incident with it is at most half, we can achieve the following important claim by Lemma \ref{C-DMC}, which used in the remaining part of this proof.

{\bf Claim 1.} Let $H$ be the matching covered graph obtained in the process of tight cuts decomposition of $G$ such that $H$ has a nontrivial tight cut $C$, $H_1$ and $H_2$ are the two $C$-contractions of $H$. If the numbers of $2$-matching edges incident with vertices in $H_1$ are more than half, then $H_2$ is $K_4$.

Now we discuss three cases according to the distribution of $H_1$ and $H_2$.

{\bf Case 1.} $H_1, H_2\in \mathfrak{B}_{2}$. One can check that the numbers of $2$-matching edges incident with vertices in $C_4^2$ and $K_{3,3}$ are more than half. Hence by Lemma \ref{C-DMC}, $G'\notin \mathfrak{B}$, a contradiction.

{\bf Case 2.} $H_1, H_2\in \mathfrak{B}_{1}$. Note that the numbers of $2$-matching edges incident with vertices in the Petersen graph are more than half. Then by Lemma \ref{C-DMC}, at least one of $H_1$ and $H_2$, say $H_1$, is not the Petersen graph. Otherwise, $G'\notin \mathfrak{B}$, a contradiction. Thus $H_1$ is $K_4$ and $|C|=3$. The graph $H_2$ is isomorphic to $K_4$ or the Petersen graph. When $H_2$ is $K_{4}$, then $G'$ is $\overline{C_{6}}$. However, $\overline{C_{6}}$ is not in $\mathfrak{B}$, a contradiction. When $H_2$ is the Petersen graph, then by Lemma \ref{no-p-tri}, $G'$ contains a $3^+$-matching, so $G'$ is not in $\mathfrak{B}$, a contradiction.

{\bf Case 3.}  $H_{1}\in \mathfrak{B}_1$, $H_{2}\in \mathfrak{B}_2$. Then $H_2$ is isomorphic to $C_4^2$ or $K_{3,3}$, and the numbers of $2$-matching edges incident with vertices in $H_2$ are more than half. Then by Claim 1, we have $H_1$ is $K_4$.

{\bf Case 3.1.} If $H_2$ is $C_4^2$, then $G'$ is $C_{\bigtriangleup}^1$ (see Figure \ref{T15}) and is in $\mathfrak{B}$. Then $G$ may be $C_{\bigtriangleup}^1$. When $G\ncong C_{\bigtriangleup}^1$. Let $G''$ be the matching covered graph obtained in the process of tight cuts decomposition of $G$ such that $G''$ has a nontrivial tight cut $C_1$ and $C_{\bigtriangleup}^1$ is one of the two $C_1$-contractions of $G''$. Since the numbers of $2$-matching edges incident with vertices in $C_{\bigtriangleup}^1$ are more than half, the other $C_1$-contraction must be $K_4$ by Claim 1.  Then the graph $G''$ can be viewed as obtained from $C_{\bigtriangleup}^1$ by performing a triangle replacement. Applying for Lemma \ref{no-c-tri}, $G''$ can be obtained from $C_{\bigtriangleup}^1$ by performing a triangle replacement at a vertex that is the end of the multiple edges, say $v_1$. One can check that $G''=C_{\bigtriangleup}^1(v_1)\odot K_4$ and is in $\mathfrak{B}$. Hence $G$ may be $C_{\bigtriangleup}^1(v_1)\odot K_4$. When $G\ncong C_{\bigtriangleup}^1(v_1)\odot K_4$. Let $G'''$ be the matching covered graph obtained in the process of tight cuts decomposition of $G$ such that $G'''$ has a nontrivial tight cut $C_2$, and $C_{\bigtriangleup}^1(v_1)\odot K_4$ is one of the two $C_2$-contractions of $G'''$. Since the numbers of $2$-matching edges incident with vertices in $C_{\bigtriangleup}^1(v_1)\odot K_4$ are more than half, the other $C_2$-contraction must be $K_4$ by Claim 1. Then the graph $G'''$ can be viewed as obtained from $C_{\bigtriangleup}^1(v_1)\odot K_4$ by performing a triangle replacement. However, it will always generate a $3^+$-matching edge in $G'''$ no matter which vertex is performed a triangle replacement on $C_{\bigtriangleup}^1(v_1)\odot K_4$, a contradiction.

{\bf Case 3.2.} If $H_2$ is $K_{3,3}$, then $G'$ is $T_4$ (see Figure \ref{fig14}) and is in $\mathfrak{B}$. Hence $G$ may be $T_4$. When $G\neq T_4$. Let $H''$ be the matching covered graph obtained in the process of tight cuts decomposition of $G$ such that $H''$ has a nontrivial tight cut $C_3$, and $T_4$ is one of the two $C_3$-contractions of $H''$. Since the numbers of $2$-matching edges incident with vertices in $T_4$ are more than half, the other $C_2$-contraction must be $K_4$ by Claim 1. Then the graph $G''$ can be viewed as obtained from $T_4$ by performing a triangle replacement. Hence by the same analysis as that in Case 3.1, we have $G$ is $T_5$ or $T_6$.

In words, if $G$ is nonbipartite $3$-regular matching covered graph with a nontrivial cut in $\mathfrak{B}$, then $G$ is $C_{\bigtriangleup}^1$, $T_2$, $T_4$, $T_5$ or $T_6$. The proof of Theorem \ref{3-regular} is thus completed.

Applying for the same method of analysis, we give the proof of Theorem \ref{4-regular}.

{\bf Proof of Theorem \ref{4-regular}.} The sufficiency is evident since $T_3$ is nonbipartite matching covered graphs with nontrivial cuts in $\mathfrak{B}$. We now prove the necessity. Since $G$ is $4$-regular in $\mathfrak{B}$, every graph produced in the process of tight cuts decomposition of $G$ is also $4$-regular in $\mathfrak{B}$ by Lemmas \ref{B-regular} and \ref{tight-cut&B}. Let $G'$ be the matching covered graph obtained in the process of tight cuts decomposition of $G$ such that $G'$ has a nontrivial tight cut $C$, and the two $C$-contractions $H_1$ and $H_2$ are either bricks or braces. Thus $G'\in \mathfrak{B}$.  Since $C$ is a nontrivial tight cut, $H_1$ and $H_2$ can not be $K_{2}^*$.  Let $\mathfrak{B}_{1}$ be the set of $K_{4}^2$ and $T_1$ (see Figure \ref{T14}). Denote by $\mathfrak{B}_{2}$ the set of $C_{4}^4$ (see Figure \ref{T6}). Then by Lemmas \ref{brace-B} and \ref{brick-B}, $H_1$ and $H_2$ are in $\mathfrak{B}_{1}$ or $\mathfrak{B}_{2}$. Since $K_{4}^2$ is the only graph that there exists a vertex such that the number of $2$-matching edges incident with it is at most half, we can achieve the following claim by Lemma \ref{C-DMC}.

{\bf Claim 2.} Let $H$ be the matching covered graph obtained in the process of tight cuts decomposition of $G$ such that $H$ has a nontrivial tight cut $C$, $H_1$ and $H_2$ are the two $C$-contractions of $H$. If the numbers of $2$-matching edges incident with vertices in $H_1$ are more than half, then $H_2$ is $K_{4}^2$.

Note that $C_{4}^4$ is the only graph in $\mathfrak{B}_{2}$ and the numbers of $2$-matching edges incident with vertices in $C_{4}^4$ are more than half. Then by Lemma \ref{C-DMC}, $H_1$ and $H_2$ can not be both in $\mathfrak{B}_{2}$. With respect to the distribution of $H_1$ and $H_2$, we will consider the following two cases.

{\bf Case 1.} $H_{1}, H_{2}\in \mathfrak{B}_{1}$. Note that the numbers of $2$-matching edges incident with vertices in $T_1$ are more than half. Then by Claim 2, at least one of $H_1$ and $H_2$, say $H_1$, is not $T_1$. Thus $H_1$ is $K_4^2$ and $|C|=4$. The graph $H_2$ is isomorphic to $K_4^2$ or $T_1$. When $H_2$ is $K_4^2$, then $G'$ is $T_3$ and is in $\mathfrak{B}$. Then $G$ may be $T_3$. When $G\neq T_3$. Let $G''$ be the matching covered graph obtained in the process of tight cuts decomposition of $G$ such that $G''$ has a nontrivial tight cut $C_1$ and $T_3$ is one of the two $C_1$-contractions of $G''$. Note that $T_3$ is matching double covered and so the numbers of $2$-matching edges incident with vertices in $T_3$ are more than half. Hence the other $C_1$-contraction must be $K_4^2$ by Claim 2. Then one can check that $G''$ obtained from $K_4^2$ and $K_4^2$ by reverse process of tight cuts decomposition has $3^+$-matching edges, a contradiction.

When $H_2$ is $T_1$. Since $T_1$ is matching double covered and $H_1$ is not $K_{4}$ or an even cycle, the graph $G'$ is not in $\mathfrak{B}$ by Lemma \ref{C-DMC} (2).  is $T_3$ and is in $\mathfrak{B}$, a contradiction.

{\bf Case 2.} $H_{1}\in \mathfrak{B}_1$, $H_{2}\in \mathfrak{B}_2$. Thus $H_2$ is $C_4^4$. Since $C_4^4$ is matching double covered and the numbers of $2$-matching edges incident with vertices in $C_4^4$ are more than half. Then by Claim 2, we have $H_1$ is $K_4^2$. Thus we can check that $G'$ obtained from $C_4^4$ and $K_4^2$ by reverse process of tight cuts decomposition has $3^+$-matching edges, a contradiction.

In both cases, we obtained that $G$ is $T_3$ if $G$ is a nonbipartite $4$-regular matching covered graph with a nontrivial cut in $\mathfrak{B}$. The proof is thus complete. \hfill $\Box$

\section*{Acknowledgement}

The authors thank the referees for their valuable comments. This work is  supported by the National Natural Science Foundation of China under grant numbers 12171440, 12201574 and 12371361.

\end{document}